\documentclass[smallextended]{svjour3}

\smartqed
\usepackage{amssymb,amsxtra}
\usepackage{dsfont,latexsym}
\usepackage{url}

\usepackage{mathrsfs}
\renewcommand{\mathcal}{\mathscr}

\usepackage{color}
\definecolor{citation}{rgb}{0.2,0.6,0.2}
\definecolor{formula}{rgb}{0.1,0.2,0.6}
\definecolor{url}{rgb}{0,0,0.4}
\usepackage[colorlinks=true,linkcolor=formula,urlcolor=url,citecolor=citation]{hyperref}

\newcommand{\CC}{\mathcal{C}}
\newcommand{\GG}{{\mathcal G}}
\newcommand{\MM}{{\mathcal M}}
\newcommand{\FF}{\mathcal{F}}
\newcommand{\TT}{\mathcal{T}}
\newcommand{\KK}{\mathcal{K}}
\newcommand{\XX}{\mathcal{X}}

\newcommand{\R}{{\mathds R}}
\newcommand{\N}{{\mathds N}}

\newcommand{\dys}{\displaystyle}
\newcommand{\eps}{\varepsilon}
\newcommand{\aand}{\,{\mbox{ and }}\,}
\newcommand{\st}{\,{\mbox{ s.t. }}\,}

\newlength{\defbaselineskip}
\setlength{\defbaselineskip}{\baselineskip}

\begin{document}
           
\title
{Local and global minimizers for a
variational energy  involving a fractional norm\thanks{The first author has been supported by Istituto Nazionale di Alta Matematica ``F.~\mbox{Severi}'' (Indam) 
 and by ERC grant 207573 ``Vectorial problems''.
The second author has been partially supported by National Science Foundation (NSF) grant 0701037.
The third author has been partially supported by FIRB ``Project Analysis and Beyond''.
The second and the third authors have been supported by ERC grant ``$\epsilon$ Elliptic Pde's and Symmetry of Interfaces and Layers for Odd Nonlinearities''.
}
}

\author{Giampiero Palatucci \and Ovidiu Savin \and Enrico Valdinoci
}

\institute{G. Palatucci \at
Dipartimento di Matematica \hfill Dipartimento di Matematica\\
Universit\`a degli Studi di Parma \hfill Universit\`a di Roma ``Tor Vergata''\\
Parco Area delle Scienze, 53/A \hfill Via della Ricerca Scientifica, 1\\
43124 Parma, Italy \hfill 00133 Roma, Italy \\
\email{giampiero.palatucci@unimes.fr}
\and O. Savin \at
Department of Mathematics,  
Columbia University \\
2990 Broadway \\
New York, NY 10027, USA\\
\email{savin@math.columbia.edu}
\and E. Valdinoci \at
Dipartimento di Matematica,  
Universit\`a degli Studi di Milano \\
Via Saldini, 50\\ 
20133 Milano, Italy\\
\email{enrico@math.utexas.edu}
}

\date{Received: date / Accepted: date}

\maketitle

\begin{abstract}
We study existence, uniqueness and other geometric
properties of the 
minimizers of the energy functional
$$
\|u\|^2_{H^s(\Omega)}+\int_\Omega W(u)\,dx,
$$
where $\|u\|_{H^s(\Omega)}$ denotes the total contribution from $\Omega$ in the $H^s$ norm of $u$ and $W$ is a double-well potential.
We also deal with the solutions of the related fractional elliptic Allen-Cahn equation on the entire space $\mathbb{R}^n$.

The results collected here will also be useful for forthcoming papers, where the second and the third author will study the $\Gamma$-convergence and the density estimates for level sets of minimizers.

\keywords{Phase transitions \and Nonlocal energy \and Gagliardo norm \and Fractional Laplacian}

\subclass{82B26 \and 49Q20 \and 26A33 \and 49J45}

\end{abstract}

\setcounter{tocdepth}{2}

\tableofcontents

\section{Introduction}

In this paper we study existence, uniqueness, some qualitative properties and related issues for the 
minimizers of a nonlocal energy functional involving a Gagliardo-type norm.

\vspace{1mm}

Let $\Omega\subseteq \R^{n}$ be an open domain and denote by $\CC\Omega$ its complement. We deal with the functional $\FF$ defined by
\begin{equation}\label{energia}
\dys \FF(u,\Omega)=\KK(u,\Omega)+\int_\Omega W(u)\,dx,
\end{equation}
where $\KK(u,\Omega)$ is given by
\begin{equation}\label{kinetic}
\KK(u,\Omega)=\frac{1}{2}\int_\Omega\!\int_\Omega \frac{|u(x)-u(y)|^2}{|x-y|^{n+2s}}\,dx\,dy+
\int_\Omega\!\int_{\CC\Omega} \frac{|u(x)-u(y)|^2}{|x-y|^{n+2s}}\,dx\,dy,
\end{equation}
with $s\in(0,1)$, and the function $W$ is a smooth double-well potential with wells at $+1$ and $-1$; i.e., $W$ is a non-negative function vanishing only at $\{-1,+1\}$.

The functional in \eqref{energia} is a non-scaled Allen-Cahn-Ginzburg-Landau-type energy with its kinetic term $\KK$ given by some nonlocal fractional integrals, in place of the classical Dirichlet integral. 
The energy $\KK(u,\Omega)$ of a function $u$, with prescribed boundary data outside $\Omega$,
can be view as the contribution in $\Omega$ of the 
(squared) $H^s$~(semi)norm of~$u$
$$
\int_{\R^n}\!\int_{\R^n}\frac{|u(x)-u(y)|^2}{|x-y|^{n+2s}}\,dx\, dy.
$$
Nonlocal models involving the $H^s$ norm are quite important in physics, since they naturally arise from many problems that exhibit long range interactions among particles.

In the specific case in (\ref{energia}) with the potential $W$ given by a double-well function, an adequate scaling of the kinetic term $\KK$ brings to the energy for a liquid-liquid two-phase transition model of nonlocal type. A $\Gamma$-convergence theory for such energy has been recently developed by two of the authors in~\cite{sv1}. They show that suitable scalings of the functional~$\FF$ $\Gamma$-converge to the standard minimal surface functional when $s\in[1/2,1)$ and to the nonlocal one when $s \in (0,1/2)$. As in the classical case with the singular perturbation given by the Dirichlet energy, the functional in \eqref{energia} is strictly related to the elliptic Allen-Cahn equation, which is of nonlocal character in this framework.

The nonlocal analogue of the Allen-Cahn equation is given by the 
following Euler-Lagrange equation for the energy $\FF(u):=\FF(u, \R^n)$
\begin{equation}\label{equazione}
\dys (-\Delta)^s u(x) + W'(u(x)) = 0 \ \ \text{for any} \ x \in \R^n,
\end{equation}
As usual, for any $s\in (0,1)$, $(-\Delta)^s$ denotes the $s$-power of the Laplacian operator and, omitting a multiplicative constant $c=c(n,s)$, we have
$$
\dys (-\Delta)^su(x)\, =\, P.V.\int_{\R^n}\frac{u(x)-u(y)}{|x-y|^{n+2s}}\,dy \, = \, \lim_{\eps\to 0} \int_{\CC B_\eps(x)}\frac{u(x)-u(y)}{|x-y|^{n+2s}}\,dy.
$$
Here $B_\eps(x)$ denotes the $n$-dimensional ball of radius~$\eps$, centered at~$x\in\R^n$ (and the standard notation $B_\eps=B_\eps(0)$ will be also used). ``$P.V.$'' is a commonly used abbreviation for ``in the principal value sense''. In the sequel, we will often omit the $P.V.$ notation
in front of the integrals, for simplicity of notation.

\vspace{2mm}

In the same spirit of a celebrate De Giorgi conjecture about the level sets of the solutions of the elliptic analogue of \eqref{equazione},
it seems natural 
to study the solutions $u$ of \eqref{equazione} that satisfy the 
following two conditions:
\begin{equation}\label{derivata}
\partial_{x_n}u(x)>0 \   \ \ \text{for any} \ x\in \R^n
\end{equation}
and, possibly,
\begin{equation}\label{limiti}
\lim_{x_n\rightarrow\pm\infty}u(x',x_n)=\pm1, \ \ \text{for any} \ x'\in\R^{n-1}.
\end{equation}
We refer to \cite{cabre,SV09,SV09b,FV11,cabre2010,cabre2011,cabre21th} 
for several results in this direction.
Here, by means of a technical variation of the classical sliding method, we can prove that the solutions
of the fractional elliptic Allen-Cahn
equation~(\ref{equazione}) that enjoy~\eqref{derivata}
and~\eqref{limiti} have also to satisfy a minimizing 
property for the 
functional~$\FF$ defined in \eqref{energia}, as stated here below:

\begin{theorem}\label{pro_min}
Let~$s\in (0,1)$ and let $u\in C^1(\R^n)$ be a solution of
\begin{equation}\label{eq56}
-(-\Delta)^s u(x)=W'(u(x)), \ \ \text{for any} \ x\in \R^n.
\end{equation}
 Suppose that
\begin{equation}\label{eq_strict}
\partial_{x_n} u(x)>0, \ \ \text{for any} \ x\in\R^n
\end{equation}
and
\begin{equation}\label{lim56}
\lim_{x_n\rightarrow\pm\infty}u(x',x_n)=\pm1, \ \ \text{for any} \ x'\in\R^{n-1}.
\end{equation}
Then,
for any~$r>0$, we have that
\begin{equation}\label{def_locmin}
\dys \FF(u,B_r)\le\FF(u+\phi,B_r) \ \ \text{for any measurable} \ \phi \ \text{supported in} \ B_r.
\end{equation}
\end{theorem}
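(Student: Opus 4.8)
The natural plan is to compare $u$ with a minimizer of the energy on $B_r$ sharing the same exterior datum, and to prove the two coincide by exploiting the monotonicity \eqref{eq_strict} through a sliding argument. First I would obtain, by the direct method, a minimizer $\bar u$ of $\FF(\cdot,B_r)$ in the class $\mathcal{A}_r:=\{\, w:\ w=u \text{ a.e. on } \CC B_r \text{ and } \KK(w,B_r)<\infty\,\}$: this class is non-empty since $u\in C^1$ with $|u|\le 1$ gives $\KK(u,B_r)<\infty$; lower semicontinuity of $\KK(\cdot,B_r)$ along sequences converging a.e.\ is Fatou's lemma (the integrand is non-negative), continuity of $w\mapsto\int_{B_r}W(w)$ is dominated convergence, and compactness of a minimizing sequence follows from the compact embedding of $H^s(B_r)$ into $L^2(B_r)$ after truncating between $-1$ and $+1$ (truncation does not raise $\FF$, as it lowers the Gagliardo term and $W\ge 0 = W(\pm 1)$). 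Thus we may take $|\bar u|\le 1$. Since any admissible $\phi$ in \eqref{def_locmin} makes $u+\phi$ either of infinite energy on $B_r$ (and then \eqref{def_locmin} is trivial) or a member of $\mathcal{A}_r$, it suffices to prove $u\equiv\bar u$; then \eqref{def_locmin} is just the minimality of $\bar u$.

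To prove $u\equiv\bar u$, write $u_t(x):=u(x',x_n+t)$ and slide. I would show $u_t\ge\bar u$ in $\R^n$ for every $t\ge 0$ and, by the symmetric downward sliding (using $\lim_{x_n\to-\infty}u=-1$), $u_t\le\bar u$ for every $t\le 0$; letting $t\to 0$ yields $u=\bar u$. Set $S:=\{\,t\ge 0:\ u_t\ge\bar u \text{ in }\R^n\,\}$. On $\CC B_r$ one has $u_t\ge u=\bar u$ for all $t\ge 0$, with strict inequality for $t>0$ by \eqref{eq_strict}, so the constraint defining $S$ effectively concerns only the compact set $\overline{B_r}$. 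The set $S$ is non-empty: \eqref{lim56}, \eqref{eq_strict} and Dini's theorem give $u(x',x_n)\to 1$ uniformly for $x'$ in compacts as $x_n\to+\infty$, while by the first variation $\bar u$ solves $(-\Delta)^s\bar u=-W'(\bar u)$ in $B_r$, hence is smooth there (bootstrapping, $W$ smooth, $|\bar u|\le 1$) and, by the strong maximum principle, satisfies $\bar u<1$ in $B_r$; since $\bar u=u<1$ on $\CC B_r$ and $\bar u$ is continuous up to $\partial B_r$ (boundary regularity on the smooth domain $B_r$ with smooth exterior datum), $\sup_{\overline{B_r}}\bar u<1$, so $u_t\ge\bar u$ on $\overline{B_r}$ once $t$ is large. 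The set $S$ is closed. Finally, if $t_*:=\inf S$ were positive, I would upgrade $u_{t_*}\ge\bar u$ to a strict inequality with a uniform gap on $\overline{B_r}$: on $\partial B_r$ this is \eqref{eq_strict} with $t=t_*>0$; at an interior contact point $x_0\in B_r$ the function $h:=u_{t_*}-\bar u\ge 0$ vanishes, so, since $u_{t_*}$ and $\bar u$ solve the same equation and $u_{t_*}(x_0)=\bar u(x_0)$, one gets $(-\Delta)^s h(x_0)=-W'(u_{t_*}(x_0))+W'(\bar u(x_0))=0$; but at a minimum point $(-\Delta)^s h(x_0)$ equals $-\int_{\R^n}h(y)\,|x_0-y|^{-n-2s}\,dy$ up to a positive factor, and this is $<0$ because $h\not\equiv 0$ (indeed $h=u_{t_*}-u>0$ on $\CC B_r$) — a contradiction, so there is no interior contact, and by compactness $u_{t_*}\ge\bar u+\delta$ on $\overline{B_r}$ for some $\delta>0$. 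The local uniform continuity of $t\mapsto u_t$ (from $u\in C^1$) then gives $t_*-\eps\in S$ for small $\eps>0$, contradicting $t_*=\inf S$; hence $t_*=0$ and $0\in S$.

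The main obstacle, relative to the classical local case, is running the sliding rigorously in this nonlocal, unbounded framework. The strict ordering $u_t>\bar u$ on $\CC B_r$ degenerates at infinity (both functions tend to $\pm 1$), so one cannot appeal to global compactness; the scheme above circumvents this by confining every quantitative estimate to the compact set $\overline{B_r}$ and observing that, for $t$ near $t_*$, the inequality on $\CC B_r$ is automatic. The price is that one needs enough regularity of the comparison minimizer $\bar u$ — interior smoothness for the strong–maximum–principle/Hopf-type step at interior contact points, and continuity up to $\partial B_r$ to extract the uniform gap — which is supplied by the regularity theory for $(-\Delta)^s$ with bounded data on smooth domains. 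This blending of the classical sliding scheme with the nonlocal comparison principle and fractional regularity is the ``technical variation'' alluded to above.
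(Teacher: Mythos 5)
Your proposal is correct and follows essentially the same strategy as the paper's proof: construct a minimizer $\bar u$ (resp.\ $u_\star$) of $\FF(\cdot,B_r)$ sharing the exterior datum $u$, show $|\bar u|<1$ by the nonlocal strict maximum principle, slide $u$ in the $x_n$-direction, and at a first contact point use the fact that $(-\Delta)^s h(x_0)=-\int_{\R^n} h(y)\,|x_0-y|^{-n-2s}\,dy<0$ whenever $h\ge 0$, $h(x_0)=0$ and $h\not\equiv 0$, to exclude contact and conclude $u=\bar u$. The only cosmetic difference is that the paper runs the argument by contradiction (assuming $u(P)<u_\star(P)$ for some $P$ and deriving a contradiction from the first touching at $\bar k>0$), whereas you phrase it directly via the set $S=\{t\ge 0:\,u_t\ge\bar u\}$; the key lemma (the paper's Lemma~\ref{5656}, your compactness step via truncation and fractional Sobolev compactness) and the touching mechanism are identical.
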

In the literature, \eqref{def_locmin} is generally referred by saying that $u$ is a {\em local minimizer} for~$\FF$ in the domain~$B_r$.

The proof of Theorem~\ref{pro_min} follows a classical sliding argument
(see, e.g., Lemma~9.1
in~\cite{ssv} and also~\cite{alberti2001} and~ \cite{ambrosio} for a 
different 
variational approach for the classical local functional),
but we need here to operate some modifications due to the non-locality of 
the fractional operators $(-\Delta)^s$.

\vspace{2mm}

In the case of $\Omega$ being an open one-dimensional set, we will carefully characterize such class of minimizers, showing that they are monotone increasing and unique up to translations.
Moreover, by further regularity assumptions on the potential $W$, we have that the 1-D~minimizers satisfy certain regularity properties and then we will analyze their asymptotic behavior and the one of their derivative (see Theorem~\ref{theorem_unod} below).
Precisely, we denote by
\begin{equation}\label{def_x}
{\mathcal{X}} = \big\{ f\in L^1_{\textrm{loc}}(\R)\st
\lim_{x\rightarrow\pm\infty} f (x)=\pm 1\big\}
\end{equation}
the space of admissible functions and we suppose that the double-well potential $W$ belongs to $C^{2}(\R)$ and satisfies
\begin{equation}\label{CON6}
W''(\pm1)>0.
\end{equation}
Then, we prove the following theorem.

\begin{theorem}\label{theorem_unod}
Let $\FF$  given by~\eqref{energia}. Then there exists a unique (up to translations) nontrivial global minimizer $u^{(0)}\in\XX$ of the energy $\FF$ which is strictly increasing.
The minimizer $u^{(0)}$ solves the equation~\eqref{equazione} and is unique (up to translations) also in the class of monotone solutions of~\eqref{equazione}.
Moreover, $u^{(0)}$ belongs to~$C^2(\R)$ and there exists a constant~$C\ge 1$ such that
\begin{equation}\label{sss}
|{u^{(0)}} (x)-\,{\rm sign}\,(x)|\le C\,|x|^{-2s}
\ \text{and} \ \
\big|\big({u^{(0)}}\big)' (x)\big|\le C\,|x|^{-(1+2s)}
\end{equation}
for any large $x\in\R$.
\end{theorem}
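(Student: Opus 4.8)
The plan is to combine the direct method of the calculus of variations with a rearrangement inequality (for existence, monotonicity and uniqueness of the minimizer), the Euler--Lagrange equation together with elliptic regularity and the sliding result of Theorem~\ref{pro_min} (for the characterization of monotone solutions), and a comparison argument with explicit barriers (for the decay estimates~\eqref{sss}). Concretely, I would first minimize $\FF(u):=\FF(u,\R)=\frac12\int_\R\!\int_\R\frac{|u(x)-u(y)|^2}{|x-y|^{1+2s}}\,dx\,dy+\int_\R W(u)\,dx$ over the nondecreasing functions $u\in\XX$ with $|u|\le1$ and the normalization $u(0)=0$; this class contains functions of finite energy. A minimizing sequence $u_k$ is nondecreasing and equibounded, so by Helly's theorem a subsequence converges a.e.\ to a nondecreasing $u^{(0)}$, and the lower semicontinuity of the Gagliardo seminorm together with Fatou's lemma for the nonnegative potential give $\FF(u^{(0)})\le\liminf_k\FF(u_k)$. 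No mass escapes to infinity, because $\int_\R W(u_k)\le C$ forces the transition interval $\{|u_k|\le\frac12\}$ --- an interval, by monotonicity, containing $0$ --- to have uniformly bounded length, so $u^{(0)}\le-\frac12$ far to the left and $u^{(0)}\ge\frac12$ far to the right; being nondecreasing with finite potential energy, $u^{(0)}$ must then tend to $\pm1$, i.e.\ $u^{(0)}\in\XX$. Finally, since the monotone rearrangement does not increase the Gagliardo seminorm while preserving the distribution function of $u$ (hence $\int_\R W(u)$ and, after truncation to $[-1,1]$, the limits at $\pm\infty$), one has $\FF(u^{(0)})\le\FF(u^{\ast})\le\FF(u)$ for every finite-energy $u\in\XX$: thus $u^{(0)}$ is a global minimizer of $\FF$ over $\XX$.

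\emph{Equation, strict monotonicity, regularity and uniqueness.} Taking the free first variation of $\FF$ along $\phi\in C^\infty_c(\R)$ shows that $u^{(0)}$ solves $(-\Delta)^su+W'(u)=0$ weakly; since it is bounded, $(-\Delta)^su^{(0)}=-W'(u^{(0)})\in L^\infty(\R)$, and a standard bootstrap with the Schauder estimates for $(-\Delta)^s$, using $W\in C^2$, upgrades $u^{(0)}$ to $C^2(\R)$ and makes the equation classical. Differentiating it, $p:=(u^{(0)})'\ge0$ solves the linearized equation $(-\Delta)^sp+W''(u^{(0)})\,p=0$, so by the strong maximum principle for $(-\Delta)^s$ either $p\equiv0$ --- impossible, since $u^{(0)}$ joins $-1$ to $+1$ --- or $p>0$ everywhere; hence $u^{(0)}$ is strictly increasing. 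If $u\in\XX$ is another finite-energy minimizer, then $\FF(u)=\FF(u^{(0)})=\FF(u^{\ast})$, so equality holds in the rearrangement inequality, which forces $u$ to coincide with its monotone rearrangement up to a translation; thus $u$ is monotone, and the rest of this step identifies it with a translate of $u^{(0)}$. Conversely, let $u$ be any nontrivial monotone solution of~\eqref{equazione}: one checks that $u$ is bounded and that a Liouville-type argument for the linearized equation rules out an intermediate limit, so~\eqref{lim56} holds and, as above, $u$ is strictly monotone; then Theorem~\ref{pro_min} applies and $u$ is a local minimizer of $\FF$ on every ball $B_r$. Writing a finite-energy competitor $w\in\XX$ as $u+\phi$ with $\phi\to0$ at $\pm\infty$, cutting $\phi$ off to zero outside $B_r$ and letting $r\to\infty$ --- the local energies $\FF(\cdot,B_r)$ converge to $\FF(\cdot,\R)$ on finite-energy functions, the cut-off region contributing $o(1)$ because $\phi$ is small there --- gives $\FF(u,\R)\le\FF(w,\R)$; so $u$ is a global minimizer over $\XX$, and hence a translate of $u^{(0)}$.

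\emph{Asymptotics --- the main obstacle.} It suffices to treat $x\to+\infty$ (the case $x\to-\infty$ is analogous, with $W''(-1)$ replacing $W''(1)$). Set $v:=1-u^{(0)}>0$, so that $v\to0$; since $W\in C^2$, $W'(1)=0$ and $c:=W''(1)>0$, the equation reads $(-\Delta)^sv+cv=O(v^2)$ near $+\infty$. For the upper bound in~\eqref{sss} I would compare $v$, on the half-line $\{x>R\}$ with $R=R(s,W)$ large, with $A(1+x^2)^{-s}$: a direct computation gives $(-\Delta)^s\bigl[(1+x^2)^{-s}\bigr](x)=o(|x|^{-2s})$ as $|x|\to\infty$, so the positive zeroth-order term $c(1+x^2)^{-s}$ dominates and $A(1+x^2)^{-s}$ is a supersolution of $(-\Delta)^sw+cw\ge0$ there; choosing $A$ so that this barrier exceeds $v$ (which is $\le2$) on $\{x\le R\}$, the maximum principle for $(-\Delta)^s+c$ yields $v(x)\le C|x|^{-2s}$. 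The matching lower bound $v(x)\ge c'|x|^{-2s}$ is dictated by the far field: in the defining integral of $(-\Delta)^sv(x)$ the region $y\to-\infty$, where $v\approx2$, contributes a term $\asymp-|x|^{-2s}$, and through the equation this pins $v(x)\asymp|x|^{-2s}$ (rigorously, via a subsolution barrier). For the gradient bound I would use the scaling relation $[(-\Delta)^s,x\partial_x]=2s(-\Delta)^s$: the combination $q:=v+\frac1{2s}\,x\,v'$ satisfies $(-\Delta)^sq+W''(1-v)\,q=2W'(1-v)+v\,W''(1-v)=O(v)=O(|x|^{-2s})$ near $+\infty$, an equation of exactly the same type as for $v$, so the same comparison (with a one-sided barrier, since $q$ does not decay as $x\to-\infty$) gives $|q(x)|\le C|x|^{-2s}$, whence $|x\,v'(x)|=2s\,|q(x)-v(x)|\le C|x|^{-2s}$, i.e.\ $|(u^{(0)})'(x)|\le C|x|^{-(1+2s)}$. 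The technical crux throughout is to control the barrier constants so as to reach the \emph{sharp} exponents $-2s$ and $-(1+2s)$ rather than $-2s+\eps$: this is precisely where one uses $W''(\pm1)>0$ and that the comparison takes place only on $\{|x|>R\}$ for $R$ large (and, to make $q$ bounded, a preliminary crude bound $|(u^{(0)})'(x)|\le C_\eps|x|^{-(1+2s)+\eps}$ from a barrier of order $|x|^{-\beta}$ with $\beta<1+2s$).
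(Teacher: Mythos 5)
Your existence and uniqueness arguments rest on minimizing $\FF(u,\R)$ over $\XX$, and this only makes sense when $s\in(1/2,1)$. For $s\in(0,1/2]$ the kinetic term is $+\infty$ for \emph{every} $u\in\XX$: a monotone transition from $-1$ to $+1$ already gives
\begin{equation*}
\frac12\int_\R\!\int_\R\frac{|u(x)-u(y)|^2}{|x-y|^{1+2s}}\,dx\,dy
\;\ge\; \int_{R}^{\infty}\!\!\int_{-\infty}^{-R}\frac{c}{(x-y)^{1+2s}}\,dx\,dy
\;\asymp\;\int_{2R}^{\infty} w^{-2s}\,dw = +\infty,
\end{equation*}
so your direct method, the rearrangement inequality, and the characterization of minimizers via ``equality in the rearrangement inequality'' are all vacuous on that range. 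Note the paper's notion of \emph{global minimizer} is deliberately different from ``minimizer of $\FF(\cdot,\R)$'': it is (cf.\ the sentence before Theorem~\ref{theorem_0956} and the set $\MM$ in~\eqref{def_m}) that $\GG(u^{(0)})<\infty$, where $\GG$ is the rescaled energy in~\eqref{def_g}, \emph{and} $u^{(0)}$ is a local minimizer on every interval. The paper's actual proof instead constructs $u^{(0)}$ as the locally uniform limit of interval minimizers $v_{[-K,K]}$ (Lemma~\ref{lem_EN-1}), which are monotone by Corollary~\ref{lem_momo}, normalized by choosing $p_K$ with $v_{[-K,K]}(p_K)=c_o$ and proving $K-|p_K|\to\infty$ via Corollary~\ref{cor_vmeno}; the limit $u_*$ has $\GG(u_*)<\infty$ by Fatou, satisfies the equation via Lemma~\ref{lem_limiti}, achieves the limits $\pm1$ by a potential-growth contradiction using $W''(\pm1)>0$, and is a local minimizer on every ball by Theorem~\ref{pro_min}. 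Uniqueness is then obtained by a sliding comparison (vertical translation by $\eps$, first contact, limit $\eps\to0^+$ using the convexity~\eqref{grow33} of $W'$ near $\pm1$), not by rearrangement. In fact the paper records your rearrangement/direct-method route only as a \emph{remark} valid for $s\in(1/2,1)$.

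Your treatment of the asymptotics~\eqref{sss} is a genuinely different (and nice) idea: the paper proves the bound on $u^{(0)}$ by sliding the explicit barrier of Lemma~\ref{tau}, and the derivative bound by applying Corollary~\ref{cor_super} (whose barrier is the $\eta\sim|x|^{-(1+2s)}$ of Lemma~\ref{sop}) directly to $u'$, which solves the linearized equation. Your alternative for the derivative, using the commutator identity $[(-\Delta)^s,x\partial_x]=2s(-\Delta)^s$ and comparing $q:=v+\tfrac1{2s}xv'$ with a power barrier, is algebraically correct, but you would have to make rigorous the preliminary boundedness of $q$ (you gesture at a crude bound of order $|x|^{-(1+2s)+\eps}$ but do not supply it) and to justify the one-sided comparison carefully; the paper's route via Corollary~\ref{cor_super} avoids this by comparing $u'$ itself, which is already known to be bounded and $C^{0,\gamma}$.
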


\vspace{2mm}

As a further matter, exploiting Theorem~\ref{theorem_unod}, we will be able to construct a minimizer in higher dimensions $u^\ast$ and we will estimate the energy~\eqref{energia} of $u^\ast$ on the ball $B_R$, proving that, as $R$ gets larger and larger, the contribution in $\KK(u^\ast, B_R)$ from $\CC B_R$ becomes negligible if $s\geq 1/2$, however when $s<1/2$ this does not happen.

Precisely, we consider the functional $\mathcal{G}:\mathcal{X}\to\R\cup\{+\infty\}$ defined as follows
\begin{equation}\label{def_g}
{\mathcal{G}}(u) = \begin{cases}
\dys \liminf_{R\rightarrow +\infty}\, \frac{1}{R^{1-2s}}\FF(u,[-R,R]) & \text{if} \ s\in (0,\, 1/2),\\
\\
\displaystyle \liminf_{R\rightarrow+\infty}\, \frac{1}{\log R}\FF(u,[-R,R])
& {\mbox{ if $s=1/2$,}}\\
\\
\FF(u,\R) & {\mbox{ if $s\in(1/2,\,1)$,}}
\end{cases}
\end{equation}
where, for every $I\in\R$, $\FF(\cdot, I)$ is defined by (\ref{energia}). The functional $\mathcal{G}$ is given by the natural scaling of the energy $\FF$, in the sense that, for any $s\in (0,1)$, we have that $\GG(u^{(0)})$ is finite, where $u^{(0)}$ is the minimizer in~Theorem~\ref{theorem_unod}. In this respect, we say that the function $u^{(0)}$ is a {\em global minimizer} for $\FF$ if $\GG(u^{(0)})$ is finite and $u^{(0)}$ is a local minimizer for $\FF$ in any $B_r$ (see~Section~\ref{sec_minimi}).
Finally, it is worth mentioning that the scaling in~\eqref{def_g} also appears in the $\Gamma$-convergence analysis in~\cite{sv1} (see, in particular, ~Theorem 1.2 and Theorem 1.3 there).

\vspace{1mm}

We extend~$u^{(0)}$ to all the dimensions by setting,
for any~$x\in\R^n$ (and~$n\geq 2$),

\begin{equation}\label{defust}
u^* (x)=u^*(x_1,\dots,x_n):= u^{(0)}(\varpi x_n).
\end{equation}
where $\varpi$ is a constant needed just to keep track of the dependence
of~$(-\Delta)^s$ on the dimension, given by
\begin{equation}\label{varpi} \varpi:= \frac{1}{
\left(
\displaystyle\int_{\R^{n-1}} \displaystyle\frac{d\zeta}{
(1+|\zeta|^2)^{(n+2s)/2}}
\right)^{\!\frac{1}{2s}}
}.\end{equation}
This constant\footnote{
Of course, we could have kept track of the normalization constant~$\varpi$ in the definition of the fractional Laplacian operator (instead of in~\eqref{defust}), so that~\eqref{defust} reduces to the simpler~$u^*(x)=u^{(0)}(x_n)$. However, we preferred this choice both for consistency with~\cite{sv2011,sv0,sv1} and because most of the computations here are not complicated at all by this setting.
} also appears in~\cite{caffarelli} 
and~\cite{ambrosio2010}.

\vspace{2mm}

We prove the following theorem.
\begin{theorem}\label{theorem_0956}
Let $\GG$ be the 1-D functional defined by~\eqref{def_g} and let $u^{*}$ be defined by \eqref{defust}. Then, for any~$r>0$, we have that
\begin{equation}\label{8swqkfhfee}
\FF(u^*,B_r)\le\FF(u^*+\phi,B_r)
\end{equation}
for any measurable~$\phi$ supported in~$B_r$.

Also, the following results hold as $R\to+\infty$.
\begin{itemize}
\item[(i)]{If $s\in (0, 1/2)$, then
$$
c_1 \leq \frac{1}{R^{n-2s}}\int_{B_R}\int_{\CC B_R}\!\!\frac{|u^*(x)-u^*(y)|^2}{|x-y|^{n+2s}}dx\, dy \leq c_2.
$$
}\vspace{1mm}
\item[(ii)]{If $s=1/2$, then
$$
\frac{\FF(u^\ast,B_R)}{R^{n-1}\log R} \to b^\ast \ \ \text{and} \ \ \frac{1}{R^{n-1}\log R}\int_{B_R}\int_{\CC B_R}\!\!\frac{|u^*(x)-u^*(y)|^2}{|x-y|^{n+1}}dx\, dy \to 0.
$$
}\vspace{1mm}
\item[(iii)]{If $s\in (1/2, 1)$, then
$$
\frac{\FF(u^\ast,B_R)}{R^{n-1}} \to b^\ast \ \ \text{and} \ \ \frac{1}{R^{n-1}}\int_{B_R}\int_{\CC B_R}\!\!\frac{|u^*(x)-u^*(y)|^2}{|x-y|^{n+2s}}dx\, dy \to 0,
$$
}\vspace{1mm}
\end{itemize}\vspace{-3mm}
where $c_1$ and $c_2$ are positive constants and $\dys b^\ast=\frac{\omega_{n-1}}{\varpi}\GG(u^{(0)})$.
\vspace{2mm}

Moreover, there exists~$C>0$ such that for any~$R\ge2$
and~$\delta\in (0, 1/2)$ we have
\begin{equation}\label{1-del}
\FF (u^*,B_{R}\setminus B_{(1-\delta)R})\le C \delta R^{n-1}.
\end{equation}
\end{theorem}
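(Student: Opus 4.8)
\emph{Step 1: the variational inequality~\eqref{8swqkfhfee}.}
I would obtain it by applying Theorem~\ref{pro_min} directly to $u^*$, so it suffices to check the hypotheses. To see that $u^*$ solves~\eqref{eq56} on $\R^n$, write $(-\Delta)^s$ as a principal value integral and carry out the $y'$-integration with the change of variable $y'-x'=|x_n-y_n|\,\zeta$; this gives
\[
(-\Delta)^s u^*(x)=\Bigl(\int_{\R^{n-1}}\frac{d\zeta}{(1+|\zeta|^2)^{(n+2s)/2}}\Bigr)\,\varpi^{2s}\,(-\Delta)^s u^{(0)}(\varpi x_n)=(-\Delta)^s u^{(0)}(\varpi x_n),
\]
the last identity being precisely the definition~\eqref{varpi} of $\varpi$, and by Theorem~\ref{theorem_unod} the right-hand side equals $-W'(u^{(0)}(\varpi x_n))=-W'(u^*(x))$. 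Moreover $\partial_{x_n}u^*=\varpi\,(u^{(0)})'(\varpi\,\cdot)>0$ since $u^{(0)}$ is strictly increasing and $\varpi>0$, the limits~\eqref{lim56} are inherited from $u^{(0)}\in\XX$, and $u^*\in C^2(\R^n)\subset C^1(\R^n)$ since $u^{(0)}\in C^2(\R)$. Thus Theorem~\ref{pro_min} applies and gives~\eqref{8swqkfhfee} for every $r>0$. (By the usual localization property of local minimizers this also makes $u^*$ a local minimizer of $\FF$ on every bounded open subset of a ball.)

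\emph{Step 2: slicing the bulk.}
Write $\FF(u^*,B_R)=\int_{B_R}W(u^*)+\tfrac12 I(R)+J(R)$, with $I(R)=\int_{B_R}\int_{B_R}|u^*(x)-u^*(y)|^2|x-y|^{-n-2s}\,dx\,dy$ and $J(R)$ the analogous integral over $B_R\times\CC B_R$; the plan is to evaluate the first two terms by integrating out the variables transverse to $e_n$. The potential term: Fubini in $x'$ together with the integrability of $W(u^{(0)})$ on $\R$ --- a consequence of~\eqref{sss} and $W''(\pm1)>0$, which give $W(u^{(0)})\le C|x|^{-4s}$ with $4s>1$ --- yields $\int_{B_R}W(u^*)=\frac{\omega_{n-1}}{\varpi}R^{n-1}\int_\R W(u^{(0)})+O(R^{n-2})$. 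For $\tfrac12 I(R)$: comparing $B_R$ with the cylinder $B_R^{n-1}\times\R$, integrating $x',y'$ via $\int_{\R^{n-1}}(|z|^2+t^2)^{-(n+2s)/2}dz=\varpi^{-2s}t^{-(1+2s)}$ and rescaling $u^{(0)}(\varpi\,\cdot)$, the transverse factor $\varpi^{-2s}$ and the rescaling factor $\varpi^{2s-1}$ combine into $\varpi^{-1}$ (again the point of~\eqref{varpi}), so that $\tfrac12 I(R)$ equals $\frac{\omega_{n-1}}{\varpi}R^{n-1}$ times the one-dimensional Gagliardo energy $\KK(u^{(0)},(-\varpi R,\varpi R))$, up to a boundary-layer discrepancy that is of lower order by~\eqref{sss}. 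Adding, $\int_{B_R}W(u^*)+\tfrac12 I(R)=\frac{\omega_{n-1}}{\varpi}R^{n-1}\,\FF(u^{(0)},(-\varpi R,\varpi R))\,(1+o(1))$, and the one-dimensional asymptotics of $\FF(u^{(0)},(-T,T))$ --- bounded for $s>1/2$, $\sim(\mathrm{const})\log T$ for $s=1/2$, $\sim(\mathrm{const})T^{1-2s}$ for $s<1/2$, all read off from~\eqref{sss} --- identify this as $b^*R^{n-1}(1+o(1))$ for $s>1/2$ and $b^*R^{n-1}\log R\,(1+o(1))$ for $s=1/2$, with $b^*=\frac{\omega_{n-1}}{\varpi}\GG(u^{(0)})$.

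\emph{Step 3: the cross term, and conclusion of (i)--(iii).}
It remains to estimate $J(R)$, which I would split by the sign pattern of $(x_n,y_n)$ and by whether $x$ is close to $\partial B_R$. The leading piece is the ``interface'' term $\int_{B_R}\int_{\CC B_R}|u^*(x)-u^*(y)|^2|x-y|^{-n-2s}\,\mathbf{1}_{\{x_ny_n<0\}}\,dx\,dy$; after rescaling $x=R\xi$, $y=R\eta$ it equals $R^{n-2s}$ times an integral over $B_1\times\CC B_1$ which is finite (the singularity along $\partial B_1\cap\{x_n=0\}$ is integrable since $2s<2$) and strictly positive (restrict to $|x_n|,|y_n|\ge1$, where $|u^*(x)-u^*(y)|\ge u^{(0)}(\varpi)-u^{(0)}(-\varpi)>0$ by strict monotonicity). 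All remaining pieces are of lower order, using the finer bound $|u^*(x)-u^*(y)|\le\min\{2,\,L|x-y|\}$ for pairs near $\partial B_R$ and the decay~\eqref{sss} away from $\{x_n=0\}$ (the distance $\gtrsim|x_n|$ between points of opposite sign, combined with the algebraic decay, makes the tails summable). Hence $J(R)\asymp R^{n-2s}$, which is exactly~(i) for $s<1/2$; since $R^{n-2s}=o(R^{n-1})$ for $s>1/2$ and $R^{n-2s}=o(R^{n-1}\log R)$ for $s=1/2$, combining with Step~2 gives~(ii) and~(iii), including the vanishing of the normalized cross-interactions.

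\emph{Step 4: the shell estimate~\eqref{1-del}, and the main difficulty.}
Here I would estimate $\FF(u^*,S)$, $S:=B_R\setminus B_{(1-\delta)R}$, contribution by contribution. The part of $S$ within a fixed distance of $\{x_n=0\}$ has volume $\lesssim\delta R^{n-1}$ (radial width $\sim\delta R$ over an $(n-2)$-sphere of radius $\sim R$) and carries bounded potential and kinetic density, hence contributes $\lesssim\delta R^{n-1}$; on the rest of $S$, where $u^*$ is $O(|x_n|^{-2s})$-close to $\pm1$ and $(u^*)'$ is $O(|x_n|^{-(1+2s)})$ by~\eqref{sss}, one bounds the self-interaction and the interaction with $\CC S$ using these rates, together with the Lipschitz bound for pairs close to $\partial B_R$, to see that the remainder is lower order. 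The genuinely delicate part of the whole theorem --- present both here and in the estimate of $J(R)$ in Step~3 --- is the quantitative control of the long-range interaction \emph{across} the hyperplane $\{x_n=0\}$ (between $B_R$ and $\CC B_R$, respectively between $S$ and its complement), where $|u^*(x)-u^*(y)|$ is not small: one must match the exact decay exponents $-2s$ and $-(1+2s)$ of~\eqref{sss} against the singular kernel $|x-y|^{-(n+2s)}$ finely enough to render all the tail integrals convergent, and this bookkeeping is where the bulk of the work lies.
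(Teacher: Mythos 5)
Your Step~1 is exactly the paper's argument: verify that $u^*$ solves the nonlocal Allen--Cahn equation by the change of variables that absorbs $\varpi$, then invoke Theorem~\ref{pro_min}.

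Steps~2--3 follow the same general plan as the paper. The paper implements your ``slicing the bulk'' via an explicit change of variables ($z'=(y'-x')/|y_n-x_n|$, $z_n=\varpi y_n$, $t=\varpi x_n$, $\rho=x'/R$), which rewrites $\int_{B_R}\int_{\R^n}$ as $\frac{\omega_{n-1}}{\varpi}\int_{-\varpi R}^{\varpi R}[\cdots]\,dt$ minus explicit error terms $\theta_j(R)$, and it controls those errors through the tailored auxiliary Lemma~\ref{lem_aux} together with the bound $\int_{B_R}\int_{\CC B_R}\lesssim R^{n-2s}$ from Lemma~\ref{lip_lem-0} and \eqref{sss}. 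Your sketch has the same skeleton but is thin on the quantitative control of the boundary-layer discrepancy --- that control is precisely why Lemma~\ref{lem_aux} was built (note, incidentally, that this lemma only delivers a $\liminf$, an issue the paper itself inherits relative to the claimed limits in (ii)--(iii)). Your lower bound in~(i), restricting to pairs $(x,y)$ with $x_n,y_n$ of opposite sign bounded away from zero after rescaling, is actually more careful than the paper's one-line claim $\int_{B_R}\int_{\CC B_R}\gtrsim\int_{B_{R/2}}\int_{\CC B_{2R}}$, which silently uses the same restriction.

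Step~4 is where you genuinely diverge from the paper, and where a real problem lurks. The paper handles \eqref{1-del} by noting $\int_{\R^n}|u^*(x)-u^*(y)|^2|x-y|^{-n-2s}\,dy\le C_9$ uniformly (Lemma~\ref{lip_lem-0} with $\rho=1$), hence $\FF(u^*,S)\le (C_9+\sup W)\,|S|$. But $|B_R\setminus B_{(1-\delta)R}|\sim\delta R^{n}$, not $\delta R^{n-1}$: the paper's argument only yields $\FF(u^*,S)\lesssim\delta R^n$, one full power of $R$ weaker than stated. Your $x_n$-slicing is the correct way to sharpen this, and for $s>1/2$ it does produce $\delta R^{n-1}$, since the tail $\int_1^R t^{-2s}\,dt$ converges. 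However, your assertion that ``the remainder is lower order'' fails for $s\le 1/2$: the slicing bound is $\int_S\min(1,|x_n|^{-2s})\,dx\lesssim\delta R^{n-1}\int_1^R t^{-2s}\,dt$, which is $\delta R^{n-1}\log R$ at $s=1/2$ and $\delta R^{n-2s}$ for $s<1/2$, each exceeding $\delta R^{n-1}$ for large $R$. And this is not merely a lossy upper bound: restricting to $x,y\in S$ with $x_n\approx +cR$, $y_n\approx -cR$ gives $\tfrac12\int_S\int_S\gtrsim\delta^2R^{n-2s}$ for $s<1/2$, which already beats $\delta R^{n-1}$ for large $R$ at fixed $\delta$. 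Thus \eqref{1-del} as written appears to be false for $s\le 1/2$ (the natural right-hand side should be $\delta\,\Lambda(R)\,R^{n-1}$ with $\Lambda$ as in \eqref{EN-I}), and the paper's proof of it is a gap even for $s>1/2$. Your slicing would yield the correct $s$-dependent shell estimate, but you should not expect to obtain the flat $\delta R^{n-1}$ in the range $s\le 1/2$.
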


\vspace{3mm}

Finally, it is worth noticing that, in order to prove all the above cited results, we need to perform careful computations on the strongly nonlocal form of the functional $\FF$. Hence, it was important for us to understand
some  
modifications of the classical techniques to deal with the fractional 
energy term, in particular to manage the contributions coming from far.
Therefore, in the Appendix we collect some general and independent results involving the Gagliardo-type norm in~(\ref{kinetic}), to be applied here and in~\cite{sv2011,sv0,sv1}, like construction of barriers, compactness results and various estimates, as well as regularity and limit properties for the solutions of equation~\eqref{equazione}.
\vspace{1mm}

We prove
Theorem~\ref{pro_min} in Section~\ref{78},
Theorem~\ref{theorem_unod} in Section~\ref{sec_minimi},
and Theorem~\ref{theorem_0956} in Section~\ref{sd97qq1}.
Some preliminary results on 
1-D minimizers on intervals are collected in Section~\ref{intermediate}.

\vspace{3mm}

\section{Minimization by sliding - Proof of 
Theorem~\ref{pro_min}}\label{78}

In this section, we prove the minimization result via sliding method stated in~Theorem~\ref{pro_min}. First, we need the following lemma, in which
we point out that the problem of
minimizing the energy in a given ball has a solution. 

\begin{lemma}\label{5656}
Let~$R>0$ and~$u_o:\R^n\rightarrow\R$ be a measurable function.
Suppose that there exists a measurable function~$\tilde u$ which
coincides with~$u_o$ in~$\CC B_R$ and such that~$\FF(\tilde 
u,B_R)<+\infty$. Then, there exists a measurable function~$u_\star$ such 
that~$\FF(u_\star,B_R)\le\FF(v,B_R)$ for any
measurable function~$v$ which
coincides with~$u_o$ in~$\CC B_R$.\end{lemma}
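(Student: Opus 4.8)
The plan is to prove Lemma~\ref{5656} by the direct method of the calculus of variations, applied to the functional $v\mapsto \FF(v,B_R)$ over the admissible class
$$
\mathcal{A} := \{ v \st v = u_o \text{ a.e. in } \CC B_R \}.
$$
First I would observe that $\mathcal{A}$ is nonempty by hypothesis (the competitor $\tilde u$ belongs to it) and that $m := \inf_{v\in\mathcal{A}} \FF(v,B_R) \in [0,+\infty)$ is finite, since $\FF\ge 0$ and $\FF(\tilde u,B_R)<+\infty$. Then I would pick a minimizing sequence $(v_k)_{k\in\N}\subset\mathcal{A}$, so that $\FF(v_k,B_R)\to m$; without loss of generality $\FF(v_k,B_R)\le m+1$ for all $k$. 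The goal is to extract a subsequence converging, in a suitable sense, to some $u_\star\in\mathcal{A}$, and then to pass to the liminf in the energy.

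\textbf{Compactness.} The energy bound gives a uniform bound on
$$
[v_k]^2 := \int_{B_R}\!\int_{B_R}\frac{|v_k(x)-v_k(y)|^2}{|x-y|^{n+2s}}\,dx\,dy
+ 2\int_{B_R}\!\int_{\CC B_R}\frac{|v_k(x)-v_k(y)|^2}{|x-y|^{n+2s}}\,dx\,dy,
$$
i.e. on the full Gagliardo-type contribution $\KK(v_k,B_R)$, as well as on $\int_{B_R} W(v_k)\,dx$. Here I expect to invoke a compactness result for the Gagliardo seminorm of the kind collected in the Appendix (the excerpt explicitly says such compactness results are gathered there): since all $v_k$ agree with the fixed function $u_o$ outside $B_R$, and since $\CC\Omega$-interaction term controls how $v_k$ on $B_R$ relates to the fixed exterior datum, one obtains a uniform $H^s$-type bound on a slightly larger ball (or on all of $\R^n$ after a cutoff), whence — by the compact embedding $H^s(B_{R'})\hookrightarrow\hookrightarrow L^2(B_{R'})$ for $R'>R$ — a subsequence (not relabeled) converges to some $u_\star$ strongly in $L^2_{\mathrm{loc}}$ and a.e. in $\R^n$. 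Since $v_k=u_o$ a.e. in $\CC B_R$, the a.e. limit $u_\star$ also equals $u_o$ a.e. in $\CC B_R$, so $u_\star\in\mathcal{A}$.

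\textbf{Lower semicontinuity.} It remains to show $\FF(u_\star,B_R)\le\liminf_k \FF(v_k,B_R)=m$. The potential term passes to the limit by Fatou's lemma, using $W\ge 0$ and $v_k\to u_\star$ a.e.: $\int_{B_R} W(u_\star)\le\liminf_k\int_{B_R} W(v_k)$. For the kinetic term $\KK(u_\star,B_R)$, I would note that the integrand $(x,y)\mapsto |v_k(x)-v_k(y)|^2/|x-y|^{n+2s}$ is nonnegative and, along the a.e.-convergent subsequence, converges a.e.\ on $(B_R\times B_R)\cup(B_R\times\CC B_R)$ to the corresponding integrand for $u_\star$ (using a.e.\ convergence of $v_k$ and the fact that $v_k=u_o$ is fixed on $\CC B_R$, so no convergence issue arises there); hence Fatou's lemma again yields $\KK(u_\star,B_R)\le\liminf_k\KK(v_k,B_R)$. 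Adding the two inequalities gives $\FF(u_\star,B_R)\le\liminf_k\FF(v_k,B_R)=m\le\FF(v,B_R)$ for every $v\in\mathcal{A}$, which is the claim.

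\textbf{Main obstacle.} The delicate point is the compactness step: because $\KK(v_k,B_R)$ only controls the Gagliardo energy of $v_k$ through interactions involving $B_R$, one must be careful that this does control a genuine $H^s$-norm on a neighborhood of $\overline{B_R}$ — which is exactly where the fixed exterior datum $u_o$ (and its finiteness of energy via $\tilde u$) enters, and where one has to be slightly careful because $u_o$ need not itself be globally $H^s$. The cleanest route is to work with the modified competitors $\bar v_k$ equal to $v_k$ on $B_R$ and to $\tilde u$ on $\CC B_R$; these have uniformly bounded $\KK(\bar v_k,B_R)$ and agree with the finite-energy function $\tilde u$ outside, so the Appendix compactness lemma applies directly and the a.e.\ limit is independent of this modification on $B_R$. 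I expect this to be routine given the Appendix results, so the lemma follows as a standard application of the direct method.
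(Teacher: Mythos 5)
Your proposal follows the same route as the paper: a minimizing sequence, the Appendix compactness result (Lemma~\ref{C.B}) to extract an a.e.\ convergent subsequence, and Fatou's lemma for lower semicontinuity of both the kinetic and potential terms. One small remark: Lemma~\ref{C.B} already yields precompactness in $L^2(B_R)$ directly from the uniform Gagliardo bound on $B_R\times B_R$ (plus an $L^2(B_R)$ bound, which is what the exterior interaction term with the fixed datum $u_o$ actually provides), so the passage to a larger ball and the ``modified competitors'' $\bar v_k$ — which in fact coincide with $v_k$, since $\tilde u=u_o$ on $\CC B_R$ — are not needed; otherwise the argument matches the paper's.
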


\begin{proof} We take a minimizing sequence, that is, let~$u_k$
be such that~$u_k=u_o$ in~$\CC B_R$, $\FF(u_k,B_R)\le
\FF(\tilde u,B_R)$ and
\begin{equation}\label{4543} \lim_{k\rightarrow+\infty} \FF(u_k,B_R)=
\inf \FF(v,B_R),\end{equation}where the infimum is taken over
any~$v$ that coincides with~$u_o$ in~$\CC B_r$.
Then,~\eqref{4543} and Lemma~\ref{C.B}
give that, up to subsequence, $u_k$ converges almost everywhere
to some~$u_\star$. Thus, the desired result follows from~\eqref{4543}
and Fatou Lemma.\qed
\end{proof}

\vspace{2mm}

Now, we are in position to prove that
every monotone solution of equation (\ref{equazione}),
satisfying  the limit condition (\ref{limiti}),
is a local minimizer for the corresponding energy functional $\FF$.

\begin{proof}[Proof of Theorem~\ref{pro_min}] We argue by contradiction. Suppose that
there exist~$r$, $c_o>0$ and~$\phi$ supported in~$B_r$
such that~$\FF(u,B_r)-\FF(u+\phi,B_r)\ge c_o$.

By choosing $\tilde{u}=u+\phi$ in Lemma~\ref{5656}, 
we have that $\FF(\tilde{u}) \le c_o + \FF(u,B_r)<+\infty$ and then
we can take~$u_\star$ minimizing~$\FF(v, B_r)$
among all the measurable functions~$v$ such that~$v=u$ in~$\CC B_r$.
Since we assumed by contradiction that $u$ is not a minimizer, then
there exists~$P\in\R^n$
such that
\begin{equation}\label{d77qi1msydfaJJSAS}
u(P)<u_\star(P).
\end{equation}

By cutting at the levels~$\pm 1$, which possibly makes~$\FF$ decrease,
we see that~$|u_\star|\le 1$.

Moreover, by the minimizing property of~$u_\star$,
\begin{equation}\label{5651}{\mbox{
$(-\Delta)^s u_\star(x)+W'(u_\star(x))=0$
for any~$x\in B_r$}}\end{equation}
and then, by Lemma~\ref{lem_co} in the Appendix and \cite[Theorem 3.3]{CS10}, $u_\star$ is continuous up to the boundary of $B_r$.\footnote{
We observe that the solutions of~\eqref{equazione} at every point are
also viscosity solutions (according to Definition~2.5
of~\cite{CS10}). Indeed, if~$v\ge u$ and~$v(x_o)=u(x_o)$, we have
that
\begin{eqnarray*}
&& \int_{\R^n}\frac{v(y)-v(x_o)}{|x_o-y|^{n+2s}}\,dy
\ge  \int_{\R^n}\frac{u(y)-v(x_o)}{|x_o-y|^{n+2s}}\,dy=
\int_{\R^n}\frac{u(y)-u(x_o)}{|x_o-y|^{n+2s}}\,dy.
\end{eqnarray*}
Note also that the solutions in the distributional sense satisfy the comparison principle and then they are solutions in the viscosity sense, too (see, e.g.,~\cite[Section~2.2]{silvestre} and, in particular, Proposition~2.2.6 there).
This allows us to use the results of~\cite{CS10}.
}\label{nota}

We claim that
\begin{equation}\label{90198}
|u_\star|<1.\end{equation}
To check this, let us argue by contradiction and suppose that, say,~$u_\star(\bar x)= +1$,
for some~$\bar x\in\R^n$.

Since~$|u|<1$ by our assumptions and~$u_\star=u$ in~$\CC B_r$, we have
that~$\bar x\in B_r$.
Then~\eqref{5651} and the fact that~$W'(+1)=0$ would give that
$$
\int_{\R^n}\frac{1-u_\star(y)}{|\bar x-y|^{n+2s}}\,dy \,=\,0.
$$
Since the integrand is always nonnegative, $u_\star$ must be identically
equal to~$+1$. But this is in contradiction with the fact 
that~$u_\star=u$ in~$\CC B_r$,
hence it proves~\eqref{90198}.\vspace{1mm}

Now, we note that there is a first contact point in $B_r$. Though this looks quite obvious, we give full details for the reader's convenience.

First, we claim that
there exists~$\bar k\in\R$ such that, 
\begin{equation}\label{56above}{\mbox{
if~$k\ge\bar k$, then~$u(x',x_n+k)\ge u_\star(x)$
for any~$x=(x',x_n)\in\R^n.$}}
\end{equation}
Again, this looks quite straightforward, but we give a complete argument:
we argue by contradiction
and we suppose that, for any~$k\in\N$, there exists~$x^{(k)}=
({x^{(k)}}',x^{(k)}_n)\in\R^n$ for which~$u
({x^{(k)}}',x^{(k)}_n+k)< u_\star({x^{(k)}})$.
Since~$u$ is monotone and~$k\ge0$, it follows that~$u
({x^{(k)}})< u_\star({x^{(k)}})$ and therefore~$x^{(k)}\in B_r$.

Thus, up to subsequence, we suppose that
$$ \lim_{k\rightarrow+\infty} x^{(k)}=x_\star,$$
for some~$x_\star$ in the closure of~$B_r$. Consequently, 
by~\eqref{lim56},
$$ +1\ =\  \lim_{k\rightarrow+\infty} u
({x^{(k)}}',x^{(k)}_n+k)\ \le \ 
\lim_{k\rightarrow+\infty} u_\star({x^{(k)}})\ =\ u_\star(x_\star)\ \le\ \sup_{B_r}
u_\star.$$\vspace{1mm}

Since this is in contradiction with~\eqref{90198},
we have proved~\eqref{56above}.
\vspace{1mm}
\noindent 
\\ Then, by~\eqref{56above} and the monotonicity of~$u$,
we have that, if~$k>\bar k$, then~$u(x',x_n+k)> u_\star(x)$
for any~$x=(x',x_n)\in\R^n$. We take~$\bar k$ as small as possible with
this property, i.e.,~$u(x',x_n+k)\ge u_\star(x)$ for any~$k\ge\bar k$
and any~$x\in\R^n$,
and there exist an infinitesimal sequence~$\eta_j>0$ and points~$p^{(j)}
\in\R^n$
for which~$u({p^{(j)}}',p^{(j)}_n+\bar k-\eta_j)\le u_\star(p^{(j)})$.
\vspace{1mm}

So, recalling~\eqref{d77qi1msydfaJJSAS}, we have that
$ u(P)\, <\, u_\star(P)\, \le \, u(P',P_n+\bar k)$ 
and then the monotonicity of~$u$ implies that
\begin{equation}\label{gsd78df1}
\bar k>0.
\end{equation}

We claim that
\begin{equation}\label{gsd78df2}
p^{(j)}\in B_r.
\end{equation}
Indeed, if~$p^{(j)}$ belonged to~$\CC B_r$ we would have that
$$ u({p^{(j)}}',p^{(j)}_n+\bar k-\eta_j)\ \le\  u_\star(p^{(j)})\ =\ 
u(p^{(j)}).$$
Hence, by the monotonicity of~$u$,
we would have that~$\bar k-\eta_j\le 0$ and so, by taking the limit
in~$j$, that~$\bar k\le0$. This is in contradiction with~\eqref{gsd78df1}
and so~\eqref{gsd78df2} is proved.
\vspace{1mm}

Then, by~\eqref{gsd78df2}, we may suppose that
$\dys \lim_{j\rightarrow+\infty} p^{(j)}=\zeta,$ 
for some~$\zeta$ in the closure of~$B_r$.
As a consequence,
the function~$w(x):=
u(x',x_n+\bar k) -u_\star(x)$ satisfies~$w(x)\ge0$
for any~$x\in\R^n$ and~$w(\zeta)=0$.

Thus, recalling~\eqref{5651}, we have
\begin{eqnarray*}
\int_{\R^n}\frac{w(y)}{|\zeta-y|^{n+2s}}\,dy  & = & 
-(-\Delta)^s w(\zeta)
\\
\\ & = &  -(-\Delta)^s u(\zeta',\zeta_n+\bar k) +(-\Delta)^s u_\star(\zeta)
\\
\\ & = &  W'(u(\zeta',\zeta_n+\bar k))
-W'(u_\star(\zeta))=0.\end{eqnarray*}
\vspace{1mm}

Since the integrand is nonnegative, this implies that~$w$ vanishes
identically, and so
$$
u(x',x_n+\bar k) =u_\star(x).
$$
Taking into account the above equality, \eqref{gsd78df1}
and the strict monotonicity of~$u$, it yields that
$$
u(x)<u_\star(x) \ \ \text{for any} \ x\in \R^n.
$$
This is in contradiction with the fact that~$u$
and~$u_\star$ coincide in~$\CC B_r$ and so Theorem~\ref{pro_min}
is proved.\qed
\end{proof}

\begin{remark}\label{rem_strict}
We note that hypothesis~\eqref{eq_strict} of strictly monotony in one direction in~Theorem~\ref{pro_min} may be relaxed as follows.
\begin{equation}\label{eq_relax}
\partial_{x_n} u(x) \geq 0, \ \ \text{for any} \ x \in \R^n.
\end{equation}
Precisely, if we assume that the function~$W$ belongs to~$C^2(\R)$, we can prove that the solutions of the Allen-Cahn equation~\eqref{eq56} that satisfy~\eqref{eq_relax} and~\eqref{lim56} are strictly increasing in one direction.

For this, we suppose by contradiction that $\partial_{x_n} u(\bar{x})=0$, for some~$\bar{x} \in \R^n$.
Then, by differentiating in~$x_n$ the equation in~\eqref{eq56},
we have that
\begin{eqnarray*}
-\int_{\R^n}\frac{
\partial_{x_n} u(y)
}{|\bar{x}-y|^{n+2s}}\,dy  & = &
\int_{\R^n}\frac{
\partial_{x_n}u(\bar x)-
\partial_{x_n}u(y)
}{|\bar{x}-y|^{n+2s}}\,dy
\\
\\ & = & 
(-\Delta)^s \partial_{x_n}u(\bar x)
\\
\\ & = & 
(-\Delta)^s \partial_{x_n}u(\bar x)+W''\big(u(\bar x)\big)
\,\partial_{x_n}u(\bar x)=0.\end{eqnarray*}
In view of~\eqref{eq_relax} the integrand is non-negative, and then we would obtain 
that~$\partial_{x_n}u$ vanishes identically. This would give
that~$u$ is constant along the $n$-direction, which is in contradiction
with~\eqref{lim56}. This proves that $u$ satisfies~\eqref{eq_strict}.
\end{remark}

\vspace{3mm}

\section{Minimizing the energy on intervals}\label{intermediate}
In this section, we deal with the problem of minimizing the energy $\FF$ on bounded intervals $I$ in $\R$.
\vspace{1mm}

The first result is in Lemma~\ref{lem_EN-1} below, in which we justify the existence of a minimizer $v_I$ and we provide a lower bound  for the corresponding energy $\FF(v_I, I)$ with respect to the length of $I$ (and depending on the fractional power $s$).

By Lemma~\ref{lem_EN-1}, together with some properties of the fractional Allen-Cahn equation~\eqref{equazione} proved in the Appendix, we will obtain an ulterior energy-estimate for the minimizers $v_I$ with $I=[0,R]$ and we will study
their asymptotic behavior as $R$ goes to $+\infty$ (see~Corollary~\ref{cor_alfa} and Corollary~\ref{cor_vmeno}).

\begin{lemma}\label{lem_EN-1}
Let~$I=[a,b]\subset\R$ be an interval with length~$|I|=b-a>4$.
Then, there exists
a measurable function~$v_I=v_{[a,b]}:\R\rightarrow[-1,+1]$
such that~$v_I(x)=-1$ if~$x\le a$, $v_I(x)=+1$
if~$x\ge b$ and
$$ \FF(v_I,\,I)\le\FF(v_I+\phi,\,I)$$
for any measurable function~$\phi$ supported in~$I$.

Moreover,
\begin{equation}\label{EN-I}
\FF(v_I,\,I)\le \Lambda(|I|):=
\begin{cases}
C_s(1+|I|^{1-2s}) & \text{if} \ s\in (0,\,1/2), \\
C_s(1+\log{|I|}) & \text{if} \ s = 1/2, \\
C_s & \text{if} \ s \in (1/2,\, 1),
\end{cases}
\end{equation}
for a suitable constant~$C_s>1$ depending only on $s$.
\end{lemma}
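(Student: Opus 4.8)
The plan is to split the statement into its two parts: first the existence of a minimizer $v_I$ with the prescribed boundary values, and then the quantitative energy bound $\Lambda(|I|)$.

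For existence, I would invoke Lemma~\ref{5656} with $\Omega=I$ and $u_o$ the step function that equals $-1$ on $(-\infty,a]$ and $+1$ on $[b,\infty)$ (any measurable extension inside $I$ will do for $u_o$, since we only prescribe its values on $\CC I$). To apply Lemma~\ref{5656} I must exhibit a single competitor $\tilde u$, agreeing with $u_o$ on $\CC I$, with $\FF(\tilde u,I)<+\infty$: take $\tilde u$ to be a smooth monotone interpolation between $-1$ and $+1$ across $I$, say a fixed Lipschitz profile rescaled to the interval $[a+1,b-1]$ and constant $\pm1$ outside; since $\tilde u$ is Lipschitz and $\pm1$ at infinity, both the local Gagliardo term on $I\times I$ and the cross term $I\times\CC I$ converge (the tails decay because $\tilde u-(\pm1)$ is compactly supported), and $\int_I W(\tilde u)<\infty$ since $W$ is smooth and $\tilde u$ is bounded. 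Lemma~\ref{5656} then produces $v_I$. Cutting $v_I$ at the levels $\pm1$ only decreases $\FF$ (both the kinetic term, by the standard truncation inequality $|T(x)-T(y)|\le|x-y|$ for the truncation map $T=\max(-1,\min(1,\cdot))$, and the potential term, since $W\ge0$ and $W(\pm1)=0$), so we may assume $v_I:\R\to[-1,+1]$, and it still agrees with $u_o$ on $\CC I$. This gives the first assertion.

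For the energy bound, the natural strategy is simply to estimate $\FF(\tilde u,I)$ for the explicit competitor $\tilde u$ above and use $\FF(v_I,I)\le\FF(\tilde u,I)$. Write $\tilde u=g(x)$ with $g$ a fixed smooth monotone profile that is $-1$ for $x\le a+1$, $+1$ for $x\ge b-1$, and has $|g'|\le C$ and $g'$ supported in $[a+1,b-1]$. The potential term is bounded by $C|I|$ at worst, but in fact $W(\tilde u)$ is supported in the transition region; more carefully, to get the stated bound one should build the transition to occur over a region of bounded length (independent of $|I|$), e.g. have $g$ interpolate on $[a+1,a+2]$ and $[b-2,b-1]$ and stay $\pm1$ in between, so that $\int_I W(\tilde u)\le C$. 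Then the delicate part is the kinetic energy. The local part $\int_I\int_I \frac{|\tilde u(x)-\tilde u(y)|^2}{|x-y|^{n+2s}}$ (here $n=1$) splits into the near-diagonal contribution, controlled using $|\tilde u(x)-\tilde u(y)|\le C|x-y|$ (giving a finite integral of $|x-y|^{-2s}$ near the diagonal since $2s<2$), and the far contribution where $|\tilde u(x)-\tilde u(y)|\le2$, giving $\int\int_{|x-y|>1}\frac{4}{|x-y|^{1+2s}}$ restricted to $I\times I$. The cross term $\int_I\int_{\CC I}\frac{|\tilde u(x)-\tilde u(y)|^2}{|x-y|^{1+2s}}$ is handled similarly, using that $\tilde u(y)=\pm\mathrm{sign}(y-\frac{a+b}{2})$ and $\tilde u(x)-\tilde u(y)=0$ unless $x$ is in the bounded transition region or $x$ and $y$ lie on opposite "sides". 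Counting dimensions: the transition region has bounded length, so those contributions are $O(1)$; the remaining contribution comes from pairs where $x$ ranges over (a large portion of) $I$ on one side of the transition and $y$ over $\CC I$, contributing $\int_0^{|I|}\!\!\int_0^{\infty}\frac{dx\,dy}{(x+y)^{1+2s}}$-type integrals, which evaluate to $C(1+|I|^{1-2s})$ when $s<1/2$, $C(1+\log|I|)$ when $s=1/2$, and $C$ when $s>1/2$. Collecting these yields $\FF(\tilde u,I)\le\Lambda(|I|)$, hence the claim.

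The main obstacle is the bookkeeping in the kinetic-energy estimate: one must carefully separate the near-diagonal singular part (where the Lipschitz bound on $\tilde u$ saves integrability against $|x-y|^{-1-2s}$ only because the numerator gains two powers of $|x-y|$) from the long-range part (where the relevant integral $\int\!\!\int_{I\times\CC I}|x-y|^{-1-2s}$ genuinely diverges as $|I|\to\infty$ in the regime $s\le 1/2$), and verify that the borderline $\log$ and the power $|I|^{1-2s}$ emerge with the right constants. A clean way to organize this is to translate so $I=[0,L]$ with $L=|I|$, exploit that $\tilde u$ depends only on the distance to the endpoints, and reduce everything to one-dimensional integrals of the form $\int_0^L\!\int_L^\infty (y-x)^{-1-2s}\,dy\,dx$ and $\int_0^L\!\int_{-\infty}^0(x-y)^{-1-2s}\,dy\,dx$, which are elementary; the constant $C_s>1$ is then read off directly. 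Boundedness of the transition-region contributions (both local and cross) is uniform in $L$ precisely because the transition length was chosen bounded.
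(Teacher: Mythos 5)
Your proposal is correct and follows essentially the paper's route: invoke Lemma~\ref{5656} with an explicit competitor having a transition region of bounded length (independent of $|I|$), then tally the kinetic contributions, with the pieces where $x$ and $y$ lie on opposite sides of the transition (both in $I\times I$ and in $I\times\CC I$) producing the $|I|^{1-2s}$, $\log|I|$, and $O(1)$ scalings, and the near-diagonal and transition-region pieces contributing only $O(1)$. The paper's competitor is the piecewise-linear profile $h(x)=x$ on $[-1,1]$, $\pm1$ outside, after translating so that $a<-2<2<b$; your description of interpolating on both $[a+1,a+2]$ and $[b-2,b-1]$ while staying $\pm1$ in between is slightly tangled (a monotone profile can only have one nontrivial unit-length transition), but once read as a single bounded transition the bookkeeping goes through exactly as you sketch, and your explicit truncation of the resulting minimizer at $\pm1$ is a small but correct rigor point that the paper leaves implicit.
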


\begin{proof}
For any fixed interval $I=[a,b]\subset\R$, we will prove the existence of the function $v_I$, by means of Lemma~\ref{5656}. Therefore, 
it suffices to construct a suitable competitor in $[a,b]$. 
\vspace{1mm}

In view of translations invariance, we may suppose that $a<-2$ and $b>+2$. 
We consider the following function $h:\R\to[-1,1]$ defined by
$$
h(x):=
\begin{cases}
-1 &  \text{if} \ x\leq -1, \\
x & \text{if} \ x \in (-1, +1), \\
+1 & \text{if} \ x\geq +1,
\end{cases} 
$$

Let us compute each contribution in the energy $\FF(h, \, I)$. In the following, we will denote by $C_s$ suitable positive quantities, possibly different from line to line, and possibly depending on $s$.

\vspace{2mm}

First, since $h(x)\in \{-1,+1\}$ out of $(-1,1)$, we deduce
$$
\int_a^b W(h(x))\, dx \, = \, \int_{-1}^{+1} W(h(x)) \,dx \, \leq \, C_s,
$$
\vspace{1mm}

Second, let us compute the contributions in the kinetic term $\KK(h,\, I)$.
\vspace{1mm}

If $s\in (0,\,1/2)$,
$$
\int_a^{-1}\int_{+1}^b \frac{|h(x)-h(y)|^2}{|x-y|^{1+2s}}\,dx\, dy \, = \, 4 \int_a^{-1}\int_{+1}^b\frac{dx\,dy}{|x-y|^{1+2s}} \, \leq \, C_s|I|^{1-2s}.
$$

If $s=1/2$,
$$
\int_a^{-1}\int_{+1}^b \frac{|h(x)-h(y)|^2}{|x-y|^{1+2s}}\,dx\, dy \, = \, 4 \int_a^{-1}\int_{+1}^b\frac{dx\,dy}{|x-y|^{2}} \, \leq \, C_s\log{|I|}.
$$

If $s\in (1/2,\, 1)$,
\begin{eqnarray*}
\int_a^{-1}\int_{+1}^b \frac{|h(x)-h(y)|^2}{
|x-y|^{1+2s}}\,dx\, dy  =  4 \int_a^{-1}\int_{+1}^b\frac{dx\,dy}{
|x-y|^{1+2s}}\, \leq \, C_s.
\end{eqnarray*}

If $s\in(0,1)$,
\begin{eqnarray*}
\int_{-1}^{+1}\int_{-1}^{+1} \frac{|h(x)-h(y)|^2}{|x-y|^{1+2s}}\,dx\, dy  =  \int_{-1}^{+1}\int_{-1}^{+1}|x-y|^{1-2s}\, dx\, dy 
\, \leq \, C_s,
\end{eqnarray*}
\begin{eqnarray*}
\int_{-1}^{+1}\left[
\int_{+1}^{b} \frac{|h(x)-h(y)|^2}{|x-y|^{1+2s}}\,dy\right]\, 
dx 
= 
\int_{-1}^{+1}\left[
\int_{+1}^{b}\frac{|1-x|^2}{|x-y|^{1+2s}}\,dy\right]\, dx
\, \leq \, C_s
\end{eqnarray*}
and
\begin{eqnarray*}
\int_{-1}^{+1}\left[
\int_{a}^{-1} \frac{|h(x)-h(y)|^2}{|x-y|^{1+2s}}\,dy\right]\, 
dx
 = 
\int_{-1}^{+1}\left[
\int_{a}^{-1}\frac{|x+1|^2}{|x-y|^{1+2s}}\, dy\right]\, dx 
\, \leq \, C_s.
\end{eqnarray*}

Now,
we estimate the contribution coming from far in the energy:
\begin{eqnarray*}
\int\!\!\int_{[a,b]\times(\CC[a,b])} 
\frac{|h(x)-h(y)|^2}{|x-y|^{1+2s}}\,dx\, dy  & \leq & 
 \int\!\!\int_{[-1,1]\times(\CC[a,b])}
\frac{4}{|x-y|^{1+2s}}\,dx\, dy  
\\
&&+
\int\!\!\int_{[a,-1]\times [b,+\infty]}
\frac{4}{|x-y|^{1+2s}}\,dx\, dy  
\\ &&+
\int\!\!\int_{[1,b]\times(\CC(-\infty,a])}
\frac{4}{|x-y|^{1+2s}}\,dx\, dy  
\\
\\
&\le&
 \begin{cases}
C_s\big(1+|I|^{1-2s}\big) & \text{if} \ s\in (0,\,1/2), \\
C_s\big(1+\log{|I|}\big) & \text{if} \ s = 1/2, \\
C_s & \text{if} \ s \in (1/2,\, 1).
\end{cases} 
\end{eqnarray*}
All in all,
\begin{equation}\label{label}
\dys
\FF(h,\, I) \, \leq \,
\begin{cases}
C_s(1+|I|^{1-2s}) & \text{if} \ s\in (0,\,1/2), \\
C_s(1+\log{|I|}) & \text{if} \ s = 1/2, \\
C_s & \text{if} \ s \in (1/2,\, 1).
\end{cases} 
\end{equation}
Consequently,
we use Lemma~\ref{5656}
to obtain that there exists the desired function~$v_I$
such that~$v_I(x)=h(x)$ if~$x\in \CC[a,b]$ and~$\FF(v_I,\,I)\le
\FF(v_I+\varphi,\,I)$ for any measurable~$\varphi$ supported in $I$.
The estimate in~\eqref{EN-I} plainly follows from~\eqref{label}.\qed
\end{proof}
\vspace{1mm}

\begin{corollary}\label{cor_alfa}
Let the notation of Lemma~\ref{lem_EN-1} hold.
Then, fixed any~$\ell>0$, there exists a function~$\alpha_\ell
:(0,+\infty)\rightarrow(0,+\infty)$ such that
$$
\lim_{R\rightarrow+\infty} \alpha_\ell (R)=0
$$
and
\begin{equation}\label{eq_sul}
\FF(v_{[0,R]}, [-\ell,\ell]) \, \le \, \FF(v_{[0,R]}+\phi, [-\ell,\ell])
+\alpha_\ell(R).
\end{equation}
for any measurable function~$\phi$ supported in~$[-\ell,\ell]$. 
\end{corollary}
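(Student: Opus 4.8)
The plan is to compare the minimizer $v_{[0,R]}$ with a fixed global profile on the small interval $[-\ell,\ell]$, and to quantify the defect of minimality coming from the fact that $v_{[0,R]}$ minimizes on $[0,R]$ rather than on $[-\ell,\ell]$. Write $v=v_{[0,R]}$ for short. By Lemma~\ref{lem_EN-1}, $v$ minimizes $\FF(\cdot,[0,R])$ among competitors equal to $h$ (hence equal to $\mp1$) outside $[0,R]$, so in particular for any $\phi$ supported in $[-\ell,\ell]\subset[0,R]$ (assuming, as we may by translating, that $R$ is large enough that $[-\ell,\ell]\subset[0,R]$; more honestly one should center things, but the estimate is translation-robust and the localized version below is what matters), we already have $\FF(v,[0,R])\le\FF(v+\phi,[0,R])$. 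The point is to pass from a global inequality on $[0,R]$ to a local one on $[-\ell,\ell]$, paying an error $\alpha_\ell(R)$.

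The key step is the following ``locality modulo tails'' decomposition. For a function $w$ and nested intervals $[-\ell,\ell]\subset[0,R]$, split
\[
\FF(w,[0,R])=\FF(w,[-\ell,\ell])+\big(\text{cross terms between }[-\ell,\ell]\text{ and }[0,R]\setminus[-\ell,\ell]\big)+\big(\text{rest}\big).
\]
When $\phi$ is supported in $[-\ell,\ell]$, the ``rest'' term is identical for $w=v$ and $w=v+\phi$ and cancels. Hence
\[
\FF(v,[-\ell,\ell])-\FF(v+\phi,[-\ell,\ell])
=\big(\FF(v,[0,R])-\FF(v+\phi,[0,R])\big)-\big(\Delta_{\mathrm{cross}}\big)\le -\Delta_{\mathrm{cross}},
\]
where $\Delta_{\mathrm{cross}}$ is the difference of the cross (Gagliardo) interaction terms $\int_{[-\ell,\ell]}\int_{\R\setminus[-\ell,\ell]}\frac{|w(x)-w(y)|^2}{|x-y|^{1+2s}}$ for $w=v+\phi$ minus $w=v$. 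So it suffices to bound $|\Delta_{\mathrm{cross}}|$ by some $\alpha_\ell(R)\to0$. Expanding the square, $\Delta_{\mathrm{cross}}$ is controlled by $\|\phi\|_{L^\infty}$ (which is bounded since cutting at $\pm1$ only decreases the energy, so WLOG $|v|,|v+\phi|\le1$) times $\int_{-\ell}^{\ell}\int_{\R\setminus[-\ell,\ell]}\frac{dx\,dy}{|x-y|^{1+2s}}$ --- but that integral does \emph{not} tend to zero; it is a fixed finite constant depending on $\ell$. This is the main obstacle, and it means a cruder bound will not suffice: one cannot throw away all of the outside. The resolution is that $v_{[0,R]}$ is itself nearly a translate of the one-dimensional minimizer near its transition, so the cross term for $v$ and for $v+\phi$ are each individually $O(1)$ but their \emph{difference}, after using that the relevant variation happens on the fixed set $[-\ell,\ell]$, can be compared to a translated copy of the same quantity for the limiting profile.

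The cleaner route, which I would actually pursue, is a compactness argument: show that as $R\to+\infty$ the minimizers $v_{[0,R]}$ (recentered at their zero, or at the point where they equal $0$) converge, locally uniformly and in the relevant Gagliardo sense on compact sets, to the one-dimensional global minimizer $u^{(0)}$ of Theorem~\ref{theorem_unod}; the needed compactness is exactly Lemma~\ref{C.B} together with the uniform energy bound $\FF(v_{[0,R]},[0,R])\le\Lambda(R)$ from Lemma~\ref{lem_EN-1} and the interior regularity of Lemma~\ref{lem_co}. Given such convergence, one defines
\[
\alpha_\ell(R):=\sup_{\phi}\Big(\FF(v_{[0,R]},[-\ell,\ell])-\FF(v_{[0,R]}+\phi,[-\ell,\ell])\Big)_{+},
\]
the supremum over $\phi$ supported in $[-\ell,\ell]$ with $|v_{[0,R]}+\phi|\le1$, and argues by contradiction: if $\alpha_\ell(R_j)\ge\delta>0$ along a sequence $R_j\to\infty$, pick near-optimal $\phi_j$; by the uniform $L^\infty$ bound and compactness extract $\phi_j\to\phi_\infty$ and $v_{[0,R_j]}\to u^{(0)}$ on $[-\ell-1,\ell+1]$, pass to the liminf (using Fatou, as in Lemma~\ref{5656}, for the energies and the continuity of the cross terms in this topology), and conclude $\FF(u^{(0)},[-\ell,\ell])>\FF(u^{(0)}+\phi_\infty,[-\ell,\ell])$, contradicting that $u^{(0)}$ is a local minimizer (Theorem~\ref{pro_min} applied in dimension $1$, or directly the minimality in Theorem~\ref{theorem_unod}). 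The hard part is making the cross-term convergence rigorous --- i.e.\ justifying that $\int_{-\ell}^{\ell}\int_{\R\setminus[-\ell,\ell]}\frac{|v_{R}(x)-v_R(y)|^2}{|x-y|^{1+2s}}$ passes to the limit, which requires a uniform tail bound on $\int_{-\ell}^\ell\int_{|y|>M}\frac{|v_R(x)-v_R(y)|^2}{|x-y|^{1+2s}}\,dy\,dx\le \int_{-\ell}^\ell\int_{|y|>M}\frac{4\,dy\,dx}{|x-y|^{1+2s}}\to0$ as $M\to\infty$ uniformly in $R$, plus local uniform convergence on $[-M,M]$; both are in hand from the stated lemmas.
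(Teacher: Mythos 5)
Your strategy---compactness of the $v_{[0,R]}$ plus a contradiction argument using local minimality of the limit---is a genuinely different route from the paper, and as written it is circular within the paper's logical structure. The paper's proof is a direct rescaling argument that needs no compactness: one compares $v_{[0,R]}$ with $z_R(x):=v_{[-\ell,R+\ell]}\big(\tfrac{(R+2\ell)x}{R}-\ell\big)$, which takes the right boundary values ($-1$ for $x\le 0$, $+1$ for $x\ge R$) to be admissible on $[0,R]$; a change of variables gives $\FF(z_R,[0,R])\le\big(\tfrac{R}{R+2\ell}\big)^{1-2s}\FF(v_{[-\ell,R+\ell]},[-\ell,R+\ell])$. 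Combining with minimality of $v_{[0,R]}$ on $[0,R]$, converting $\FF(v_{[0,R]},[0,R])$ into $\FF(v_{[0,R]},[-\ell,R+\ell])$ at the cost of an explicit $o(1)$ correction (since $v_{[0,R]}$ is constant outside $[0,R]$), invoking minimality of $v_{[-\ell,R+\ell]}$ on the enlarged interval, and finally applying the locality cancellation you correctly wrote down yields \eqref{eq_sul} with an explicit $\alpha_\ell$.

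The gap in your proposal is at the very last step. To close the contradiction you invoke that the local uniform limit of $v_{[0,R]}$ is a local minimizer on $[-\ell,\ell]$---you name $u^{(0)}$ and Theorem~\ref{theorem_unod}---but both of these are established in the paper \emph{after} and \emph{from} Corollary~\ref{cor_alfa}. Indeed Corollary~\ref{cor_vmeno} uses \eqref{eq_sul} precisely to upgrade the limit to a minimizer on all of $\R$ (so that the equation holds on $(-\infty,0)$ too, forcing the limit to be $\equiv -1$), and the existence of $u^{(0)}$ in Theorem~\ref{theorem_unod} is built out of Corollaries~\ref{lem_momo}, \ref{cor_alfa} and~\ref{cor_vmeno}. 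The underlying obstruction to a soft proof is exactly that $v_{[0,R]}$ is a minimizer only on $[0,R]$, which never contains the negative half of $[-\ell,\ell]$; some quantitative input is needed to propagate minimality there, and the rescaling trick is what supplies it. There is also a secondary mix-up: the un-recentered $v_{[0,R]}$ tends to the constant $-1$ on compacts, not to $u^{(0)}$; the profile $u^{(0)}$ is the limit of the \emph{recentered} functions $v_{[-K,K]}(\cdot+p_K)$, and recentering would translate the support of $\phi$, which changes the statement being proved. Your opening observation about the cancellation of the cross and rest terms for perturbations supported in $[-\ell,\ell]$ is correct and is in fact the final step of the paper's argument; what you are missing is the rescaled-competitor bridge, not a compactness lemma.
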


\begin{proof} The main idea is scaling the minimizer in~$[-\ell,R+\ell]$
in order to get a suitable competitor and then computing.
Let
$$
z_R(x)
:= v_{[-\ell,R+\ell]}\left( \frac{(R+2\ell)x}{R}-\ell\right).
$$
We observe that~$z_R(x)=-1$ if~$x\le0$ and~$z_R(x)=+1$
if~$x\ge R$. Therefore, by the minimality of~$v_{[0,R]}$, 
we get
\begin{equation}\label{eq_02e}
\FF(v_{[0,R]}, [0,R]) \, \leq \, \FF(z_R, [0,R]).
\end{equation}

Now, by scaling the variable of integration, we obtain
\begin{eqnarray}\label{eq_2l}
&&\FF(z_R, [0,R]) \nonumber \\
&& \  = \left(\frac{R}{R+2\ell}\right)^{\!1-2s}
\left[ \frac{1}{2}\int\!\!\int_{[-\ell, R+\ell]\times[-\ell, R+\ell]}\!\frac{|v_{[-\ell, R+\ell]}(x)-v_{[-\ell, R+\ell]}(y)|^2}{|x-y|^{1+2s}}\,dx\,dy
\right. \nonumber \\
&& \  \quad \left.  \! + \int\! \!\int_{[-\ell, 
R+\ell]\times(\CC[-\ell, R+\ell])} \frac{|v_{[-\ell, R+\ell]}(x)-v_{[-\ell, R+\ell]}(y)|^2}{|x-y|^{1+2s}}\,dx\,dy
\right]. \nonumber \\
&& \  \quad + \left(\frac{R}{R+2\ell}\right)\int_{-\ell}^{R+\ell}W\big(v_{[-\ell, R+\ell]}(x)\big)\, dx \nonumber \\
\nonumber \\
& & \ \leq \left(\frac{R}{R+2\ell}\right)^{\!1-2s} \FF(v_{[-\ell, R+\ell]}, {[-\ell, R+\ell]}).
\end{eqnarray}
\vspace{1mm}

Now, we set
$$
\beta_\ell(R):=
\begin{cases}
0 & \text{if} \ s\in (0,\,1/2], \\
\\
 \dys C_s\left[
\left(\frac{R}{R+2\ell}\right)^{\!1-2s}-1
\right] & \text{if} \ s\in (1/2\,, 1),
\end{cases}
$$
where $C_s$ is the constant introduced in~\eqref{EN-I}.
Notice that
$\dys \lim_{R\rightarrow+\infty} \beta_\ell(R)=0.$

In view of~Lemma~\ref{lem_EN-1}, we see that~\eqref{eq_2l} becomes 
\begin{eqnarray}\label{eq_35bis}
\FF(z_R, [0,R])  & \leq  &  \FF(v_{[-\ell, R+\ell]}, [-\ell, R+\ell]) \nonumber \\
&&  + \left[\left(\frac{R}{R+2\ell}\right)^{\!1-2s}-1
\right]\FF(v_{[-\ell, R+\ell]}, [-\ell, R+\ell])
\nonumber \\
\nonumber \\
& \leq &  \FF(v_{[-\ell, R+\ell]}, [-\ell, R+\ell])+\beta_\ell(R).
\end{eqnarray}
Hence, combining~\eqref{eq_35bis} with~\eqref{eq_02e}, we obtain, for any fixed $\ell>0$,
\begin{eqnarray}\label{eq_4lbis}
\FF(v_{[0,R]}, [0,R]) \, \leq \, \FF(v_{[-\ell, R+\ell]}, [-\ell, R+\ell])+\beta_\ell(R).
\end{eqnarray}

Now, we claim that, for any fixed $\ell>0$ there exists a function $\gamma_\ell:(0,+\infty)\to(0,+\infty)$ such that $\gamma_\ell(R)\to0$ as $R\to +\infty$ and
\begin{equation}\label{eq_5l}
\FF(v_{[0,R]},[0,R]) \, = \, \FF(v_{[0,R]}, [-\ell,R+\ell]) - \gamma_\ell(R).
\end{equation}
Indeed, since $v_{[0,R]}(x) = -1$ for any $x\leq 0$ and $v_{[0,R]}(x) = +1$ for any $x\geq R$, we have
\begin{eqnarray*}
&&\!\!\! \FF(v_{[0,R]},[-\ell,R+\ell])-\FF(v_{[0,R]},[0,R])
\\
&& \ \ \le
\, \int\!\!\int_{[-\ell,0]\times[R,+\infty)}
\frac{4}{|x-y|^{1+2s}}\,dx\,dy+\int\!\!\int_{[R,R+\ell]\times(-\infty,-\ell]}                     
\frac{4}{|x-y|^{1+2s}}\,dx\,dy
\\
&& \ \ \le \, \gamma_\ell(R),
\end{eqnarray*}
with
$$
\dys \gamma_\ell(R) \,:=\,\begin{cases}
\dys 4\log{\left(1+\frac{2\ell}{R}\right)} & \text{if} \ s=1/2,\\
\\
\dys \frac{2}{s(1-2s)}\Big((R+2\ell)^{1-2s}-R^{1-2s}\Big) & \text{if} \ s 
\neq 1/2.
\end{cases}$$
This gives~\eqref{eq_5l}.
\vspace{1mm}

{F}rom~\eqref{eq_4lbis} and~\eqref{eq_5l}, 
together with the minimality of the function $v_{[-\ell, R+\ell]}$, it follows
that \begin{equation}\label{eq_8l}
\FF(v_{[0,R]},[-\ell,R+\ell]) - \beta_{\ell}(R)-\gamma_\ell(R) \, \leq \, \FF(v_{[0,R]}+\phi, [-\ell, R+\ell]).
\end{equation}
\vspace{1mm}

Now, we use the fact that $\phi$ is supported in $[-\ell,\ell]$
to obtain
\begin{equation*}
\begin{split}
& \FF(v_{[0,R]}+\phi,[-\ell, R+\ell]) 
- \FF(v_{[0,R]} +\phi, 
[-\ell, \ell])
\\ &\ \, = \frac{1}{2}\int\!\!\int_{[\ell,R+\ell]\times[\ell,R+\ell]} 
\frac{|v_{[0,R]}(x)-v_{[0, R]}(y)|^2}{|x-y|^{1+2s}}\,dx\,dy
\\ & \quad \ \, + \int\!\!\int_{[\ell, R+\ell]\times(\CC[-\ell, 
R+\ell])}\frac{|v_{[0,R]}(x)-v_{[0, R]}(y)|^2}{|x-y|^{1+2s}}\,dx\,dy
+\int_{[\ell,R]} W(v_{[0,R]}(x))\,dx\\
&\ \, = \FF(v_{[0,R]},[-\ell, R+\ell]) 
- \FF(v_{[0,R]},[-\ell, \ell]).
\end{split}\end{equation*}
By plugging this identity into~\eqref{eq_8l}, we obtain
the desired 
result, with~$\alpha_\ell(R):=\beta_\ell(R)+\gamma_\ell(R).$\qed
\end{proof}

\vspace{1mm}

\begin{corollary}\label{cor_vmeno}
Let the notation of~Lemma~\ref{lem_EN-1} hold. Then the function 
$v_{[0,R]}$ converges to~$-1$
locally uniformly as $R$ goes to $+\infty$.
\end{corollary}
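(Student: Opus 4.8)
The plan is to argue by contradiction, exploiting the almost‑minimality of $v_{[0,R]}$ on fixed intervals from Corollary~\ref{cor_alfa}. If the conclusion failed, then — since $v_{[0,R]}$ takes values in $[-1,+1]$, equals $-1$ on $(-\infty,0]$, and is $\ge-1$ everywhere — there would exist $\ell>0$, $\delta>0$, a sequence $R_j\to+\infty$ and points $x_j\in[0,\ell]$, which I may take to be maximizers over $[0,\ell]$ of the continuous function $v_{[0,R_j]}$, with $v_{[0,R_j]}(x_j)\ge-1+\delta$. The first ingredient is a uniform local energy bound: for each fixed $L>0$ there is $C_L$, independent of $R$, with $\FF(v_{[0,R]},[-L,L])\le C_L$ for all large $R$. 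For $s>1/2$ this follows from Lemma~\ref{lem_EN-1} and the monotonicity of $\FF(v,\cdot)$ with respect to the domain; for $s\le1/2$, where the global energy genuinely grows, I would instead test the $\alpha_L(R)$‑almost‑minimality of $v_{[0,R]}$ on $[-L,L]$ against the competitor $w$ that equals $v_{[0,R]}$ off $[-L,L]$, equals $-1$ on $[-L,0]$, and on $[0,L]$ is affine from $-1$ at $0$ to $v_{[0,R]}(L)$ at $L$; every term of $\FF(w,[-L,L])$ is then bounded uniformly in $R$ (the potential and short‑range Gagliardo terms because $w$ is bounded and Lipschitz on $[-L,L]$; the interaction with $[R,+\infty)$ because it is of order $R^{-2s}$; the one with $(-\infty,-L]$ because $w+1$ vanishes on $[-L,0]$ and is linearly small near $0$; and the one with $[L,R]$ because $w(L)=v_{[0,R]}(L)$, and interior Hölder estimates for $(-\Delta)^s v+W'(v)=0$ on $(0,R)$, whose right‑hand side is bounded by $\max_{[-1,1]}|W'|$, furnish a modulus of continuity for $v_{[0,R]}$ near $L$, uniform in $R$, which tames the corner singularity of the kernel).

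With this bound in hand, Lemma~\ref{C.B} gives, along a subsequence (not relabelled), $v_{[0,R_j]}\to v_\infty$ in $L^1_{\mathrm{loc}}(\R)$ and almost everywhere, with $|v_\infty|\le1$ and $v_\infty\equiv-1$ on $(-\infty,0]$. By lower semicontinuity of $\FF$ (as in Lemma~\ref{5656}) together with the almost‑minimality of Corollary~\ref{cor_alfa} — any competitor $\psi$ for $v_\infty$ produces competitors for $v_{[0,R_j]}$ by adding the correction $(v_{[0,R_j]}-v_\infty)$ off $[-L,L]$ — the limit $v_\infty$ is a local minimizer of $\FF$ in every ball, hence continuous (Lemma~\ref{lem_co}) and a solution of $(-\Delta)^s v_\infty+W'(v_\infty)=0$ on all of $\R$, as in~\eqref{5651}. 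Evaluating this identity at any $x_0<0$, where $v_\infty(x_0)=-1$ and $W'(-1)=0$, forces $\int_\R\frac{-1-v_\infty(y)}{|x_0-y|^{1+2s}}\,dy=0$; since $v_\infty\ge-1$ makes the integrand nonpositive, this yields $v_\infty\equiv-1$.

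It remains to upgrade the convergence $v_{[0,R_j]}\to-1$ to a locally uniform one, which contradicts $v_{[0,R_j]}(x_j)\ge-1+\delta$. On each interval $[\eps,\ell]$ with $\eps>0$, the interior Hölder bounds used above make $\{v_{[0,R_j]}\}$ equicontinuous, so $L^1$ convergence forces uniform convergence to $-1$ there; in particular $x_j\to0$ (otherwise $v_{[0,R_j]}(x_j)\to v_\infty=-1$ at the interior limit point). But then $x_j>0$ for $j$ large, and in $(-\Delta)^s v_{[0,R_j]}(x_j)=-W'(v_{[0,R_j]}(x_j))$ the interaction of the value $v_{[0,R_j]}(x_j)\ge-1+\delta$ with the half‑line $(-\infty,0]$, on which $v_{[0,R_j]}\equiv-1$, contributes at least $\frac{\delta}{2s}x_j^{-2s}\to+\infty$; the interaction with $[0,\ell]$ is nonnegative because $x_j$ is a maximum point there, and the remaining mass sits at distance at least $\ell/2$ from $x_j$ (as $x_j\to0$) and hence contributes a bounded amount — contradicting $|W'|\le\max_{[-1,1]}|W'|$. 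Thus no such $\ell,\delta,R_j$ exist. (If one instead establishes separately, by the sliding/comparison scheme used for Theorem~\ref{pro_min}, that $v_{[0,R]}$ is nondecreasing, this last step becomes immediate, since then $-1\le v_{[0,R_j]}\le v_{[0,R_j]}(\ell)\to-1$ on $[0,\ell]$.)

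The step I expect to be the main obstacle is the uniform local energy bound in the regime $s\le1/2$: since the total energy of $v_{[0,R]}$ really grows with $R$, one is forced to localize through the almost‑minimality on fixed intervals and to estimate the competitor's long‑range interactions by hand, which is exactly the sort of nonlocal bookkeeping the Appendix is designed to handle.
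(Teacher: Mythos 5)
Your proof is correct in spirit but takes a noticeably more roundabout route than the paper's. The paper's argument is short: $v_{[0,R]}$ solves the fractional Allen--Cahn equation in $[0,R]$ with $|v_{[0,R]}|\le 1$, so Lemma~\ref{lem_co} together with the boundary regularity of~\cite[Theorem~3.3]{CS10} gives a modulus of continuity for $v_{[0,R]}$ that is \emph{uniform in $R$}. Arzel\`a--Ascoli then immediately gives a subsequence converging locally uniformly to some $v$; Lemma~\ref{lem_limiti} passes the equation to the limit on $[0,\infty)$; Corollary~\ref{cor_alfa} plus dominated convergence show that $v$ is a local minimizer on all of $\R$, hence the equation holds everywhere; evaluating at $x_0<0$, where $v(x_0)=-1$ and $W'(-1)=0$, forces $\int\frac{-1-v(y)}{|x_0-y|^{1+2s}}\,dy=0$ and hence $v\equiv-1$. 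You reach the same conclusion, but you reconstruct compactness from a uniform local energy bound and the $L^2$ Riesz--Fr\'echet--Kolmogorov criterion (Lemma~\ref{C.B}), which only yields $L^1_{\mathrm{loc}}$ convergence, and you then have to upgrade to uniform convergence at the very end. The place where you redo work is telling: in that last step you yourself invoke ``interior H\"older bounds'' to get equicontinuity away from $0$ --- but that very regularity (extended to the boundary by~\cite{CS10}) is all the paper needs from the start, making the energy bound, Lemma~\ref{C.B}, and the final ``upgrade to uniform'' contradiction argument all superfluous. Your parenthetical remark that monotonicity of $v_{[0,R]}$ (Corollary~\ref{lem_momo}) would trivialize the last step is accurate and would be the cleaner of the two endgames you propose, but neither is needed if one starts from the uniform modulus of continuity. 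In short: the central mechanism (limit solves the equation globally by almost-minimality, is $-1$ on $(-\infty,0]$, hence is identically $-1$ by sign of the nonlocal integrand) is exactly the paper's; the surrounding compactness scaffolding is heavier than necessary.
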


\begin{proof}
By minimality, the function $v_{[0,R]}$ is a solution of
$$
-(-\Delta)^s v_{[0,R]}(x)=W'(v_{[0,R]}(x)) \ \ \text{for any} \ x\in [0,R].
$$
Then, in view of Lemma~{\ref{lem_co}} and~\cite[Theorem 3.3]{CS10}, 
$v_{[0,R]}$ is uniformly continuous on the whole of $\R$ with modulus of 
continuity bounded independently of $R$ (recall the
footnote on 
page~\pageref{nota}). Hence, there exists a function $v$ such that, up to subsequences, $v_{[0,R]}\to v$ as $R\to+\infty$ locally uniformly in~$\R$. 
Moreover, by taking into account Lemma~\ref{lem_limiti}, the limit function $v$ satisfies
\begin{equation}\label{00}
-(-\Delta)^sv(x)= W'(v(x)) \ \ \ \text{for any} \ x \in [0,\infty).
\end{equation}
Also,
\begin{equation}\label{000}
\dys v(x) = -1 \ \ \text{for any} \ x \in (-\infty, 0].
\end{equation}
\vspace{2mm}

We claim that actually
\begin{equation}\label{MIN}
{\mbox{$v$ is a minimizer in the whole of $\R$,}}\end{equation}
i.e., for any $\ell>0$, 
$$
\FF(v, [-\ell, \ell]) \, \leq \, \FF(v+\phi, [-\ell, \ell])
$$
for any perturbation~$\phi$ supported in $[-\ell,\ell]$.
\vspace{1mm}

This fact follows by~Corollary~\ref{cor_alfa}. Indeed, we notice that
$$
\frac{|v_{[0,R]}(x)-v_{[0,R]}(y)|^2}{|x-y|^{1+2s}}\,
dx \, \leq \, 
\frac{4}{|x-y|^{1+2s}}
$$
and we have
\begin{eqnarray*}
&& \int\!\!\int_{[-\ell,\ell]\times(\CC[-2\ell,2\ell])}
\frac{4}{|x-y|^{1+2s}}\,dx\,dy \\
&& \qquad\qquad \qquad \qquad \qquad = \begin{cases}
\dys \frac{4}{s(1-2s)}\left[(3\ell)^{1-2s}-\ell^{1-2s}\right] & s\neq 1/2, \\
8\ln{3} & s=1/2
\end{cases}
 \, <  +\infty.
\end{eqnarray*}
Then the function $\dys \frac{4}{|x-y|^{1+2s}}$ belongs to $L^1\big( [-\ell,\ell]\times(\CC[-2\ell,2\ell])\big)$
and so, by the Dominated Convergence Theorem
\begin{eqnarray*}
&&\lim_{R\rightarrow+\infty}
\int\!\!\int_{[-\ell,\ell]\times(\CC[-2\ell,2\ell])}\!
\frac{|v_{[0,R]}(x)-v_{[0,R]}(y)|^2}{|x-y|^{1+2s}}\, 
dx\,dy\\
&&\qquad \qquad \qquad\qquad\qquad \qquad\qquad \ =
\int\!\!\int_{[-\ell,\ell]\times(\CC[-2\ell,2\ell])}\!
\frac{|v(x)-v(y)|^2}{|x-y|^{1+2s}}\,.\end{eqnarray*}
This and the uniform convergence of $v_{[0,R]}$ to $v$ imply
that
\begin{eqnarray*} && 
\lim_{R\to+\infty}\frac{1}{2}\!\int\!\!\int_{[-\ell,\ell]\times[-\ell,\ell]}\!\frac{|v_{[0,R]}(x)-v_{[0,R]}(y)|^2}{|x-y|^{1+2s}}\, 
dx\, dy \\
&& \qquad \qquad \qquad \qquad   \qquad \qquad \qquad
= \frac{1}{2}\!\int\!\!\int_{[-\ell,\ell]\times[-\ell,\ell]}\!\frac{|v(x)-v(y)|^2}{|x-y|^{1+2s}}\, 
dx\, dy\,,\\
\\
&&
\lim_{R\to+\infty}\int\!\!\int_{[-\ell,\ell]\times(\CC[-\ell,\ell])}\!\frac{|v_{[0,R]}(x)-v_{[0,R]}(y)|^2}{|x-y|^{1+2s}}\, 
dx\, dy \\
&& \qquad \qquad \qquad \qquad \qquad \qquad \qquad
= \int\!\!\int_{[-\ell,\ell]\times(\CC[-\ell,\ell])}\!\frac{|v(x)-v(y)|^2}{|x-y|^{1+2s}}\, 
dx\, dy \\ && \\ {\mbox{and }}&& 
\lim_{R\to+\infty}\int_{-\ell}^{\ell}W(v_{[0,R]}(x))\, dx \, =\, 
\int_{-\ell}^{\ell}W(v(x))\, dx.\end{eqnarray*} This implies that $$ 
\lim_{R\to+\infty} \FF(v_{[0,R]},[-\ell,\ell]) \, =\, \FF(v,[-\ell,\ell]) 
$$ and the same holds for the function~$v_{[0,R]}+\phi$,
i.e.,
$$
\lim_{R\to+\infty} \FF(v_{[0,R]}+\phi,[-\ell,\ell]) \, =\, \FF(v+\phi,[-\ell,\ell]).
$$
Then, by taking the limit as $R\to+\infty$ in~\eqref{eq_sul}, the claim 
in~\eqref{MIN} plainly follows.
\vspace{1mm}

Finally, by~\eqref{MIN} we see that~\eqref{00} holds for any $x\in\R$. This and~\eqref{000} yield that the function~$v$ is identically $-1$.\qed
\end{proof}
\vspace{1mm}

Next results may be seen as energy decreasing rearrangements with more elementary techniques with
respect to the ones in~\cite{garsia} (see also~\cite{Ba94} for more general integral inequalities).

\begin{lemma}\label{REA}
Given a measurable set~$\Omega$ and two measurable functions~$u$, 
$v:\Omega\rightarrow\R$. Then
\begin{equation}\label{ovi}
\FF(\min\{u,v\},\Omega) + \FF(\max\{u,v\},\Omega) \, \leq \, \FF(u,\Omega)+\FF(v,\Omega)
\end{equation}
and the equality holds if and only if
\begin{equation}\label{ovi2}
u(x) \leq v(x) \ \ \text{or} \ \ v(x) \leq u(x) \ \ \text{for any} \ x \in \Omega.
\end{equation}
\end{lemma}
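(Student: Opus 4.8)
The plan is to reduce \eqref{ovi} to a single pointwise inequality in four real variables and then integrate it against the kernel; the potential term will contribute an equality for free. I would start by noting that for every $x$ the unordered pair $\{\min\{u,v\}(x),\max\{u,v\}(x)\}$ is just $\{u(x),v(x)\}$, so
\begin{equation*}
W(\min\{u,v\}(x))+W(\max\{u,v\}(x))=W(u(x))+W(v(x))
\end{equation*}
for every $x\in\Omega$; hence the potential parts of the two sides of \eqref{ovi} are identical, and the whole statement reduces to the analogous claim for the Gagliardo term $\KK$.

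The heart of the matter is the elementary inequality: for all real $a,b,c,d$,
\begin{equation*}
\big(\min\{a,c\}-\min\{b,d\}\big)^{2}+\big(\max\{a,c\}-\max\{b,d\}\big)^{2}\le (a-b)^{2}+(c-d)^{2},
\end{equation*}
with equality if and only if $(a-c)(b-d)\ge 0$. To prove it I would set $p:=\min\{a,c\}-\min\{b,d\}$, $q:=\max\{a,c\}-\max\{b,d\}$, $r:=a-b$, $s:=c-d$, and use $\min\{a,c\}+\max\{a,c\}=a+c$ (and likewise for $b,d$) to get $p+q=r+s$. By the identity $2(\xi^{2}+\eta^{2})=(\xi+\eta)^{2}+(\xi-\eta)^{2}$ the inequality is then equivalent to $|p-q|\le|r-s|$. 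Since $p-q=|b-d|-|a-c|$ and $r-s=(a-c)-(b-d)$, this is exactly the reverse triangle inequality $\big||a-c|-|b-d|\big|\le\big|(a-c)-(b-d)\big|$, whose equality case is precisely that $a-c$ and $b-d$ have the same sign (one possibly zero), i.e. $(a-c)(b-d)\ge0$.

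I would then apply this pointwise with $a=u(x)$, $c=v(x)$, $b=u(y)$, $d=v(y)$: dividing by $|x-y|^{n+2s}$ and integrating the resulting inequality over $\Omega\times\Omega$ (with the factor $1/2$) and over $\Omega\times\CC\Omega$ — the two pieces of \eqref{kinetic} — yields $\KK(\min\{u,v\},\Omega)+\KK(\max\{u,v\},\Omega)\le \KK(u,\Omega)+\KK(v,\Omega)$, and adding the equal potential terms gives \eqref{ovi}.

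For the equality case, assume the right-hand side of \eqref{ovi} is finite. Then equality forces the (nonnegative) difference of the integrands to vanish a.e.; since the kernel is strictly positive off the diagonal, the equality clause of the elementary inequality gives $(u(x)-v(x))(u(y)-v(y))\ge0$ for a.e. $(x,y)$ in $\Omega\times\Omega$. Setting $A=\{x\in\Omega:u(x)>v(x)\}$ and $B=\{x\in\Omega:u(x)<v(x)\}$, this says $A\times B$ has measure zero in $\R^{n}\times\R^{n}$; since $|A\times B|=|A|\,|B|$ by Fubini, either $|A|=0$ or $|B|=0$, i.e. $u\le v$ a.e. on $\Omega$ or $v\le u$ a.e. on $\Omega$, which is \eqref{ovi2}. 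Conversely, if (say) $u\le v$ on $\Omega$, then $\min\{u,v\}$ and $\max\{u,v\}$ agree (a.e.) with $u$ and $v$, so \eqref{ovi} is trivially an equality. There is no serious obstacle in this argument: the only slightly non-obvious point is the reduction of the pointwise inequality to the reverse triangle inequality; the measurability of $\min$ and $\max$, the interchange of finite sums with the integrals, and the Fubini step in the equality analysis are all routine.
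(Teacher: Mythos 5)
Your proof is correct and follows essentially the same route as the paper: both reduce \eqref{ovi} to the pointwise four-variable inequality (here the paper's display followed by \eqref{S002a}), which the paper declares ``straightforward to check'' and leaves to the reader, while you supply those details cleanly via $p+q=r+s$ and the reverse triangle inequality. Your treatment of the equality case, using Fubini on $A\times B$ to conclude $|A|\,|B|=0$, is a tidy way to pass from the pointwise condition \eqref{S002a} to the global alternative \eqref{ovi2}, and the observation that the potential terms contribute an identity is exactly what makes the reduction to the kinetic part work.
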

\begin{proof}
Denote by
\begin{equation}\label{M m}
m(x):=\min\{u(x),v(x)\}\; {\mbox{
and }}\;
M(x):=\max\{u(x),v(x)\}.\end{equation}
Then, we may deduce the claim in~\eqref{ovi}-\eqref{ovi2} by the following fact. For any~$x$, $y\in\Omega$,
\begin{equation*}
|m(x)-m(y)|^2+|M(x)-M(y)|^2 \, \le \, |u(x)-u(y)|^2+|v(x)-v(y)|^2
\end{equation*}
and if equality holds then
\begin{equation}\label{S002a}
\big(u(x)-v(x)\big)\big(u(y)-v(y)\big)\ge0.
\end{equation}
This is straightforward to check and we leave the details to the reader.\qed
\end{proof}

\vspace{1mm}

\begin{corollary}\label{lem_momo}
Let the notation of~Lemma~\ref{lem_EN-1} hold. Then, $v_I$
is non-decreasing.
\end{corollary}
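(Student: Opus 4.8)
The plan is to play $v_I$ off against its own translates through the rearrangement inequality of Lemma~\ref{REA}. Fix $\tau\in(0,|I|)$, set $w(x):=v_I(x-\tau)$, $m:=\min\{v_I,w\}$ and $M:=\max\{v_I,w\}$, so that $m+M=v_I+w$ pointwise. Since $v_I\equiv-1$ on $(-\infty,a]$ and $v_I\equiv+1$ on $[b,\infty)$ (and correspondingly $w\equiv-1$ on $(-\infty,a+\tau]$, $w\equiv+1$ on $[b+\tau,\infty)$), a direct inspection gives $M=v_I$ on $\CC I$ and $m=w$ on $\CC(I+\tau)$.

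The first real step is to bring the two minimality properties onto a common domain. Splitting the double integrals, one checks that for every interval $\Omega\supseteq I$ the difference $\FF(\cdot,\Omega)-\FF(\cdot,I)$ depends only on the values of its argument on $\CC I$; since $M=v_I$ on $\CC I$, the minimality of $v_I$ on $I$ then yields $\FF(v_I,[a,b+\tau])\le\FF(M,[a,b+\tau])$. By translation invariance of $\FF$, the function $w$ is a minimizer on $I+\tau$, so in the same way $\FF(w,[a,b+\tau])\le\FF(m,[a,b+\tau])$. All four quantities are finite: $\FF(v_I,I)<+\infty$ by Lemma~\ref{lem_EN-1}, the extra contributions over $[a,b+\tau]\setminus I$ (resp. over $[a,b+\tau]\setminus(I+\tau)$) are finite because $v_I$ is locally constant there, and then $\FF(m,\cdot)$, $\FF(M,\cdot)$ are dominated via Lemma~\ref{REA}. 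Adding the two inequalities and comparing with Lemma~\ref{REA} applied on $\Omega=[a,b+\tau]$ forces equality in~\eqref{ovi}; hence, by the equality case (see~\eqref{ovi2} and~\eqref{S002a}), $\big(v_I(x)-w(x)\big)\big(v_I(y)-w(y)\big)\ge0$ for a.e. $x,y\in[a,b+\tau]$, i.e. $v_I-w$ has a.e. constant sign on $[a,b+\tau]$.

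It remains to pin down this sign, and here I would use the strong maximum principle: arguing exactly as for~\eqref{90198} in the proof of Theorem~\ref{pro_min} (using $W'(\pm1)=0$ and the Euler--Lagrange equation satisfied by the minimizer), one obtains $-1<v_I<1$ on the open interval $(a,b)$. On $(a,a+\tau)\subset(a,b)$ we have $w=v_I(\cdot-\tau)\equiv-1<v_I$, so $v_I-w>0$ on a set of positive measure and the constant sign above must be nonnegative: $v_I(x)\ge v_I(x-\tau)$ for a.e. $x$. As $\tau\in(0,|I|)$ was arbitrary and such shifts can be composed, $v_I(x)\ge v_I(x-\sigma)$ a.e. for every $\sigma>0$; since $v_I$ is continuous (it solves the Euler--Lagrange equation in the interior of $I$ and is constant on each component of $\CC I$), this means $v_I$ is non-decreasing.

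The genuinely routine parts are the boundary bookkeeping and the finiteness checks; the one point deserving attention is the passage to the single interval $[a,b+\tau]$, needed precisely because the minimality of $v_I$ and that of its translate $w$ refer a priori to the distinct intervals $I$ and $I+\tau$. (If one prefers to avoid the maximum principle, the sign can instead be fixed by iterating $v_I(x)\le v_I(x-\tau)$ along an arithmetic progression that overshoots $b$, which would force $v_I$ to equal both $+1$ and $-1$ a.e. on $(a,a+\tau)$, a contradiction.)
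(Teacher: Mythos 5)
Your proof is correct and follows essentially the same route as the paper's: you translate $v_I$, compare on a common enlarged interval using the observation that $\FF(\cdot,\Omega)-\FF(\cdot,I)$ depends only on values on $\CC I$ (the paper's identity~\eqref{outside}, just stated differently), invoke the minimality of both $v_I$ and its translate, and detect equality in Lemma~\ref{REA} to conclude $v_I-w$ has constant sign. The only cosmetic difference is that you shift to the right ($w=v_I(\cdot-\tau)$, enlarged interval $[a,b+\tau]$) while the paper shifts to the left ($v=v_I(\cdot+\tau)$, enlarged interval $[a-\tau,b]$) — a pure mirror image. One small remark: the invocation of the strong maximum principle (the analogue of~\eqref{90198}) to show $v_I>-1$ on $(a,b)$ is more machinery than needed to pin down the sign; the iteration argument you mention parenthetically, or simply the observation that a non-increasing $v_I$ with $v_I\equiv-1$ on $(-\infty,a]$ and $v_I\equiv+1$ on $[b,\infty)$ is impossible, already settles it (which is how the paper implicitly reads it when it passes from ``monotone'' to the Corollary's conclusion).
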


\begin{proof} First, we remark that for any function $w$ and $z$ such that $w=z$ outside $\Omega'\subseteq\Omega$, we have
\begin{equation}\label{outside}
\FF(w,\Omega)-\FF(z,\Omega) \, = \, \FF(w,\Omega')-\FF(z,\Omega').
\end{equation}

Now, for any~$\tau>0$, we let~
$$
u(x):=v_I(x) \ \ \text{and} \ \   v(x):=v_I(x+\tau)
$$
and we recall the setting in~\eqref{M m}.
We have that
\begin{equation}\label{CO1}{\mbox{
$M(x)=-1$ if~$x\le a-\tau$
and~$M(x)=+1$ if~$x\ge b-\tau$.}}
\end{equation}
Therefore, we can apply~\eqref{outside}, with $w:=M$ and $z:=v$ in $\Omega':=[a-\tau, b-\tau]\subseteq\Omega:=[a-\tau,b]$, and we get
\begin{eqnarray}\label{mono1}
\dys
&& \FF(M,[a-\tau,b])-\FF(v,[a-\tau,b]) \nonumber \\
&& \qquad \qquad \qquad \ =  \FF(M,[a-\tau,b-\tau]) -\FF(v,[a-\tau, b-\tau]) \geq 0,
\end{eqnarray}
where we also used the minimality of $v_I$.
\vspace{1mm}

Analogously, since
\begin{equation*}
{\mbox{
$m(x)=-1$ if~$x\le a$
and~$m(x)=+1$ if~$x\ge b$,}}
\end{equation*}
we get
\begin{eqnarray}\label{mono2}
\dys
\FF(m,[a-\tau,b])-\FF(u,[a-\tau,b]) \, \geq \, 0.
\end{eqnarray}
Consequently, by~\eqref{mono1} and~\eqref{mono2}, we conclude that
\begin{equation*}
\dys
\FF(m,[a-\tau,b])+\FF(M,[a-\tau,b]) \geq \FF(u,[a-\tau,b]) -\FF(v,[a-\tau, b).
\end{equation*}
Since we know that the reverse inequality holds true as well,
due to Lemma~\ref{REA}, we obtain that
$$ \FF(m,[a-\tau,b])+\FF(M,[a-\tau,b])
= \FF(u,[a-\tau,b])+\FF(v,[a-\tau,b]).$$
Therefore, by~\eqref{ovi2}, we have that~$u-v$ does not
change sign, hence~$v_I$ is monotone.\qed
\end{proof}

\vspace{3mm}
\section{The $1$-D minimizer - Proof of
Theorem~\ref{theorem_unod}}\label{sec_minimi}

We are ready to deal with the 1-D minimizers (for related observations 
when $s\in (1/2,\,1)$ see~\cite{garroni} and~\cite{gonzalez}).
\vspace{1mm}

\begin{proof}[Proof of Theorem~\ref{theorem_unod}]
For the sake of simplicity, we define the auxiliary set of functions $\mathcal{M}$ in $\mathcal{X}$ as follows
\begin{eqnarray}\label{def_m}
& {\mathcal{M}}  = \Big\{
u\in {\mathcal{X}} \st
{\mathcal{G}}(u)<+\infty \aand \FF (u,[-a,a])\le \FF(u+\phi,[-a,a]) \nonumber \\
&\quad \quad \ {\mbox{for any $a>0$ and
any $\phi$
measurable and supported in $[-a,a]$
}}\Big\}
\end{eqnarray}
and we divide the proof in few steps.

\noindent
\\ {\it Step 1.} Claim: the set~${\mathcal{M}}$ is non-empty.
\vspace{1mm}

We will prove this claim by taking the limit of a suitable sequence of functions in $\XX$.

\vspace{1mm}

For any $K>2$, we may use Lemma~\ref{lem_EN-1} with $a=-K$ and $b=K$ and we obtain a minimizer $v_{[-K,K]}:\R\to [-1,1]$ such that $v_{[-K,K]}(x)=-1$ if $x\leq 0$ and $v_{[-K,K]}(x)=+1$ if $x\geq K$. Also,
\begin{equation}\label{eq_TA}
\FF(v_{[-K,K]},[-K,K]) \leq 
\begin{cases}
C_s\big(1+(2K)^{1-2s}\big) & \text{if} \ s \in (0,\,1/2), \\
C_s\big(1+\log{(2K)}\big) & \text{if} \ s=1/2, \\
C_s & \text{if} \ s\in (1/2,\,1),
\end{cases}
\end{equation}
for a suitable constant $C_s>0$.

Also, we recall that, in view of~Corollary~\ref{lem_momo}, the function $v_{[-K,K]}$ is monotone non-decreasing.

\vspace{1mm}

The minimization property of~$v_{[-K,K]}$ yields that
\begin{eqnarray}\label{89865fffqyyqyq}
\int_{\R}\frac{v_{[-K,K]}(y)-v_{[-K,K]}(x)}{|x-y|^{1+2s}}\,dy & = &
-(-\Delta)^s 
v_{[-K,K]}(x) \nonumber \\
& = &  W'(v_{[-K,K]}(x)), 
\ \ \forall x\in [-K,K],
\end{eqnarray}
and so, by Lemma \ref{lem_co} and~\cite[Theorem 3.3]{CS10}, we have that~$v_{[-K,K]}$ is continuous, 
with modulus of continuity
bounded independently of~$K$.
\vspace{1mm}

Now, we fix a point~$c_o\in (-1,1)$
such that
\begin{equation}\label{co8d}
W'(c_o)\neq0.
\end{equation}
By continuity, there 
must be a point~$p_K\in[-K,+K]$
such that~$v_{[-K,K]}(p_K)=c_o$.

We claim that
\begin{equation}\label{bar}
\lim_{K\rightarrow+\infty} K-|p_K|=+\infty.
\end{equation}
To check this, we suppose by contradiction that there exists a constant $C>0$ such that $|p_K+K|\leq C$ for infinitely many $K$'s. We denote by $\dys p=\lim_{K\to +\infty} \left(p_K+K\right)$ and we consider the function $v_{[0,2K]}(x)=v_{[-K,K]}(x-K)$.
\vspace{1mm}

Notice that, according to Corollary~\ref{cor_vmeno}, $v_{[0,2K]}$ converges locally uniformly to $-1$ as $K\to+\infty$. 
Besides, for any  $x>p$, we have
$$
v_{[0,2K]}(x) \,=\, v_{[-K,K]}(x-K) \, \geq \, v_{[-K,K]}(p_K) \, =\,  c_o, \ \ \ (\text{for large} \ K)
$$
that implies
$$
\dys \lim_{K\to+\infty}v_{[0,2K]}(x) \, \geq\, c_o\, >\, -1
$$
and thus we get a contradiction. 
This proves~\eqref{bar}.

\vspace{2mm}

Now, we set
$$
u_K(x):=v_{[-K,K]}(x+p_K),
$$ 
so~$u_K(0)=c_o$. As a consequence,
we may suppose that~$u_K$ converges locally uniformly to 
some~$u_*
\in C(\R;[-1,+1])$, with
\begin{equation}\label{co9d}
u_*(0)=c_o
\end{equation}
and
\begin{equation}\label{19011}
{\mbox{
$u_*$
is non-decreasing.}}\end{equation}
By~\eqref{89865fffqyyqyq} and Lemma~\ref{lem_limiti},
\begin{equation}\label{os11}
{\mbox{$(-\Delta)^s u_*(x)+W'(u_*(x))=0$ \ for any $x\in\R$.}}
\end{equation}
This and Lemma \ref{lem_dif} imply that~$u_*\in C^2(\R)$. 
{F}rom~\eqref{19011}, we already know that~$u_*'\ge 0$, and then, by arguing as in~Remark~\ref{rem_strict}, one can prove that
\begin{equation}\label{str}
{\mbox{$u_*'(x)>0$
for any $x\in\R.$}}
\end{equation}

\vspace{1mm}

Now, we prove that
\begin{equation}\label{os23} {\mathcal{G}}(u_*)<+\infty.
\end{equation}
Indeed, by~\eqref{eq_TA}, we get
\begin{eqnarray*}
\FF(u_K, [p_K-K,p_K+K])  & = &
\FF(v_{[-K,K]},[-K,+K])
\\ & \le & 
\begin{cases}
C_s\big(1+
(2K)^{1-2s}\big) &\text{if} \ s\in(0,\,1/2), \\
C_s\big(1+
\log{(2K)}\big) &\text{if} \ s=1/2, \\
C_s &\text{if} \ s\in(1/2,\, 1), \\
\end{cases}
\end{eqnarray*}
This, \eqref{bar} and Fatou Lemma imply~\eqref{os23}.
\vspace{2mm}

Moreover, $u_{*}$ is such that
\begin{equation}\label{os12}
\lim_{x\rightarrow\pm \infty} u_*(x)=\pm1.
\end{equation}
We can prove~\eqref{os12} arguing by contradiction. By~\eqref{str},
we know that there exists~$a_-$, $a_+$ such that
$$ -1\le a_-<a_+\le+1$$
and
$$ \lim_{x\rightarrow \infty} u_*(x)=a_\pm.$$
Let us show that~$a_-=-1$. Suppose, by contradiction, that
\begin{equation}\label{55677}
a_->-1.\end{equation}
Then, we set~$a_*:=(a_-+a_+)/2\in(-1,a_+)$ and we infer from~\eqref{55677}
that 
$$ i:=\inf_{[a_*,a_+]} W>0.$$
Recalling~\eqref{19011}, 
we have that there exists~$\kappa\in\R$ such that, if~$x\ge \kappa$,
then~$u_*(x)\in [a_*,a_+]$.
So,
from~\eqref{os23},
\begin{eqnarray*}
+\infty  & > &  {\mathcal{G}}(u_*) \\
& \ge   &  \lim_{R\rightarrow+\infty}\begin{cases}\dys  R^{2s-1}\int_{\kappa}^{R} W(u_*)\,dx & \text{if} \ s\in (0,\,1/2)\\
\dys (\log R)^{-1}\int_{\kappa}^{R} W(u_*)\,dx & \text{if} \ s =1/2\\
\dys \int_{\kappa}^{R} W(u_*)\,dx & \text{if} \ s\in (1/2,\,1)
\end{cases}\\
&  \ge &    \lim_{R\rightarrow+\infty}i\,(R-\kappa)\begin{cases}\dys R^{2s-1}& \text{if} \ s\in (0,\,1/2)\\
\dys (\log R)^{-1} & \text{if} \ s =1/2\\
1 & \text{if} \ s\in (1/2,\,1)
\end{cases} \ = \  +\infty,
\end{eqnarray*}
and this contradiction proves
that~$a_-=-1$. Analogously, one proves that~$a_+=+1$.
This finishes the proof of~\eqref{os12}.
\vspace{1mm}
\vspace{1mm}

By~\eqref{os12} and Theorem~\ref{pro_min},
we obtain that
\begin{equation}\label{os33}
\begin{split}&
\FF (u_*,[-a,a])\le \FF(u_*+\phi,[-a,a]) \\
&\qquad \qquad {\mbox{ for any $a>0$ and
any $\phi$
measurable and supported in $[-a,a]$
.}}\end{split}\end{equation}

By collecting the results in~\eqref{os23},
\eqref{os12} and~\eqref{os33}, we obtain that the set $\MM$ is not empty.

\vspace{1mm}

Now, for any~$x_o\in\R$, define the set
\begin{equation}\label{def_mx}
{\mathcal{M}}^{(x_o)}:=\big\{
u\in {\mathcal{M}} \st x_o=\sup\{t\in\R\st u(t)<0\}\,\big\}.
\end{equation}

\vspace{2mm}

\noindent
{\it Step 2.} Claim: the set $\MM^{(x_o)}$ consists of only one element, which will be denoted by $u^{(x_o)}$, and $u^{(x_o)}(x)=u^{(0)}(x-x_o)$. 
\vspace{1mm}

Now, we prove that there exists~$x_*\in\R$ such that
\begin{equation}\label{albo}
{\mbox{${\mathcal{M}}^{(x_*)}$ has only one 
element.}} \end{equation}
For this, we consider the previously constructed minimizer~$u_*$
and we take~$x_*\in\R$ such that ~$u_*\in {\mathcal{M}}^{(x_*)}$.
Let us take $u\in {\mathcal{M}}^{(x_*)}$. By cutting at the levels~$\pm 
1$,
we see that~$|u|\le 1$. Thus, for any fixed~$\eps>0$, there 
exists~$k(\eps)\in\R$ such that, for~$k\in(-\infty,\, k(\eps)]$, 
we have 
$$
u(x-k)+\eps>u_*(x) \ \ \text{for any} \ x\in\R.
$$
Now we take~$k$ as large as possible with the above property; that is,
we take~$k_\eps$ such that
\begin{equation}\label{KKS}
u(x-k_\eps)+\eps\ge u_*(x)\end{equation}
for any~$x\in\R^n$ and, for any~$j\ge 1$ there exist
a sequence~$\eta_{j,\eps}\ge0$ and points~$x_{j,\eps}\in\R$
such that
$$\lim_{j\rightarrow+\infty}\eta_{j,\eps}\,=\,0$$
and~$u(x_{j,\eps}-(k_\eps+\eta_{j,\eps}) )+\eps\le u_*(x_{j,\eps})$.

We observe that~$x_{j,\eps}$ must be a bounded sequence in~$j$. Otherwise,
if
$$ \lim_{j\rightarrow+\infty} x_{j,\eps}=\pm\infty,$$
then
$$ \pm1+\eps\,  =\,  \lim_{j\rightarrow+\infty}
u(x_{j,\eps}-(k_\eps+\eta_{j,\eps}))+\eps\,  \le \,  
\lim_{j\rightarrow+\infty} u_*(x_{j,\eps})\,  =\,  \pm1,$$
which is a contradiction.

Therefore, we may suppose that
$$ \lim_{j\rightarrow+\infty} x_{j,\eps}=x_\eps,$$
for some~$x_\eps\in \R$.
By~\eqref{os11} and by Lemma \ref{lem_co}, we know that~$u$ and~$u_*$ are continuous (recall \ref{5651}),
therefore
\begin{equation}\label{STO}
u(x_{\eps}-k_\eps)+\eps = u_*(x_{\eps}).\end{equation}
Thus, if we set
$$
u_\eps(x):=u(x-k_\eps)+\eps,
$$
we have that~$u_\eps\ge 
u_*$, $u_\eps(x_\eps)=u_*(x_\eps)$ and, by~\eqref{os11},
$$ 
-(-\Delta)^s
u_\eps (x) \,  = \,  -(-\Delta)^s u(x-k_\eps) \, =  \, 
W'(u(x-k_\eps)) \, = \, W'(u_\eps(x)-\eps).$$
Consequently,
\begin{eqnarray}\label{STI}
 0  & \le & 
\int_\R
\frac{
(u_\eps-u_*) (y)
}{|x_\eps-y|^{1+2s}}\,dy \ = \ -(-\Delta)^s (
u_\eps-u_*)(x_\eps) \nonumber 
\\ \nonumber
\\ &= &  W'(u_*(x_\eps)-\eps)-W'(u_*(x_\eps)).
\end{eqnarray}
\vspace{1mm}

Now, we claim that
\begin{equation}\label{BB1}{\mbox{
$|x_\eps|$ is bounded.}}\end{equation}
Indeed, suppose that, for some subsequence,
$$
\lim_{\eps\rightarrow0^+}|x_\eps|=+\infty.
$$
Then,
\begin{equation}\label{eq_limu}
\lim_{\eps\rightarrow0^+} u_*(x_\eps)=\pm 1.
\end{equation}
By taking into account hypothesis \eqref{CON6} on the potential $W$, we have that
\begin{equation}\label{grow33}
{\mbox{$W'(t)\ge W'(r)+c(t-r)$ when
$r\le t$, $r,t\in [-1,\,-1+c]\cup[+1-c,\,+1]$,
}}
\end{equation}for some~$c>0$.
\vspace{1mm}
Then, by \eqref{eq_limu} there exists~$\eps_o>0$ such that
both~$u_*(x_\eps)$ 
and~$u_*(x_\eps)-\eps$
belong, for~$\eps\in (0,\eps_o)$, to~$[-1,\,-1+c]\cup[+1-c,\,+1]$,
where~$c>0$ is the one given by~\eqref{grow33}.
It follows
$$
W'(u_*(x_\eps))\, \ge\,  W'(u_*(x_\eps)-\eps)+c\eps\, >\, 
W'(u_*(x_\eps)-\eps),
$$
and this is in contradiction with~\eqref{STI}. Thus~\eqref{BB1} is proved.

As a consequence, we may suppose, up to subsequences, that
\begin{equation}\label{STU}
\lim_{\eps\rightarrow0^+} x_\eps=x_o,
\end{equation}
for some~$x_o\in\R$.
\vspace{1mm}

We also have that
\begin{equation}\label{BB2}{\mbox{
$|k_\eps|$ is bounded.}}\end{equation}
Indeed, if
$$ \lim_{\eps\rightarrow0^+}k_\eps=\pm \infty,$$
we would obtain from~\eqref{STO} and~\eqref{STU} that
$$ \mp 1\,=\,
\lim_{\eps\rightarrow0^+}u(x_\eps-k_\eps)+\eps\,=\,
 \lim_{\eps\rightarrow0^+}u_*(x_\eps)\,=\, u_*(x_o),$$
and so, from~\eqref{os11},
$$0\, =\, W'(u_*(x_o))\, =\, -(-\Delta)^s u_*(x_o)\, =\, 
\int_\R\frac{u(y)\pm1}{|x_o-y|^{1+2s}}\,dy.$$
Since the integrand is either non-negative or non-positive,
it follows that~$u_*$ is identically equal to~$\pm1$,
which is a contradiction. This proves~\eqref{BB2}.
\vspace{1mm}

Accordingly, we may suppose that
$$ \lim_{\eps\rightarrow0^+} k_\eps=k_o,$$
for some~$k_o\in\R$. Hence,
$$ \lim_{\eps\rightarrow0^+} (u_\eps-u_*)(y)=
\lim_{\eps\rightarrow0^+} u(y-k_\eps)+\eps-u_*(y)=
u(y-k_o)-u_*(y), \ \ \ \forall y \in \R,
$$
and so, passing to the limit in~\eqref{STI},
we conclude that
\begin{equation}\label{STK}
\int_\R
\frac{ 
u(y-k_o)-u_* (y)
}{|x_\eps-y|^{1+2s}}\,dy\,=\,0.
\end{equation}
On the other hand, by passing to the limit in~\eqref{KKS}, we see that~$
u(x-k_o)\ge u_*(x)$ for any~$x\in\R$, that is, the integrand 
in~\eqref{STK} is non-negative.
Consequently,
\begin{equation}\label{EEQ}
{\mbox{
$u_*(x)=u(x-k_o)$ for any~$x\in\R$.}}\end{equation}
\vspace{1mm}

We claim that
\begin{equation}\label{ko}
k_o=0.\end{equation}
To check this, we argue as follows.
Since~$u$ belongs to ${\mathcal{M}}^{(x_*)}$, we have that
$${\mbox{ if
$u(x)<0$ then $x\le x_*$,}}$$
and that
$$ {\mbox{there exists an infinitesimal
sequence~$\eps_j>0$ such that $u(x_*-\eps_j)<0.$}}$$
Hence, by~\eqref{EEQ},
\begin{equation}\label{87.1}
{\mbox{ if
$u_*(x)<0$ then $x\le x_*+k_o$}}\end{equation}
and
\begin{equation}\label{90.1} {\mbox{there exists an infinitesimal 
sequence~$\eps_j>0$ such that $u_*(x_*+k_o-\eps_j)<0.$}}
\end{equation}
On the other hand, since~$u_{*}\in{\mathcal{M}}^{(x_*)}$, we have that
\begin{equation}\label{90.2}{\mbox{ if
$u_*(x)<0$ then $x\le x_*$}}\end{equation}
and
\begin{equation}\label{87.2}
{\mbox{there exists an infinitesimal
sequence~$\delta_j>0$ such that $u_*(x_*-\delta_j)<0.$}}\end{equation}
By~\eqref{90.1} and~\eqref{90.2}, we have that~$x_*+k_o-\eps_j\le x_*$
and so, by passing to the limit,~$k_o\le 0$.
But, from~\eqref{87.1} and~\eqref{87.2}, we have that~$x_* -\delta_j\le 
x_*+k_o$,
that is, again by passing to the limit, 
$k_o \geq 0$.

The observations above prove~\eqref{ko}, that is $k_o=0$.
Then, from~\eqref{EEQ} and~\eqref{ko}, we have that~$u=u_*$,
and this proves~\eqref{albo}.
\vspace{2mm}

From \eqref{albo} we can easily deduce that the set $\mathcal{M}^{(x_o)}$ consists of only one element,  for any $x_o\in \R$.

Take any $u\in \mathcal{M}^{(x_o)}$ and set $\tilde{u}(x)=u(x+(x_{*}-x_o))$ for every $x\in \R$. Since such translate function $\tilde{u}$ belongs to $\mathcal{M}^{(x_*)}$, it follows that $\tilde{u}\equiv u_*$.
Accordingly, $u\in \mathcal{M}^{(x_o)}$ is such that $u(x)=u_*(x-(x_*-x_o))$; i.e., $\mathcal{M}^{(x_o)}$ consists of only one element. By the arbitrariness of $x_o\in \R$, the claim in Step~2 is proved.

\vspace{2mm}

\noindent
{\it Step 3.} Claim: $u^{(0)} \in C^2(\R)$ is such that $(u^{(0)})'(x)>0$ for any $x\in \R$ and $\MM^{(x_0)}\equiv\{u\in \MM \ \text{s.t.}\  u(x_o)=0\}$.
\vspace{1mm}

First, in view of~\eqref{os11} and the regularity assumptions on the function $W$, by Lemma~\ref{lem_dif} we can deduce that $u^{(0)}$ belongs to $C^{2}(\R)$.
Moreover, we know from the previous step that~${\mathcal{M}}^{(0)}$
only consists of one element and, in the proof of the claim in Step~1, we built one with positive
derivative (recall~\eqref{str}).
In particular such $u^{(0)}$ is continuous and strictly monotone increasing.
\vspace{2mm}

Finally, we observe that, in view of the previous steps, the minimum $u^{(0)}$ satisfies the hypothesis in~Proposition~\ref{pro_control} and then the estimates in~\eqref{sss} plainly follow.

\vspace{2mm}

The proof of Theorem~\ref{theorem_unod} is complete.\qed
\end{proof}

\vspace{1mm}

\begin{remark}{\it Existence of global minimizers in the case $s\in (1/2,\,1)$.}

We note that when $s\in (1/2,\,1)$ the functional $\GG$ coincides with $\FF$ on $\XX$. Hence, in view of Theorem~\ref{pro_min} and the fact that global minimizers of $\FF$ are solutions of the equation (\ref{equazione}), we can provide an alternative proof of the existence result in~Theorem~\ref{theorem_unod}, by  showing the existence of a monotone global minimizer which satisfies the limit condition~\eqref{lim56}. We will prove that the following infimum 
\begin{eqnarray}\label{profile}
\gamma_1 :=\inf\left\{\GG(v), \ v:\R\to\R \, \ \text{s.t.} \, \lim_{x\to\pm\infty}v(x)=\pm1 \right\}\,
\end{eqnarray}
is achieved by an non-decreasing function.

The key of the proof is given by the fact that the energy functional $\GG$ is decreasing with respect to monotone rearrangements. The proof is adapted from \cite[Theorem 2.4]{alberti}, in which the authors deals with a nonlocal functional deriving from Ising spin systems.
\vspace{1mm}

First, we recall that the energy $\GG$ is also decreasing under truncations by $-1$ and $+1$ and then it is not restrictive to minimize the problem (\ref{profile}) with the additional condition $|u|\leq 1$.
\vspace{1mm}

We denote by $X$ the class of all $v:\R\to[\-1,1]$ such that $\dys \lim_{x\to\pm\infty}v(x)=\pm1$; we denote by $X^{\star}$ the class of $v\in X$ such that $v$ is non-decreasing and $\dys v(0)=0$.
\vspace{1mm}

We claim that the infimum of $\GG$ on $X$ is equal to the infimum of $\GG$ on $X^{\star}$.
\vspace{1mm}

In fact, since $X^{\star}\subset X$ we have $\dys \inf_{v\in X^{\star}}\GG(v)\geq\inf_{v\in X}\GG(v)$, while the reverse inequality follows mainly by the fact that the singular perturbation term in the energy $\GG$ is decreasing under monotone rearrangements; see for instance~\cite[Theorem 9.2]{AL89} (and  also~\cite[Theorem~2.11]{alberti2000})
and \cite[Theorem I.1]{garsia} for monotonicity on the real line and on bounded intervals, respectively.\vspace{1mm}

Now, we are in position to show that the infimum of $\GG$ on $X^{\star}$ is achieved, by the direct method.
\vspace{1mm}

Take a minimizing sequence $(u_n)\subset X^{\star}$.
Since $u_n$ is non-decreasing and converging to $-1$ and $+1$ at $\pm\infty$, its distributional derivative $u'_n$ is a positive measure on $\R$ with $\|u'_n\|
=|Du_n(\R)|=2 
<+\infty, \ \forall n\in\N$.
Then there exist $u_\ast\in BV_{\text{loc}}(\R)$ and a subsequence $(u_{n_k})$ such that $u_{n_k}$ converges to $u_\ast$ almost everywhere as $k$ goes to $+\infty$ (see for instance \cite[Helly's First Theorem]{carothers}). By construction, $u$ is non-decreasing and satisfies 
$\dys u_\ast(x)=0$.
\vspace{1mm}

Let us show that $\dys \lim_{x\to\pm\infty}u_\ast(x)=\pm1$.

Since $u_\ast$ is non-decreasing in $[-1,1]$, there exist $a<0$ and $b>0$ such that
$$
\dys \lim_{x\to-\infty}u_\ast(x)=a \ \ \ \text{and} \ \ \ \lim_{x\to+\infty}u_\ast(x)=b.
$$
By contradiction, we assume that either $a\neq-1$ or $b\neq 1$. Then, since $W$ is continuous and strictly positive in $(-1,1)$, we obtain
$$
\dys \int_{\R}W(u_\ast)\,dx=+\infty.
$$
This is impossible, because, by Fatou's Lemma, we have
\begin{equation}\label{eq_fatou}
\dys \int_\R W(u)\,dx \, \leq\,  \liminf_{n\to+\infty}\int_{\R} W(u_n)\,dx \, \leq\,  \liminf_{n\to+\infty}\GG(u_{n})<+\infty.
\end{equation}
Hence, $u_\ast$ belongs to $X^{\star}$.

\vspace{1mm}

Finally, since $\GG$ is lower semicontinuous on sequences such that $u_n\to u_\ast$ pointwise,
the minimum problem $\gamma_1$ has a solution and this concludes the proof.
\vspace{2mm}

It is worth mentioning that an ulterior proof of the existence of minimizers for~\eqref{profile}
can be found in~\cite{garroni}, where it was studied the 1-D functional $\tilde{\FF}$ given by
$$
\dys \tilde{\FF}(u) = \int_\R\int_{\R}\frac{|u(x)-u(y)|^p}{|x-y|^p}dx\, dy+\int_{\R}W(u)\,dx, \ \ \ \ \ (p>2).
$$
Our case is analogous if we take $p=1+2s \in (2,3)$, since the exponent of the term ${|u(x)-u(y)|}$ does not play any special role in the proof (see~\cite[Proposition 3.3]{garroni}).
\end{remark}

\vspace{3mm}

\section{Extending the $1$-D minimizer to any
dimension - Proof of Theorem~\ref{theorem_0956}}\label{sd97qq1}

We start by proving the following lemma, which we will need 
in the proof of Theorem~\ref{theorem_0956}:

\begin{lemma}\label{lem_aux}
Let~$K\ge0$.
Let~$\alpha :\R\times [1,+\infty)\rightarrow [0,K]$
and~$\beta:\R\rightarrow [0,+\infty)$.
Suppose that~$\alpha(\cdot,R)$ is measurable for any fixed~$R\in
[1,+\infty)$ and that~$\beta$ is measurable.

Let also~$\lambda_R\in 
(0,+\infty)$ for any~$R\in [1,+\infty)$.

Assume that
\begin{eqnarray}
\label{EE0}
&& {\mbox{for any $\eta\in(0,1)$, }}\;
\lim_{R\rightarrow+\infty}\frac{\lambda_R}{\lambda_{\eta R}}=1,\\
\label{EE1}
&& {\mbox{for any $\eta\in(0,1)$, }}\;
\lim_{R\rightarrow+\infty} \sup_{|t|\le \eta R}\alpha(t,R)=0,\\
\label{EE2}
&& {\mbox{for any $R\ge K$, }}\;\lambda_R \int_{-R}^R \beta(t)\,dt
\le K,\\
{\mbox{and }}&& \label{EE3}
\liminf_{R\rightarrow+\infty}\lambda_R \int_{-R}^R \beta(t)\,dt=c,
\end{eqnarray}
for some~$c\in\R$. Then
$$ \liminf_{R\rightarrow+\infty}\lambda_R \int_{-R}^R
\alpha(t,R)\,\beta(t)\,dt=0.$$
\end{lemma}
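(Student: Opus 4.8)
The plan is to split the integral $\lambda_R\int_{-R}^R\alpha(t,R)\beta(t)\,dt$ into the central part $|t|\le \eta R$ and the outer annulus $\eta R<|t|\le R$, and to show that along a suitable sequence of $R$'s both contributions are as small as we like. For the central part, we use \eqref{EE1}: given $\epsilon>0$ we may choose $R$ large so that $\sup_{|t|\le\eta R}\alpha(t,R)\le\epsilon$, and then
$$
\lambda_R\int_{-\eta R}^{\eta R}\alpha(t,R)\beta(t)\,dt \;\le\; \epsilon\,\lambda_R\int_{-\eta R}^{\eta R}\beta(t)\,dt \;\le\; \epsilon\,\lambda_R\int_{-R}^R\beta(t)\,dt \;\le\; \epsilon K,
$$
where the last inequality is \eqref{EE2} (valid once $R\ge K$). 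This bound is uniform in $R$ once $R$ is large enough, so the central contribution is controlled by $\epsilon K$, which we will later let go to $0$.

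For the outer annulus, we bound $\alpha\le K$ pointwise and write
$$
\lambda_R\int_{\eta R\le|t|\le R}\alpha(t,R)\beta(t)\,dt \;\le\; K\,\lambda_R\int_{\eta R\le|t|\le R}\beta(t)\,dt \;=\; K\Big(\lambda_R\int_{-R}^R\beta(t)\,dt \;-\; \lambda_R\int_{-\eta R}^{\eta R}\beta(t)\,dt\Big).
$$
Now I would rewrite $\lambda_R\int_{-\eta R}^{\eta R}\beta = \dfrac{\lambda_R}{\lambda_{\eta R}}\Big(\lambda_{\eta R}\int_{-\eta R}^{\eta R}\beta\Big)$ and use \eqref{EE0} to replace $\lambda_R/\lambda_{\eta R}$ by $1$ in the limit. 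Here is the key point: by \eqref{EE3}, $g(R):=\lambda_R\int_{-R}^R\beta(t)\,dt$ has $\liminf_{R\to\infty}g(R)=c$, so there is a sequence $R_j\to\infty$ with $g(R_j)\to c$; along the \emph{same} sequence, $\eta R_j\to\infty$ as well, hence by \eqref{EE2} the quantity $g(\eta R_j)$ stays bounded, but it need not converge to $c$. To handle this I would instead pass to a further subsequence along which $g(\eta R_j)$ converges to some limit $c'\ge c$ (the inequality $c'\ge c$ holds because $c$ is the $\liminf$ of $g$). Then along this subsequence the outer contribution is at most $K(g(R_j)-g(\eta R_j)\cdot \lambda_{R_j}/\lambda_{\eta R_j})\to K(c-c')\le 0$, so it is $\le o(1)$; more precisely, its $\limsup$ along the subsequence is $\le K(c-c')\le 0$, hence the outer contribution tends to $0$ along this subsequence.

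Combining, along this subsequence $R_j$ we get
$$
\limsup_{j\to\infty}\;\lambda_{R_j}\int_{-R_j}^{R_j}\alpha(t,R_j)\beta(t)\,dt \;\le\; \epsilon K .
$$
Since the left-hand side does not depend on $\epsilon$ and $\epsilon>0$ was arbitrary, and since the integrand is nonnegative so the $\liminf$ over all $R$ is $\le$ the limit along any subsequence, we conclude $\liminf_{R\to+\infty}\lambda_R\int_{-R}^R\alpha(t,R)\beta(t)\,dt=0$, as claimed. The main obstacle is the bookkeeping with the two scales $R$ and $\eta R$: one must extract a single subsequence that simultaneously realizes the $\liminf$ of $g(R)$ and makes $g(\eta R)$ convergent, and then exploit that its limit is $\ge c$ to kill the annular term — this is where \eqref{EE0} (comparability of $\lambda_R$ and $\lambda_{\eta R}$) and \eqref{EE2} (uniform boundedness) are both essential. \qed
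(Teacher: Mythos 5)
Your proof is correct and follows essentially the same strategy as the paper: split the integral at $|t|=\eta R$, bound the central part by $\sup_{|t|\le\eta R}\alpha(\cdot,R)\cdot K$ via \eqref{EE1}--\eqref{EE2}, and show that the $\lambda_R$-weighted $\beta$-mass of the annulus $\{\eta R<|t|\le R\}$ has $\liminf$ equal to zero. The only stylistic difference is how this last fact is obtained: you extract nested subsequences so that $g(R_j)\to c$, $g(\eta R_j)\to c'$, and then nonnegativity of the annular mass together with $c'\ge c$ forces it to vanish along the subsequence, whereas the paper gets the same conclusion in one stroke from superadditivity of $\liminf$ applied to the decomposition $g(R)=\lambda_R\int_{-\eta R}^{\eta R}\beta+\lambda_R\int_{\{\eta R<|t|\le R\}}\beta$ together with \eqref{EE0}.
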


\begin{proof} We fix~$\eta\in(0,1)$
and we use~\eqref{EE3} and~\eqref{EE0} to see 
that
\begin{eqnarray*}
c&=&
\liminf_{R\rightarrow+\infty}\lambda_R \int_{-R}^R \beta(t)\,dt
\\ &=&
\liminf_{R\rightarrow+\infty}\left(\lambda_R \int_{-\eta R}^{\eta R} 
\beta(t)\,dt
+ \lambda_R \int_{\{\eta R<|t|\le R\}}
\beta(t)\,dt\right)
\\ &\ge&
\liminf_{R\rightarrow+\infty}\lambda_R \int_{-\eta R}^{\eta R}
\beta(t)\,dt
+ \liminf_{R\rightarrow+\infty} \lambda_R \int_{\{\eta R<|t|\le R\}}
\beta(t)\,dt
\\ &=&
\liminf_{R\rightarrow+\infty}\lambda_{\eta R} \int_{-\eta R}^{\eta R}
\beta(t)\,dt
+ \liminf_{R\rightarrow+\infty} \lambda_R \int_{\{\eta R<|t|\le R\}}
\beta(t)\,dt\\ &=&
c+ \liminf_{R\rightarrow+\infty} \lambda_R \int_{\{\eta R<|t|\le R\}}
\beta(t)\,dt.
\end{eqnarray*}
As a consequence, by simplifying~$c$,
$$ 0\ge \liminf_{R\rightarrow+\infty} \lambda_R \int_{\{\eta R<|t|\le
R\}}\beta(t)\,dt.$$
So, since the integrand is non-negative,
\begin{equation}\label{FF}
\liminf_{R\rightarrow+\infty} \lambda_R \int_{\{\eta R<|t|\le 
R\}}\beta(t)\,dt=0.\end{equation}
Now, we use \eqref{EE2}, \eqref{FF} and~\eqref{EE1}
to conclude that
\begin{eqnarray*}
&& \liminf_{R\rightarrow+\infty}\lambda_R \int_{-R}^R \beta(t)
\alpha(t,R)\,dt
\\ &&\qquad\le
\liminf_{R\rightarrow+\infty} \left[\sup_{|\tau|\le\eta R}\alpha(\tau,R)
\;\lambda_R \int_{-\eta R}^{\eta R} \beta(t)\,dt
+K\lambda_R
\int_{\{\eta R<|t|\le R\}}\beta(t)\,dt\right]
\\ &&\qquad\le
\liminf_{R\rightarrow+\infty} \left[
K \sup_{|\tau|\le\eta R}\alpha(\tau,R)
+K \lambda_R \int_{\{\eta R<|t|\le R\}}\beta(t)\,dt
\right]\\ &&\qquad=
\lim_{R\rightarrow+\infty} K \sup_{|\tau|\le\eta R}\alpha(\tau,R)
+\liminf_{R\rightarrow+\infty}
K\lambda_R \int_{\{\eta R<|t|\le R\}}\beta(t)\,dt
\\ &&\qquad=0,
\end{eqnarray*}
which implies the desired result.\qed
\end{proof}

\vspace{1mm}

\begin{proof}[Proof of Theorem~\ref{theorem_0956}]
First, we recall that, by construction, the function $u^\ast$ defined in~\eqref{defust} 
coincides with the $1$-D minimizer $u^{(0)}$
 along the $n$-th coordinate $x_n$. Then, Theorem~\ref{theorem_unod} yields
\begin{equation}\label{eq_gp1}
\partial_{x_n} u^{\ast}(x) \, =\,  (u^{(0)})'(x_n) \, > \, 0 \ \ \ \forall x \in \R^n
\end{equation}
and 
\begin{equation}\label{eq_gp2}
\dys \lim_{x_n\to \pm\infty} u^{\ast}(x', x_n) =\lim_{x_n\to\pm \infty} u^{(0)}(x_n) \, = \, \pm 1 \ \ \ \forall x'\in \R^{n-1}.
\end{equation}
In view of~\eqref{eq_gp1} and~\eqref{eq_gp2}, it remains to show that $u^\ast$ satisfies $-(-\Delta)^s u^\ast(x) = W'(u^\ast(x))$, for any $x\in \R^n$, and \eqref{8swqkfhfee} will follow by Theorem~\ref{pro_min}. This is straightforward, since, by setting
\begin{equation}\label{def_newvar}
z':=(y'-x')/|y_n-x_n|  \ \ \ \text{and} \ \ \  z_n:=\varpi y_n
\end{equation}
the change of variable formula yields
\begin{eqnarray*}
-(-\Delta)^s u^*(x)
&=& \int_\R \left[ \int_{\R^{n-1}} \frac{
u^{(0)}(\varpi y_n)-u^{(0)}(\varpi x_n)
}{|x_n-y_n|^{n+2s}\,\left( 
1+\displaystyle\frac{|x'-y'|^2}{|x_n-y_n|^2}\right)^{(n+2s)/2} }
\,dy'
\right]dy_n
\\ \\ &=&
\varpi^{2s}
\int_\R\left[
\int_{\R^{n-1}} \frac{u^{(0)}(z_n)-u^{(0)}(\varpi x_n)}{|\varpi 
x_n-z_n|^{1+2s}\,
(1+|z'|^2)^{(n+2s)/2}}\, dz'
\right]dz_n
\\ \\ &=& \int_{\R} \frac{u^{(0)}(z_n)-u^{(0)}(\varpi x_n)}{|\varpi
x_n-z_n|^{1+2s}}\,dz_n \ = \ W'( u^{(0)}(\varpi x_n))
\\ \\ &=& W'(u^*(x)).
\end{eqnarray*}

\vspace{2mm}

Now, we will prove the claims in~(i), (ii) and~(iii). 

We need to carefully estimate the contribution on $B_R$ and on $\CC B_R$ of the $H^s_0$ norm of the function $u^\ast$.

Let $s\in (0,1)$, we observe that by the estimate in~\eqref{sss} it follows that there exists a constant $C_1>0$ such that
$$
\|(u^{(0)})'(x_n)\|_{L^\infty 
\big([x_n-(|x_n|/2),x_n+|x_n|/2]\big)}\le
C_1 |x_n|^{-(1+2s)} 
$$
for any $x_n$ large enough.

Accordingly, Lemma~\ref{lip_lem-0} (used here with~$\rho:=|x_n|/2$)  gives
\begin{equation}\label{09u1} \int_\R \frac{
|u^{(0)}(x_n)-u^{(0)}(y_n)|^2
}{|x_n-y_n|^{1+2s} }\,dy_n \le C_2 |x_n|^{-2s},
\end{equation}
for any~$x_n\in\R^n$ with~$|x_n|$ large enough, for a suitable constant~$C_2>0$.
\vspace{1mm}

{F}rom~\eqref{09u1},
we obtain that, for any~$x\in\R^n$ with~$|x_n|$ large enough,
\begin{eqnarray}\label{sd88dfsoo3ii31}
\int_{\R^n } \frac{|u^*(x)-u^*(y)|^2}{|x-y|^{n+2s} }\,dy  & \le & C_3\int_\R \frac{|u^{(0)}(\varpi x_n)-u^{(0)}(\varpi y_n)|^2
}{|x_n-y_n|^{1+2s} }\,dy_n \nonumber
\\ \nonumber 
\\ & \le &  C_4 |x_n|^{-2s},\end{eqnarray}
for suitable~$C_3$, $C_4>0$.
\vspace{1mm}

Also, if~$x\in\R^n$ with~$|x_n|\le R/2$,
we have that
\begin{equation}\label{sd88dfsoo3ii32}
\int_{\CC B_R}
\frac{
|u^*(x)-u^*(y)|^2}{|x-y|^{n+2s} }\,dy \le
\int_{\CC B_R}
\frac{
4
}{(|y|/2)^{n+2s} }\,dy\le C_5 R^{-2s}
\end{equation}
for a suitable~$C_5>0$.
\vspace{1mm}

Hence, for any~$R\ge4$, by~\eqref{sd88dfsoo3ii31}
and~\eqref{sd88dfsoo3ii32}, we get
\begin{eqnarray}\label{eq_gp3}
&& \int_{B_R}\int_{\CC B_R}
\frac{|u^*(x)-u^*(y)|^2}{|x-y|^{n+2s}}\,dx\,dy
\nonumber\\
\nonumber\\
&& \qquad\qquad \le
\int_{B_R\cap \{|x_n|
\le R/2\}
}\int_{\R^n}
\frac{|u^*(x)-u^*(y)|^2}{|x-y|^{n+2s}}\,dx\,dy
\nonumber\\
&&\quad \qquad \qquad+
\int_{B_R\cap \{|x_n|>R/2\} }\int_{\CC B_R}
\frac{|u^*(x)-u^*(y)|^2}{|x-y|^{n+2s}}\,dx\,dy
\nonumber\\
\nonumber\\
&& \qquad \qquad \le C_5\int_{B_R\cap \{|x_n|
\le R/2\}
} R^{-2s}\,dx
+C_4
\int_{B_R\cap \{|x_n|>R/2\} }
|x_n|^{-2s}\,dx
\nonumber\\
\nonumber\\
&&  \qquad \qquad \le
C_6\,
R^{n-2s}
,\end{eqnarray}
for a suitable~$C_6>0$.
\vspace{1mm}

Note that by~\eqref{eq_gp3} it follows
\begin{eqnarray*}
&&\dys \text{if} \ s =1/2, \\
&&\qquad \qquad \frac{1}{R^{n-1}\log R}\int_{B_R}\int_{\CC B_R}\!\!\frac{|u^*(x)-u^*(y)|^2}{|x-y|^{n+2s}}dx\, dy \, \leq \, C_6\frac{1}{\log{R}} 
\, \overset{R\to +\infty} \longrightarrow \,  0, \\
\\
&&\dys  \forall s\in(1/2,\, 1),\\
&&\qquad \qquad \frac{1}{R^{n-1}}\int_{B_R}\int_{\CC B_R}\!\!\frac{|u^*(x)-u^*(y)|^2}{|x-y|^{n+2s}}dx\, dy \, \leq \, C_6\frac{1}{R^{2s-1}} 
\, \overset{R\to +\infty} \longrightarrow \,  0,
\end{eqnarray*}
which shows the asymptotic behavior as $R$ goes to infinity of the contribution in the $H^s_0$ norm of $u^\ast$ on  $\CC B_R$, as stated in claim~(ii) and~(iii).
\vspace{1mm}

For the case $s\in (0,\, 1/2)$, the estimate in~\eqref{eq_gp3} yields
\begin{equation}\label{eq_ub}
\frac{1}{R^{n-2s}}\int_{B_R}\int_{\CC B_R}\!\!\frac{|u^*(x)-u^*(y)|^2}{|x-y|^{n+2s}}dx\, dy \, \leq \, C_6,
\end{equation}
which provides an upper bound for any $R$ large enough. Moreover, by construction of $u^\ast$, we can obtain a lower bound as follows.
\begin{eqnarray}\label{eq_lb}
\int_{B_R}\int_{\CC B_R}\!\!\frac{|u^*(x)-u^*(y)|^2}{|x-y|^{n+2s}}dx\, dy &\geq &  C_7\int_{B_{R/2}}\int_{\CC B_{2R}}\frac{dx\, dy}{|x-y|^{n+2s}}
\nonumber \\
& \geq &  C_7\int_{B_{R/2}}\!\!\!\!dx\int_{\CC B_{2R}}\frac{dy}{|y|^{n+2s}} \nonumber \\
& = &  C_8R^{n-2s},
\end{eqnarray}
for suitable positive constants $C_7$ and $C_8$, provided that $R$ is large enough. Hence, \eqref{eq_ub} together with \eqref{eq_lb} gives the estimates of the contribution in the $H^s_0$ norm of $u^\ast$ on $\CC B_R$ for the case $s\in (0,\, 1/2)$ as in claim~(i).

\vspace{2mm}
Now, notice that for any $s\in (0,1)$ using the change of variable in~\eqref{def_newvar}, $t:=\varpi x_n$, $\rho=x'/R$, we have
\begin{eqnarray}\label{1287.1}
&& \frac{1}{R^{n-1}}\!\int_{B_R}\int_{\R^n}\!\!\frac{|u^*(x)-u^*(y)|^2}{|x-y|^{n+2s}}\,dx\,dy
\nonumber \\
\nonumber \\
&& \qquad  = \frac{\varpi^{2s}}{R^{n-1}}\int_{B_R}\left[\int_{\R^{n-1}}\left(
\int_\R\frac{|u^{(0)}(\varpi x_n) -u^{(0)}(z_n)|^2}{|\varpi x_n-z_n|^{1+2s}(1+|z'|^2)^{(n+2s)/2}}\, dz_n\right)
dz'\right]dx \nonumber\\
\nonumber\\
&& \qquad = \frac{1}{\varpi}\int_{-\varpi R}^{\varpi R}
\left[ \int_{B_{\sqrt{1-|t|^2/(\varpi^2 R^2)}}}
\left(\int_\R \frac{|u^{(0)}(t)-u^{(0)}(z_n)|^2}{|t-z_n|^{1+2s}}\, dz_n\right)
d\rho \right]dt \nonumber \\
\nonumber \\
&& \qquad =\frac{\omega_{n-1}}{\varpi}\int_{-\varpi R}^{\varpi R}
\left[\int_\R \frac{|u^{(0)}(t)-u^{(0)}(z_n)|^2}{|t-z_n|^{1+2s}}\, dz_n
\right]dt -2\theta_1(R),
\end{eqnarray}
where
\begin{eqnarray}\label{def_theta1}
\theta_1(R) & := & \frac{1}{2}\frac{\omega_{n-1}}{\varpi}\int_{-\varpi R}^{\varpi R}
\Bigg[\left(1-\left(1-\frac{t^2}{\varpi^2 R^2}\right)^{n-1}\right) \nonumber \\
&& \qquad \qquad \qquad \ \int_\R \frac{|u^{(0)}(t)-u^{(0)}(z_n)|^2}{|t-z_n|^{1+2s}}\, dz_n\Bigg]dt.
\end{eqnarray}
Hence, it follows
\begin{eqnarray}\label{eq_interna}
\dys
&& \frac{1}{2}R^{1-n}\int_{B_R}\int_{B_R}\frac{|u^*(x)-u^*(y)|^2}{|x-y|^{n+2s}}\,dx\,dy \nonumber \\
\nonumber \\
&&\qquad  \quad = \frac{1}{2}R^{1-n}\left(
\int_{B_R}\int_{\R^{n}}\frac{|u^*(x)-u^*(y)|^2}{|x-y|^{n+2s}}\,dx\,dy \nonumber \right. \\
&& \left.  \qquad \qquad \qquad \qquad-\int_{B_R}\int_{\CC B_R}\frac{|u^*(x)-u^*(y)|^2}{|x-y|^{n+2s}}\,dx\,dy
\right) \nonumber \\
\nonumber \\
&&\qquad  \quad = \frac{1}{2}\frac{\omega_{n-1}}{\varpi}\int_{-\varpi R}^{\varpi R}
\left[\int_\R \frac{|u^{(0)}(t)-u^{(0)}(z_n)|^2}{|t-z_n|^{1+2s}}\, dz_n
\right]dt -\theta_2(R),
\end{eqnarray}
where
\begin{equation*}
\theta_2(R):= \frac{1}{2}R^{1-n}\int_{B_R}\int_{\CC B_R}\frac{|u^*(x)-u^*(y)|^2}{|x-y|^{n+2s}}\,dx\,dy +\theta_1(R). \nonumber
\end{equation*}
Using again the change of variable in~\eqref{def_newvar}, we have
\begin{eqnarray}\label{1287.2}
\frac{1}{R^{n-1}}\!\int_{B_R}\!W(u^*(x))\,dx & = &
\frac{\omega_{n-1}}{\varpi}\!\!
\int_{-\varpi R}^{\varpi R} W'(u^{(0)}(t))
\left(
1-\frac{t^2}{\varpi^2 R^2}
\right)^{\!\!n-1}\!\!dt \nonumber \\
\nonumber \\
&= & \frac{\omega_{n-1}}{\varpi}\!\int_{-\varpi R}^{\varpi R}\!W'(u^{(0)})(t) - \theta_3(R),
\end{eqnarray}
where
\begin{equation*}
\dys
\theta_3(R):=\frac{\omega_{n-1}}{\varpi}\!\int_{-\varpi R}^{\varpi R} W'(u^{(0)})(t)
\left(1-
\left(1-\frac{t^2}{\varpi^2 R^2}\right)^{n-1}
\right) dt.
\end{equation*}
Now we define the scaling constant $\lambda_R$ depending of $s$ as follows
\begin{equation*}
\dys
\lambda_R=
\begin{cases}
\dys \frac{1}{\log R} & \text{if} \ s = 1/2, \\
\\
1 & \text{if} \ s\in (1/2,\, 1)
\end{cases}
\end{equation*}
and we combine~\eqref{eq_interna} with~\eqref{1287.2}; we have
\begin{eqnarray}\label{eq_tutto}
&& \dys
\lambda_R R^{1-n} \FF(u^*; B_R) \nonumber \\
&& \qquad = \lambda_R R^{1-n}\cdot\Bigg\{
\frac{1}{2}\int_{B_R}\int_{B_R}\frac{|u^*(x)-u^*(y)|^2}{|x-y|^{n+2s}}\,dx\,dy \nonumber \\
&& \qquad \quad+ \int_{B_R}\int_{\CC B_R}\frac{|u^*(x)-u^*(y)|^2}{|x-y|^{n+2s}}\,dx\,dy 
+\int_{B_R} W'(u^*(x))\, dx\Bigg\} \nonumber \\
\nonumber \\
&& \qquad = \lambda_R\cdot\Bigg\{\,
\frac{1}{2}\frac{\omega_{n-1}}{\varpi}\int_{-\varpi R}^{\varpi R}
\left[\int_{\R}\frac{|u^{(0)}(t)-u^{(0)}(z_n)|^2}{|t-z_n|^{1+2s}}\,dz_n\right]dt  \\
&& \qquad \qquad \qquad  +\frac{1}{2}R^{1-n}\int_{B_R}\int_{\CC B_R} \frac{|u^*(x)-u^*(y)|^2}{|x-y|^{n+2s}}\,dx\, dy \nonumber \\
&& \qquad \qquad  \qquad + \frac{\omega_{n-1}}{\varpi}\int_{-\varpi R}^{\varpi R}W'(u^{(0)}(t))\, dt  -\theta_4(R) \,\Bigg\}, \nonumber
\end{eqnarray}
where
$$
\dys \theta_4(R) = \big( \theta_2(R)+\theta_3(R) \big).
$$
\vspace{1mm}

We observe that
\begin{equation}\label{eq_resto}
\dys \liminf_{R\rightarrow+\infty} \lambda_R \,\theta_4(R) = 0.
\end{equation}
Indeed, recalling that~${\mathcal{G}}(u^{(0)})$ is finite, due
to Theorem~\ref{theorem_unod},
it suffices to recall~\eqref{eq_gp3}
and apply~Lemma~\ref{lem_aux} with
$$
\alpha(t,R)=\frac{\omega_{n-1}}{\varpi}\left(1-\left(1-\frac{|t|^2}{R^2}\right)^{n-1}\right)
$$
and
$$
\dys \beta(t)=\frac{1}{2}\int_{\R}\frac{|u^{(0)}(t)-u^{(0)}(z_n)|^2}{|t-z_n|^{1+2s}}\, dz_n
+ W'(u^{(0)})(t).
$$
\vspace{1mm}

Thus, we make use of~\eqref{eq_gp3} and~\eqref{eq_resto}, so that by taking the limit as $R\to+\infty$ in~\eqref{eq_tutto} we obtain
\begin{eqnarray*}
 \liminf_{R\rightarrow+\infty}
\lambda_R R^{1-n} \FF(u^*; B_R) \, =\, \frac{\omega_{n-1}}{\varpi}
{\mathcal{G}} (u^{(0)}).
\end{eqnarray*}
This completes the proof of claim~(ii) and (iii).
\vspace{2mm}

Finally, using Lemma~\ref{lip_lem-0} with~$\rho:=1$, we obtain
$$ \int_{\R^n} \frac{|u^*(x)-u^*(y)|^2}{|x-y|^{n+2s}}\,dy\le C_9,$$
for any~$x\in\R^n$, for a suitable~$C_9>0$, and so
$$ \FF(u^*; B_R\setminus B_{(1-\delta)R}) \le
\Big( C_9 +\sup_{r\in[-1,1]} W(r)\Big) \,\big|B_R\setminus 
B_{(1-\delta)R}\big|,$$
that is~\eqref{1-del}.
The proof of the theorem is complete.\qed
\end{proof}

\vspace{2mm}

\section{Appendix}
In this Appendix we state and prove some general results involving the 
Ga\-gliar\-do norm $\|\cdot\|_{H^s}$ and various results, that are 
necessary for the proofs of the main results of this papers.
\vspace{1mm}

As usual in this paper, throughout this section we will assume that the fractional exponent $s$ is a real number belonging to $(0,1)$.

\subsection{Regularity properties of the fractional Allen-Cahn equation}
The following propositions recall how the fractional Laplacian operators interact with the $C^{\alpha}$-norms. Their proofs can be found in~\cite[Chapter 2]{silvestre}, which presents some general properties of the $(-\Delta)^s$ operators and provides characterization of its supersolutions (see also \cite{silvestre07} and \cite{CS10}).
\vspace{1mm}

\begin{proposition}{\rm (}\cite[Proposition 2.1.10]{silvestre}{\rm)}\label{pro_sil}
Let $n\ge 1$. Let $w\in C^{0,\alpha}(\R^n)$, for $\alpha\in (0,1]$.
Let $u \in L^{\infty}(\R^n)$ be such that
\begin{equation}\label{eq_silv}
-(-\Delta)^s u(x) = w(x) \ \ \text{for any} \ x \in \R^n.
\end{equation}
Then,
\begin{itemize}
\item[(i)]{
If $\alpha+2s\leq 1$, then $u \in C^{0,\alpha+2s}(\R^n)$. Moreover
$$
\|u\|_{C^{0,\alpha+2s}(\R^n)} \le C\big(\|u\|_{L^{\infty}(\R^N)}+\|w\|_{C^{0,\alpha}(\R^n)}\big)
$$
for a constant $C$ depending only on $n, \alpha$ and $s$.
}\vspace{1mm}
\item[(ii)]{
If $\alpha+2s>1$, then $u \in C^{1,\alpha+2s-1}(\R^n)$. Moreover
$$
\|u\|_{C^{1,\alpha+2s-1}(\R^n)} \le C\big(\|u\|_{L^{\infty}(\R^n)}+\|w\|_{C^{0,\alpha}(\R^n)}\big)
$$
for a constant $C$ depending only on $n, \alpha$ and $s$.
}
\end{itemize}\vspace{1mm}
\end{proposition}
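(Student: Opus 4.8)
\medskip\noindent\emph{Proof idea.} The statement is an interior Schauder estimate for the fractional Laplacian; although it is phrased globally on $\R^n$, it is local in nature and the only genuinely new feature with respect to the classical case is the nonlocality of $(-\Delta)^s$. The plan is to reduce to a normalized local problem and then run a Campanato-type iteration in which $u$ is compared, at each dyadic scale, with an $s$-harmonic function. First I would normalize: dividing $u$ by $\|u\|_{L^\infty(\R^n)}+\|w\|_{C^{0,\alpha}(\R^n)}$, one may assume this quantity is $\le 1$. Since the rescaling $u\mapsto u(\lambda\,\cdot)$, $w\mapsto\lambda^{2s}\,w(\lambda\,\cdot)$ preserves \eqref{eq_silv}, and by translation invariance, it then suffices to control, uniformly over the choice of an arbitrary center, the deviation of $u$ from its Taylor polynomial on a fixed small ball about that center; by the standard Campanato characterization this produces the $C^{0,\alpha+2s}$ bound of case~(i) and the $C^{1,\alpha+2s-1}$ bound of case~(ii) (with the integer thresholds of $\alpha+2s$ playing, as usual, the role of dividing cases).

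The core ingredient is the quantitative interior smoothness of $s$-harmonic functions: if $h\in L^\infty(\R^n)$ satisfies $(-\Delta)^sh=0$ in $B_1$, then $h\in C^\infty(B_{1/2})$ and $\|h\|_{C^k(B_{1/2})}\le C_k\|h\|_{L^\infty(\R^n)}$ for every $k$, which follows from the explicit Poisson-kernel representation of $h$ in $B_1$ in terms of its exterior values, by differentiating under the integral sign. Hence, for each $\rho\in(0,1/2)$, the Taylor polynomial $P$ of $h$ at the center (of degree $\lfloor\alpha+2s\rfloor$) satisfies $\|h-P\|_{L^\infty(B_\rho)}\le C\,\rho^{\lfloor\alpha+2s\rfloor+1}\|h\|_{L^\infty(\R^n)}$, which decays strictly faster than $\rho^{\alpha+2s}$. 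In the iteration, at scale $r=\rho^k$ one writes $u=h_k+e_k$ on $B_r$, where $h_k$ is $s$-harmonic in $B_r$ with the same values as $u$ outside $B_r$ and $e_k$ vanishes outside $B_r$; a comparison (barrier) argument together with $w\in C^{0,\alpha}$ gives $\|e_k\|_{L^\infty(B_r)}\le C\,r^{2s}\big(\|w-w(\text{center})\|_{L^\infty(B_r)}+|w(\text{center})|\big)$, and after also subtracting the explicit solution in $B_r$ of $-(-\Delta)^s\phi=w(\text{center})$ the genuinely inhomogeneous error is $O(r^{\alpha+2s})$. Combining the two estimates and carrying along the approximating polynomials from the previous steps yields the decay
$$\inf_{\deg P<\alpha+2s}\ \|u-P\|_{L^\infty(B_r)}\ \le\ C\,r^{\alpha+2s}\qquad(r\to0^+),$$
which is exactly what is needed.

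The main obstacle is the nonlocal tail term: when one localizes $(-\Delta)^su$ to $B_r$, the contribution of $u$ from $\R^n\setminus B_r$ enters as an extra datum, and the iteration only closes if this contribution is $O(r^{\alpha+2s})$ at every step. One is thus forced to interpret $(-\Delta)^s$ in the weighted $L^1$ sense (integrability against $(1+|y|)^{-n-2s}$), to keep track of the whole sequence of polynomials $P_k$, and to bound $\int_{\R^n\setminus B_r}|u(y)-P_k(y)|\,|y|^{-n-2s}\,dy$, where the slow decay of $P_k$ against the kernel $|y|^{-n-2s}$ is compensated precisely by the constraint $2s<1$ in case~(i) and, in case~(ii), after first subtracting the affine part of $u$. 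An arguably more economical alternative (the route underlying \cite[Chapter~2]{silvestre}) is to use the Caffarelli--Silvestre extension \cite{caffarelli}: $u$ is the trace on $\{y=0\}$ of the solution $U$ of $\mathrm{div}(y^{1-2s}\nabla U)=0$ in $\R^{n+1}_+$ with weighted Neumann datum $-\lim_{y\to0^+}y^{1-2s}\partial_yU=c_{n,s}\,w\in C^{0,\alpha}$, so the statement becomes a boundary Schauder estimate for a degenerate ($A_2$-weighted) elliptic equation; there the gain of exactly $2s$ derivatives is read off from the $y^{2s}$ behaviour of the weighted Poisson kernel, and $\alpha+2s=1$ appears naturally as the transition between boundary $C^{0,\cdot}$ and $C^{1,\cdot}$ estimates.
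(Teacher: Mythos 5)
This proposition is imported from \cite[Proposition 2.1.10]{silvestre}; the paper states it with a citation and gives no proof, so there is no in-text argument to compare against. What you propose is also not the route Silvestre actually takes. His argument in Chapter~2 of the thesis is a direct potential estimate: one considers the Riesz potential $I_{2s}w(x)=c_{n,s}\int_{\R^n}|x-y|^{2s-n}\,w(y)\,dy$ (suitably renormalized when $2s\ge n$ or when the integral needs subtraction of a constant), observes that it is a particular solution of $(-\Delta)^s v=w$, and bounds the H\"older seminorm of $I_{2s}w$ by splitting the difference $I_{2s}w(x)-I_{2s}w(x')$ into a near-field part (controlled by $w\in C^{0,\alpha}$ and the kernel singularity) and a far-field part (controlled by the kernel decay). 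The threshold $\alpha+2s=1$ appears because when $\alpha+2s>1$ one first differentiates under the integral and then repeats the same splitting for $\nabla I_{2s}w$. Passing from $I_{2s}w$ to an arbitrary bounded solution $u$ uses a Liouville-type rigidity result for globally bounded $s$-harmonic functions. There is no Campanato iteration, no $s$-harmonic replacement at dyadic scales, and no use of the half-space extension in that argument; the extension appears elsewhere in the thesis, and your parenthetical claim that it is ``the route underlying \cite[Chapter~2]{silvestre}'' should be dropped.

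Your Campanato/blow-up sketch is nevertheless a legitimate and by now standard alternative --- it is essentially the mechanism behind the approximation scheme of \cite{CS10}, which this paper also invokes --- and your diagnosis of the nonlocal tail as the key difficulty is correct. But as written the sketch stops exactly where that difficulty must be resolved: the decomposition $u=h_k+e_k$, the subtraction of a solution with constant right-hand side, and the required estimate $\int_{\R^n\setminus B_r}\big|u(y)-P_k(y)\big|\,|y|^{-n-2s}\,dy=O(r^{\alpha+2s})$ are asserted rather than verified, none of them is routine, and the condition you invoke in case~(i) should be $\alpha+2s\le 1$, not merely $2s<1$. Relative to Silvestre's proof, your route would buy generality (it adapts to variable kernels and to equations in bounded domains with rough exterior data) at the cost of a considerably longer argument; Silvestre's potential-theoretic computation is shorter and more explicit, but is tied to the translation-invariant, whole-space setting, which is exactly the setting of the proposition as stated.
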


\begin{proposition}{\rm (}\cite[Proposition 2.1.11]{silvestre}{\rm)}\label{pro_sil2}
Let $n\ge 1$. Let $u$ and $w \in L^{\infty}(\R^n)$ be such that
\begin{equation*}
-(-\Delta)^s u(x) = w(x) \ \ \text{for any} \ x \in \R^n.
\end{equation*}
Then,
\begin{itemize}
\item[(i)]{
If $2s\leq 1$, then $u \in C^{0,\alpha}(\R^n)$ for any $\alpha<2s$. Moreover
$$
\|u\|_{C^{0,\alpha}(\R^n)} \le C\big(\|u\|_{L^{\infty}(\R^n)}+\|w\|_{L^{\infty}(\R^n)}\big)
$$
for a constant $C$ depending only on $n, \alpha$ and $s$.
}\vspace{1mm}
\item[(ii)]{
If $2s>1$, then $u \in C^{1,\alpha}(\R^n)$ for any $\alpha<2s-1$. Moreover
$$
\|u\|_{C^{1,\alpha}(\R^n)} \le C\big(\|u\|_{L^{\infty}(\R^n)}+\|w\|_{L^{\infty}(\R^n)}\big)
$$
for a constant $C$ depending only on $n, \alpha$ and $s$.
}
\end{itemize}\end{proposition}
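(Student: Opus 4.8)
The obstruction to quoting Proposition~\ref{pro_sil} directly is that the datum $w$ is here only bounded, not H\"older continuous, so the plan is to first gain an arbitrarily small amount of H\"older regularity and then bootstrap. By the translation invariance of $(-\Delta)^s$ it is enough to prove, with a constant $C$ independent of the centre of the ball, an interior estimate of the form $[u]_{C^{0,\gamma}(B_1)}\le C(\|u\|_{L^\infty(\R^n)}+\|w\|_{L^\infty(\R^n)})$ for some fixed $\gamma=\gamma(n,s)\in(0,2s)$; combined with the iteration below and translation, this yields the global statements in (i) and (ii).

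For this initial step I would invoke (or reprove) the nonlocal analogue of the De~Giorgi--Nash--Moser / Krylov--Safonov H\"older estimate (see \cite{silvestre07,CS10}): working at dyadic scales $r=2^{-k}$ one first renormalizes the contribution of $u$ coming from far away --- which, because $u\in L^\infty(\R^n)$ and $|y|^{-n-2s}$ is integrable at infinity, enters only as a controlled perturbation --- and then, by comparison with suitable nonlocal barriers, one obtains an oscillation decay $\mathrm{osc}_{B_{r/2}}\,u\le(1-\theta)\,\mathrm{osc}_{B_{r}}\,u+Cr^{2s}\|w\|_{L^\infty}$ for some $\theta\in(0,1)$; iterating in $k$ produces $u\in C^{0,\gamma}$ with the quantitative bound.

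Once $u\in C^{0,\gamma}(\R^n)$ with $\gamma<2s$, I would bootstrap by mollification. Fix $\eps>0$, put $u_\eps:=u*\eta_\eps$ and $w_\eps:=w*\eta_\eps$ with $\eta_\eps$ a standard mollifier; then $u_\eps$ is smooth, $-(-\Delta)^s u_\eps=w_\eps$, $\|u_\eps\|_{L^\infty}\le\|u\|_{L^\infty}$, $\|w_\eps\|_{L^\infty}\le\|w\|_{L^\infty}$, $[w_\eps]_{C^{0,\beta}}\le C\eps^{-\beta}\|w\|_{L^\infty}$ for every $\beta\in(0,1]$, and $\|u-u_\eps\|_{L^\infty}\le C\eps^{\gamma}[u]_{C^{0,\gamma}}$. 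Choosing $\beta$ small and applying Proposition~\ref{pro_sil} to $u_\eps$ (part~(i) if $\beta+2s\le1$, part~(ii) otherwise) gives a H\"older bound for $u_\eps$ of exponent close to $\beta+2s$, with norm $\le C(\|u\|_{L^\infty}+\eps^{-\beta}\|w\|_{L^\infty})$. Comparing $u$ with $u_\eps$ through $|u(x)-u(y)|\le 2\|u-u_\eps\|_{L^\infty}+[u_\eps]_{C^{0,\beta+2s}}\,|x-y|^{\beta+2s}$ and optimizing the choice of $\eps$ in terms of $|x-y|$ (roughly $\eps\sim|x-y|^{(\beta+2s)/(\gamma+\beta)}$) improves the H\"older exponent of $u$ from $\gamma$ to a strictly larger value which is still $<2s$, since $\gamma<2s$. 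Iterating, the exponent increases and tends to $\min(2s,1)$; when $2s>1$ one runs the same argument once more with part~(ii) of Proposition~\ref{pro_sil} to reach $C^{1,\alpha}$ for every $\alpha<2s-1$. At each step the constant keeps the form $C(\|u\|_{L^\infty}+\|w\|_{L^\infty})$, which is the asserted estimate (the loss of the endpoint exponent is genuine, and is why one claims $\alpha<2s$, resp. $\alpha<2s-1$, rather than equality).

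The hard part is the initial H\"older estimate; everything after it is essentially bookkeeping. An alternative that trades the oscillation-decay argument for potential theory is to split $u=h+g$ in $B_1$, where $h=u$ on $\CC B_1$ and $(-\Delta)^s h=0$ in $B_1$ --- so $h\in C^\infty(B_{1/2})$ with all derivatives bounded by $C\|u\|_{L^\infty(\R^n)}$, via the explicit Poisson kernel of the ball together with the tail decay $|y|^{-n-2s}$ --- and $g(x)=\int_{B_1}G_{B_1}(x,y)\,w(y)\,dy$ is the Green potential of $w$; the regularity of $g$ then follows from the sharp bounds $0\le G_{B_1}(x,y)\le c|x-y|^{2s-n}$ and $|\nabla_x G_{B_1}(x,y)|\le c|x-y|^{2s-n-1}$, i.e. from the fractional version of the classical fact that the Riesz potential of an $L^\infty$ density gains $2s$ derivatives up to an arbitrarily small loss. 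Either way, the delicate point is the careful handling of the nonlocal tails, exactly as emphasized in the introduction.
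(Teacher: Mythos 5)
The paper does not prove Proposition~\ref{pro_sil2}: it is quoted from \cite[Proposition 2.1.11]{silvestre}, and the reader is referred to Chapter~2 of that thesis for the argument, so there is no in-paper proof to compare against. Your proposal reconstructs the statement by a route that, while logically coherent, is not the one in \cite{silvestre}. Silvestre's argument is essentially the ``alternative'' you sketch at the end: a potential-theoretic decomposition, with the $s$-harmonic part controlled by the explicit Poisson kernel of the ball and the source part by the Green potential, whose kernel behaves like $|z|^{2s-n}$; one gains the full $2s$ derivatives in a single pass, the strict inequalities $\alpha<2s$ and $\alpha<2s-1$ reflecting the borderline integrability of that kernel. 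This is self-contained and shorter than a Krylov--Safonov plus bootstrap route.

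Your primary route (nonlocal Krylov--Safonov to get some H\"older exponent, then mollify $u$ and $w$ and optimize $\eps$ against $|x-y|$ via Proposition~\ref{pro_sil}) is sound in outline, and the exponent iteration $\gamma\mapsto\gamma(\beta+2s)/(\gamma+\beta)$ does increase monotonically to $2s$ as claimed. Two points need care. First, the oscillation decay as you display it, $\mathrm{osc}_{B_{r/2}}u\le(1-\theta)\,\mathrm{osc}_{B_r}u+Cr^{2s}\|w\|_{L^\infty}$, is not the correct statement for a nonlocal operator: the value of $u$ inside $B_{r/2}$ depends on $u$ on all of $\R^n$, so the right-hand side must carry a tail term (here controllable because $u\in L^\infty(\R^n)$). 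You gesture at ``renormalizing the far contribution,'' but a reader following the displayed inequality literally would get stuck; the tail must appear explicitly. Second, the entry step is the nonlocal De Giorgi--Nash--Moser / Krylov--Safonov estimate, a substantial theorem that you cite rather than prove, so your route has considerably heavier external dependencies than the potential-theoretic proof the paper actually points to; that is a legitimate choice, but it should be flagged as the load-bearing citation rather than tucked into a parenthetical.
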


We remark that the above results (and, consequently, the claims in the forthcoming Lemma~\ref{lem_co}) are valid also for solutions of~\eqref{eq_silv} in bounded domains, leading to a local regularity theory.

\vspace{2mm}

Since we deal with the case of $w$ in~\eqref{eq_silv} being the derivative of a double-well potential $W$,  
we have to extrapolate the regularity informations for the solutions of equation (\ref{equazione}); this can be obtained by iterating the results in Proposition \ref{pro_sil} and Proposition~\ref{pro_sil2}.
In the following two lemmas we arrange some regularity results in the form to be applied in this paper (as well as in~\cite{sv0} and~\cite{sv1}).

\begin{lemma}\label{lem_co}
Let $n\ge 1$. Let $u\in L^{\infty}(\R^n)$ be such that
\begin{equation}\label{eq_giusta}
-(-\Delta)^s u(x) = W'(u(x)) \ \ \text{for any} \ x \in \R^n,
\end{equation}
with $W\in C^1(\R)$.
Then,
\begin{itemize}
\item[(i)]{
If $s \in (0,\, 1/2]$, then $u \in C^{0,\alpha}(\R^n)$ for any $\alpha<2s$. Moreover, 
$$
\|u\|_{C^{0,\alpha}(\R^n)} \le C\big(\|u\|_{L^{\infty}(\R^n)}+\|W'(u)\|_{L^{\infty}(\R^n)}\big).
$$
}
\item[(ii)]{
If $s\in (1/2,\, 1)$, then $u \in C^{1,\alpha}(\R^n)$  for any $\alpha<2s-1$. Moreover, 
$$
\|u\|_{C^{1,\alpha}(\R^n)} \le C\big(\|u\|_{L^{\infty}(\R^n)}+\|W'(u)\|_{L^{\infty}(\R^n)}\big),
$$
}
\end{itemize}
for a constant $C$ depending only on $n, \alpha$ and $s$.
\end{lemma}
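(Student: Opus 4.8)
The plan is to bootstrap regularity by iterating Proposition~\ref{pro_sil} and Proposition~\ref{pro_sil2}, using at each step the fact that composing a $C^1$ function $W'$ with a Hölder (or $C^1$) function again produces a function of comparable regularity. First I would observe that, since $u\in L^\infty(\R^n)$ by hypothesis, the right-hand side $W'(u)$ lies in $L^\infty(\R^n)$ as well, with $\|W'(u)\|_{L^\infty}\le \sup_{|t|\le\|u\|_{L^\infty}}|W'(t)|<\infty$ because $W'$ is continuous. This gives the starting point for the iteration.

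\emph{Case (i): $s\in(0,1/2]$.} Here I would apply Proposition~\ref{pro_sil2}(i) directly: since $2s\le 1$ and $u,W'(u)\in L^\infty(\R^n)$, we immediately get $u\in C^{0,\alpha}(\R^n)$ for every $\alpha<2s$, together with the stated estimate $\|u\|_{C^{0,\alpha}}\le C(\|u\|_{L^\infty}+\|W'(u)\|_{L^\infty})$. In this range there is essentially nothing further to do, since the conclusion already matches the assertion; the only point to mention is that $\|W'(u)\|_{L^\infty}$ is controlled by $\|u\|_{L^\infty}$ through the continuity of $W'$, so the right-hand side may equivalently be written purely in terms of $\|u\|_{L^\infty}$, but keeping $\|W'(u)\|_{L^\infty}$ is the cleaner statement and is exactly what is claimed.

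\emph{Case (ii): $s\in(1/2,1)$.} Now $2s>1$, so Proposition~\ref{pro_sil2}(ii) gives $u\in C^{1,\alpha_0}(\R^n)$ for any $\alpha_0<2s-1$; in particular $u\in C^{0,\beta}(\R^n)$ for some $\beta\in(0,1)$ (say $\beta=2s-1$ if that is $<1$, or any fixed $\beta\in(0,1)$ otherwise). Since $W'\in C^1(\R)$, it is locally Lipschitz, hence $W'(u)\in C^{0,\beta}(\R^n)$ with norm controlled by $\|u\|_{C^{0,\beta}}$ and the Lipschitz constant of $W'$ on $[-\|u\|_{L^\infty},\|u\|_{L^\infty}]$. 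Now I would feed this into Proposition~\ref{pro_sil}: with $w=W'(u)\in C^{0,\beta}$ and $\beta+2s-1>0$, part (ii) of that proposition (the case $\beta+2s>1$, which holds automatically since $2s>1$) yields $u\in C^{1,\beta+2s-1}(\R^n)$ with the corresponding estimate. Iterating this finitely many times — each iteration improves the Hölder exponent of the derivative by an additive amount close to $2s-1>0$ — one reaches $u\in C^{1,\alpha}(\R^n)$ for every $\alpha<2s-1$ (once the exponent would exceed $2s-1$ the gain saturates at the threshold dictated by the smoothness of $W'$, namely $C^1$), with the bound $\|u\|_{C^{1,\alpha}}\le C(\|u\|_{L^\infty}+\|W'(u)\|_{L^\infty})$ obtained by tracking the constants through each application and absorbing the Lipschitz constant of $W'$ into $C$.

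\emph{Main obstacle.} The routine calculations are the bookkeeping of the constants through the iteration and checking that the Hölder exponents add up correctly without exceeding the obstruction imposed by $W'\in C^1$ only; the one genuinely delicate point is the very first line, namely that $u\in L^\infty$ is enough to start (so that $W'(u)$ makes sense and is bounded) and that, throughout the bootstrap, one only ever composes $W'$ with a function that is at least Hölder continuous, so that the chain rule / Hölder-composition step $W'(u)\in C^{0,\gamma}$ whenever $u\in C^{0,\gamma}$ is legitimate. Since $W\in C^1(\R)$ this is exactly where the regularity of the right-hand side is limited, and it is what prevents pushing $u$ beyond $C^{1,\alpha}$ with $\alpha<2s-1$; higher regularity of $W$ (as assumed elsewhere in the paper, e.g. $W\in C^2$) would allow a further step, which is precisely the content of the companion lemma.
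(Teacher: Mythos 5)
Your argument is correct and uses the same key ingredient as the paper, namely Proposition~\ref{pro_sil2} applied with $w:=W'(u)\in L^\infty(\R^n)$. For case~(i) your single application of Proposition~\ref{pro_sil2}(i) is exactly the paper's proof. For case~(ii), however, you have over-engineered it: the paper's proof is again a single application of Proposition~\ref{pro_sil2}(ii), which by itself already yields $u\in C^{1,\alpha}(\R^n)$ for every $\alpha<2s-1$ together with the stated bound in terms of $\|u\|_{L^\infty}+\|W'(u)\|_{L^\infty}$. You correctly state this in your opening line of case~(ii), but then launch a bootstrap with Proposition~\ref{pro_sil} whose stated goal — ``one reaches $u\in C^{1,\alpha}$ for every $\alpha<2s-1$'' — is precisely what you already had; the iteration does not produce any new information within the class of exponents claimed by the lemma. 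The bootstrap idea you sketch (compose $W'\in C^1$ with the Hölder regularity of $u$, feed the resulting $C^{0,\beta}$ right-hand side into Proposition~\ref{pro_sil}, iterate) is the right move, but it belongs to the \emph{next} lemma, Lemma~\ref{lem_dif}, where $W\in C^2$ is assumed and one genuinely needs to push the regularity up to $C^{2,\alpha}$. Here it is superfluous, and keeping it risks the reader thinking the first application of Proposition~\ref{pro_sil2}(ii) did not suffice. In short: case~(i) matches the paper; case~(ii) is correct but should be trimmed to the first application of Proposition~\ref{pro_sil2}(ii), with the bootstrap deferred to Lemma~\ref{lem_dif}.
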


\begin{proof}
The proof is immediate.
Let $u$ in $L^{\infty}(\R^n)$ 
be a solution of equation (\ref{eq_giusta}). Since $W$ belongs to $C^1(\R)$, it suffices to apply Proposition \ref{pro_sil2}(i)-(ii) by chosing $w(x):=W'(u(x))$.\qed
\end{proof}

\vspace{1mm}

\begin{lemma}\label{lem_dif}
Let $n\geq1$ and let $u\in L^{\infty}(\R^n)$ satisfy equation~\eqref{eq_giusta}, with $W\in C^2({\R})$.
Then $u\in C^{2,\alpha}(\R^n)$, with $\alpha$ depending on $s$.
\end{lemma}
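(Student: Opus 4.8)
The plan is to bootstrap the regularity of $u$ starting from Lemma~\ref{lem_co}, using the chain rule and the fact that $W\in C^2$ so that $W'$ is $C^1$ and in particular locally Lipschitz. First I would observe that since $u\in L^\infty(\R^n)$ is a solution of~\eqref{eq_giusta} and $W'$ is continuous, we have $W'(u)\in L^\infty(\R^n)$, so Lemma~\ref{lem_co} applies and gives an initial gain of Hölder regularity: either $u\in C^{0,\alpha}$ for all $\alpha<2s$ (if $s\le 1/2$) or $u\in C^{1,\alpha}$ for all $\alpha<2s-1$ (if $s>1/2$). In either case $u$ is (at least) $C^{0,\beta}$ for some $\beta\in(0,1)$.

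Next I would feed this back into the equation. Since $W\in C^2$, the composition $x\mapsto W'(u(x))$ is $C^{0,\beta}$ whenever $u$ is: for $x,y$ in a fixed bounded set, $|W'(u(x))-W'(u(y))|\le \big(\sup_{|t|\le\|u\|_\infty}|W''(t)|\big)\,|u(x)-u(y)|$, and the $C^{0,\beta}$ seminorm of $u$ controls the right-hand side. So $w:=W'(u)\in C^{0,\beta}_{\mathrm{loc}}(\R^n)$, and now I can invoke Proposition~\ref{pro_sil} (in its local form, as noted in the remark after Proposition~\ref{pro_sil2}) with exponent $\beta$: this improves $u$ to $C^{0,\beta+2s}$ or $C^{1,\beta+2s-1}$ according to whether $\beta+2s\le 1$ or $\beta+2s>1$. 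Iterating this step finitely many times — each iteration raises the available Hölder exponent by (almost) $2s$ — I reach, after finitely many rounds, the regime where $u\in C^{1,\gamma}$ for some $\gamma>0$. Then one more application of the bootstrap with the $C^{1,\gamma}$ information: $W'(u)$ is then $C^{1,\gamma}$ (again because $W\in C^2$, so $W''\in C^0$ and $W'(u)$ has first derivatives $W''(u)\nabla u$ which are $C^{0,\gamma'}$ for a suitable $\gamma'$), so Proposition~\ref{pro_sil} (or its higher-order analogue / Schauder estimates for $(-\Delta)^s$) upgrades $u$ to $C^{2,\alpha}(\R^n)$ for some $\alpha$ depending on $s$.

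I expect the main technical point — and the only place where a little care is genuinely needed — to be making the iteration precise and tracking the admissible exponent at each stage, in particular handling the threshold case $\beta+2s=1$ (where one simply picks $\beta$ slightly smaller to stay in the strict inequality $\beta+2s<1$ or $\beta+2s>1$) and verifying that after finitely many steps one exceeds the value $1$ so as to land in the $C^{1,\cdot}$ range, and then that one further step produces $C^{2,\cdot}$. The composition estimate $\|W'(u)\|_{C^{k,\gamma}}\lesssim \|W\|_{C^{k+1}}(1+\|u\|_{C^{k,\gamma}})^{?}$ is routine given $W\in C^2$ and $u$ bounded, so I would state it without belaboring the constants. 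Since the statement only asserts existence of some $\alpha=\alpha(s)>0$ with $u\in C^{2,\alpha}$, there is no need to optimize; the finite iteration plus one final Schauder step suffices, and I would simply remark that, as with Lemma~\ref{lem_co}, everything is local so the estimates also hold for solutions on bounded domains.
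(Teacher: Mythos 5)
Your bootstrap for the $C^0\to C^1$ stage is essentially what the paper does (iterate Proposition~\ref{pro_sil}/\ref{pro_sil2} a finite number of times, with the number of iterations depending on which dyadic interval $(1/(2k+2),\,1/(2k)]$ the exponent $s$ falls in), and that part is fine. The gap is in the final step, where you claim that $u\in C^{1,\gamma}$ forces $W'(u)\in C^{1,\gamma}$ because $\nabla\bigl(W'(u)\bigr)=W''(u)\nabla u$ ``is $C^{0,\gamma'}$.'' That composition estimate needs $W''$ to be H\"older continuous, i.e.\ $W\in C^{2,\gamma'}$; under the stated hypothesis $W\in C^2(\R)$, $W''$ is merely continuous, so $W''(u)$ need not be H\"older even when $u$ is Lipschitz, and the product $W''(u)\nabla u$ can fail to lie in any $C^{0,\gamma'}$ space. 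Thus you cannot feed $w=W'(u)$ into Proposition~\ref{pro_sil} to gain a second derivative. A second, more minor point: Proposition~\ref{pro_sil} as stated caps out at $C^{1,\cdot}$; it does not directly produce $C^{2,\cdot}$, so even if the composition were H\"older you would still need a ``higher-order analogue'' that the paper never states.

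The paper's proof avoids both problems by a small but essential change of variable: once $u\in C^{1,\alpha}$, differentiate the equation to get
\[
-(-\Delta)^s u'(x)=W''(u(x))\,u'(x),
\]
and observe that the right-hand side $w:=W''(u)\,u'$ is merely in $L^\infty(\R^n)$ (this uses only that $W''$ is continuous on the compact set $[-\|u\|_\infty,\|u\|_\infty]$ and that $u'\in L^\infty$). Then Proposition~\ref{pro_sil2}(ii), which only requires $w\in L^\infty$ and not $w$ H\"older, gives $u'\in C^{1,\alpha}$ for every $\alpha<2s-1$, hence $u\in C^{2,\alpha}$. This is exactly the device you need to substitute for your composition step: apply the Schauder estimate to $u'$, not to $u$, using the $L^\infty$ version of the estimate so that no H\"older continuity of $W''$ is required. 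With that change your outline goes through; the rest of your argument (the initial gain from Lemma~\ref{lem_co}, the finite iteration tracking the exponent, the side remark that one should pick the exponent to avoid the threshold $\beta+2s=1$, and the remark about locality) matches the paper.
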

\begin{proof}
Let $s\in (1/2,\, 1)$ and let $u$ in $L^{\infty}(\R^n)$ be a solution of the equation~(\ref{eq_giusta}). Then, $u\in C^{1,\alpha}(\R^n)$ with its $C^{1,\alpha}$ norm bounded as in Lemma~\ref{lem_co}(i). Moreover $u'$ satisfies
\begin{equation}\label{eq_giustader}
-(-\Delta)^su'(x)=W''(u(x))u'(x) \ \ \text{for any} \ x\in \R^n.
\end{equation}
By the hypothesis on $W$ 
and $u$, we can apply Proposition~\ref{pro_sil2}(ii) to the solution $u'$ of equation (\ref{eq_giustader}) with $w:=W''(u(x))u'(x)$. It follows that $u'$ belongs to $C^{1,\alpha}(\R^n)$ for any $\alpha<2s-1$ and thus the claim is proved.
\vspace{1mm}

Let $s =1/2$. Then, by the fact that $W$ is in $C^2$ together with the regularity of $u$ provided by Lemma~\ref{lem_co}(i), Proposition~\ref{pro_sil}(ii) with $w:=W'(u)$ yields that the function $u$ belongs to $C^{1, \alpha}(\R^n)$ for any $\alpha<1$. Now, we can argue as for the case $s\in(1/2,\, 1)$ to obtain the desired regularity for $u$ by Proposition~\ref{pro_sil2}(ii).
\vspace{1mm}

Finally, let $s\in (0,\, 1/2)$ and let $u \in L^{\infty}(\R^n)$ 
be a solution of~\eqref{eq_giusta}. So,
Lemma~\ref{lem_co}(i) yields 
$u\in 
C^{0,\alpha}(\R^n)$ for any $\alpha<2s$. Then, for $s\in (1/4,\,1/2)$ we can apply Proposition~\ref{pro_sil}(ii) and we get $u \in C^{1,\alpha+2s-1}(\R^n)$. Hence, $u'$ is well defined and it satisfies equation~\eqref{eq_giustader} with $w=W''(u)u'$ belonging to $C^{0,\alpha+2s-1}(\R^n)$ and again by Proposition~\ref{pro_sil}(ii) we get $u' \in C^{1,\alpha+2s-1}$ for any $\alpha<2s$.

For $s \in (0,\, 1/4]$, we can use Proposition~\ref{pro_sil}(i) in order to obtain $u \in C^{0, \alpha+2s}(\R^n)$ for any $\alpha <2s$. Thus, when $s\in (1/6,\, 1/4)]$, we can apply twice Proposition~\ref{pro_sil}(ii) arguing as in the case $s\in (1/4,\,1/2)$ and we get $u' \in C^{1,\alpha+4s-1}(\R^n)$, for any $\alpha<2s$.

By iterating the above procedure on $k\in \N$, we obtain that, when $s \in (1/(2k+2),\,1/2k]$, $u$ belongs to $C^{2,\alpha+2k-1}$ for any $\alpha<2s$.\qed
\end{proof}

\vspace{2mm}

We conclude this section observing that the equation we deal with
behaves well under limits:

\begin{lemma}\label{lem_limiti} Let $W\in C^1(\R)$.
For any~$k\in \N$, let~$u_k\in C(\R^n)\cap L^\infty(\R^n)$ be such that
$$
-(-\Delta)^s u_k(x) = W'(u_k(x)) \ \  \text{for any} \ x \in B_k.
$$
Suppose that~$\dys \sup_{k}\|u_k\|_{L^{\infty}(\R^n)}<\infty$ and that $u_k$ converges a.e. to a function~$u$.
Then,
$$ -(-\Delta)^s u(x) = W'(u(x)) \ \ \text{for any} \ x \in\R^n.
$$
\end{lemma}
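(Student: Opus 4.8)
The plan is to pass to the limit directly in the singular-integral equation, after using the interior regularity theory (Lemma~\ref{lem_co}) to upgrade the a.e.\ convergence of $u_k$ to a locally uniform one: away from the diagonal this is plain dominated convergence, while the delicate point is the near-diagonal part of the fractional Laplacian.

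First, since $u$ is the a.e.\ limit of functions uniformly bounded by $M:=\sup_k\|u_k\|_{L^\infty(\R^n)}$, we have $u\in L^\infty(\R^n)$ with $\|u\|_{L^\infty(\R^n)}\le M$, and $|W'(u(x))|\le M':=\max_{[-M,M]}|W'|$ for a.e.\ $x$ (recall that $W'$ is continuous, hence bounded on $[-M,M]$, since $W\in C^1(\R)$). Fix $\rho>0$; for every $k>2\rho$ the equation $-(-\Delta)^su_k=W'(u_k)$ holds on $B_{2\rho}\subset B_k$, with right-hand side bounded there by $M'$, uniformly in $k$. Hence the local version of Lemma~\ref{lem_co} (see the remark following Proposition~\ref{pro_sil2}) yields a bound for $u_k$ in $C^{0,\alpha}(B_\rho)$ for any $\alpha<2s$ when $s\in(0,1/2]$, and in $C^{1,\alpha}(B_\rho)$ for any $\alpha<2s-1$ when $s\in(1/2,1)$, with constant independent of $k$. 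By Arzel\`a--Ascoli and uniqueness of the a.e.\ limit, $u_k\to u$ uniformly on every ball; in particular $u_k\to u$ locally uniformly, $u\in C(\R^n)$, and $u$ obeys the same local H\"older (respectively $C^1$) bounds.

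Now fix $x\in\R^n$ and $\rho>0$, and take $k>|x|$, so that the equation holds at $x$:
\begin{equation}\label{split_lim}
W'(u_k(x))=\int_{\CC B_\rho(x)}\frac{u_k(y)-u_k(x)}{|x-y|^{n+2s}}\,dy
+\,P.V.\!\int_{B_\rho(x)}\frac{u_k(y)-u_k(x)}{|x-y|^{n+2s}}\,dy=:A_k+B_k.
\end{equation}
In $A_k$ the integrand is dominated by $2M\,|x-y|^{-(n+2s)}$, which is integrable on $\CC B_\rho(x)$ since $2s>0$; dominated convergence gives $A_k\to\int_{\CC B_\rho(x)}\bigl(u(y)-u(x)\bigr)|x-y|^{-(n+2s)}\,dy$. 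For $B_k$ we symmetrize, using the evenness of the kernel,
$$
B_k=-\frac{1}{2}\int_{B_\rho}\frac{2u_k(x)-u_k(x+z)-u_k(x-z)}{|z|^{n+2s}}\,dz,
$$
so that the $k$-uniform interior estimates above furnish a $k$-independent integrable majorant on $B_\rho$; together with $u_k\to u$ locally uniformly, dominated convergence yields $B_k\to P.V.\!\int_{B_\rho(x)}\bigl(u(y)-u(x)\bigr)|x-y|^{-(n+2s)}\,dy$. Finally $W'(u_k(x))\to W'(u(x))$, by continuity of $W'$ and $u_k(x)\to u(x)$. Passing to the limit in~\eqref{split_lim} and recombining the two integrals into a single principal value over $\R^n$, we get $W'(u(x))=-(-\Delta)^su(x)$; as $x\in\R^n$ is arbitrary, the lemma follows.

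The step I expect to be the main obstacle is the passage to the limit in the near-diagonal term $B_k$: one must produce a majorant near the singularity that is uniform in $k$, and the natural way to do this is to combine the uniform interior estimates of Lemma~\ref{lem_co} with the extra cancellation obtained by replacing first differences by symmetric second differences, as in the displayed identity for $B_k$. A more robust alternative, which avoids the near-diagonal issue entirely, is to first pass to the limit in the weak formulation $\int_{\R^n}u_k\,(-\Delta)^s\varphi\,dx=\int_{\R^n}W'(u_k)\,\varphi\,dx$, $\varphi\in C_c^\infty(\R^n)$ --- which is immediate by dominated convergence, since $(-\Delta)^s\varphi$ decays like $|x|^{-(n+2s)}$ and the $u_k$ are uniformly bounded --- and then to invoke the (local) regularity theory recalled above, together with the distributional-to-viscosity remark in the footnote on page~\pageref{nota}, to return to the pointwise identity.
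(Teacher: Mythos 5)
Your main argument is different from the paper's and has a genuine gap at exactly the point you flag as ``the main obstacle'': the near-diagonal term $B_k$. The uniform interior estimates you invoke (Lemma~\ref{lem_co}, Proposition~\ref{pro_sil2}) are \emph{strictly subcritical}. Since the lemma assumes only $W\in C^1$, the right-hand side $W'(u_k)$ is uniformly bounded but not uniformly H\"older, so the Schauder theory gives $u_k\in C^{0,\alpha}(B_\rho)$ uniformly for every $\alpha<2s$ when $s\le1/2$, and $u_k\in C^{1,\alpha}(B_\rho)$ uniformly for every $\alpha<2s-1$ when $s>1/2$ --- but never $\alpha\ge 2s$ (resp.\ $\alpha\ge 2s-1$). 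The resulting second-difference majorant for the integrand of $B_k$ is $C|z|^{\alpha-n-2s}$ (resp.\ $C|z|^{1+\alpha-n-2s}$), which is \emph{not} integrable near $z=0$ precisely because $\alpha<2s$ (resp.\ $1+\alpha<2s$). Hence the dominated-convergence step for $B_k$ does not close: the small-ball contribution to the symmetrized integral is not controlled uniformly in $k$, and with the stated hypotheses one cannot even show that it tends to zero at a $k$-uniform rate. A bootstrap to supercritical regularity would require something like $W\in C^{1,\beta}$; as written, your proof relies on regularity that the hypotheses do not provide.

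The ``more robust alternative'' you sketch at the end is, in essence, the paper's actual proof. One multiplies the equation by $\phi\in C^\infty_c(\R^n)$, integrates, and moves $(-\Delta)^s$ onto $\phi$ so the relevant quantity becomes $\int u_k\,(-\Delta)^s\phi\,dx$; one checks, exactly as you indicate, that $(-\Delta)^s\phi\in L^1(\R^n)$ thanks to the smoothness and compact support of $\phi$; and one then passes to the limit by dominated convergence using only $\sup_k\|u_k\|_{L^\infty}<\infty$ and a.e.\ convergence. The singularity of the kernel is absorbed by $\phi$, not by $u_k$, which is why this route works with $W\in C^1$ alone and without upgrading a.e.\ convergence to locally uniform convergence. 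You should promote this to the main proof rather than leaving it as a remark, and drop the pointwise splitting into $A_k$ and $B_k$ unless you also strengthen the hypothesis on $W$.
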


\begin{proof}
Given any~$\phi\in C^\infty_0(\R)$ supported in $B_k$,
\begin{eqnarray*}\int_\R
W'(u_k(x))\,\phi(x)\,dx \! &=&\! \int_\R\left[
\int_{\R}\frac{u_k(y)-u_k(x)}{|x-y|^{n+2s}}\,dy\right]\,\phi(x)\,dx
\\
\\ &=& \! \int_{\R}\int_{\R}
\frac{u_k (x)\,\big(\phi(y)-\phi(x)\big)}{|x-y|^{n+2s}}\,dx\,dy.
\end{eqnarray*}
\vspace{1mm}

Moreover,
\begin{eqnarray*}
&&\int_{\R} \Big| \int_{\R}\frac{\phi(x)-\phi(y)}{|x-y|^{n+2s}}\,dy \Big| dx \ =  \ \int_{\R} \Big| \int_{\R}\frac{|\phi(x)-\phi(x-y)|}{|y|^{n+2s}}\,dy \Big| dx \\
\\
&& \quad \leq \ \int_{\R}\!dx \left[ \Big| \int_{B_1}\frac{\phi(x)-\phi(x+y)+\nabla\phi(x)y}{|y|^{n+2s}}\,dy \Big| + \Big| \int_{\CC B_1}\frac{2\|\phi\|_{L^\infty}}{|y|^{n+2s}}\,dy \Big| \right] dx \\
\\
&& \quad \ \leq  \int_{\R} \!dx \Big| \int_0^1 \frac{\|\nabla^2\phi\|_{L^{\infty}}}{r^{n+2s}}r^{n+1}\,dr 
+ \int_1^{\infty}\frac{2\|\phi\|_{L^{\infty}}}{r^{1+2s}}\,dr \Big| \ < \ +\infty.
\end{eqnarray*}
\vspace{1mm}

Thus, by
Dominated Convergence Theorem,
\begin{equation*}
\begin{split}
\int_\R W'(u (x))\phi(x)\,dx\,
\! &= \int_{\R}\int_{\R}
\frac{u (x)\,\big(\phi(y)-\phi(x)\big)}{|x-y|^{n+2s}}\,dx\,dy
\\
\\ &= \int_\R\left[
\int_{\R}\frac{u (y)-u (x)}{|x-y|^{n+2s}}\,dy\right]\,\phi(x)\,dx
\\
\\&= \int_\R -(-\Delta)^s u (x)\phi(x)\,dx,
\end{split}
\end{equation*}
which gives the desired claim, since~$\phi$ is arbitrary.\qed
\end{proof}

\vspace{2mm}

\subsection{Construction of barriers}\label{sec_preliminare}

We start by recalling the construction
of an useful barrier,
that is used in \cite{sv2011,sv0,sv1} and also here in the asymptotic analysis of the one-dimensional minimizers of the energy~\eqref{energia} (see the forthcoming Proposition~\ref{pro_control}). The proof can be found in~\cite[Lemma 3.1]{sv0}; it relies on a fine construction around the power function $t\mapsto|t|^{-2s}$ together with some estimates proved here in the following.

\begin{lemma}\label{tau}
{\rm(\cite{sv0})}. Let~$n\ge1$. 
Given any~$\tau>0$, there exists
a constant $C> 1$, possibly
depending on~$n$, $s$ and $\tau$,
such that the following holds: for any $R\ge
C$, there exists a rotationally symmetric function
\begin{equation}\label{sopra}
w \in C\big( \R^n ; [-1+C R^{-2s},\,1]\big),
\end{equation}
with
\begin{equation}\label{sopra2}{\mbox{
$w=1$ in~$\CC B_R$,}}\end{equation} such that
\begin{equation}\label{al1}
\int_{\R^n}\frac{w(y)-w(x)}{{|x-y|^{n+2s}}}\,dy
\le \tau\big(1+w(x)\big)\end{equation}
and
\begin{equation}\label{al2}
\frac{1}{C}(R+1-|x|)^{-2s} \leq 1+w(x)\le C \big( R+1-|x|\big)^{-2s}
\end{equation}
for any~$x\in B_R$.
\end{lemma}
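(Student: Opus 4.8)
The plan is to take $w$ rotationally symmetric, built from a one-dimensional profile in the variable $d=d(x):=R+1-|x|$, which ranges over $[1,R+1]$ as $x$ varies in $B_R$. Fix $\tau>0$. I would choose a large crossover length $T=T(\tau)$ and a decreasing profile $\eta=\eta_R\colon[1,R+1]\to(0,2]$ of class $C^{1,1}$, equal to the constant $2$ on $[1,T]$ and comparable to $2(t/T)^{-2s}$ on $[2T,R+1]$, with $\eta'(1)=\eta'(R+1)=0$ and $c(T)^{-1}t^{-2s}\le\eta(t)\le c(T)\,t^{-2s}$ uniformly in $R$ (the flatness at $t=1$ lets $w$ glue to the exterior constant, the flatness at $t=R+1$ keeps $w$ smooth at the centre of the ball). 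Then I set $w(x):=-1+\eta(R+1-|x|)$ on $B_R$ and $w\equiv1$ on $\CC B_R$. Since $\eta\equiv2$ near $t=1$, $w$ coincides with the constant $1$ on the shell $\{R-T\le|x|\le R\}$; hence $w\in C^{1,1}(\R^n)$ (in particular continuous), $|w|\le1$, and $1+w(x)=\eta(R+1-|x|)$, so \eqref{sopra}, \eqref{sopra2} and the two-sided bound \eqref{al2} follow at once from the monotonicity and the power bounds on $\eta$, with $C$ a constant depending on $T$, hence on $\tau$.

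The heart of the proof is the fractional inequality \eqref{al1}. If $x$ lies in the shell $\{R-T\le|x|\le R\}$, then $w(x)=1=\max_{\R^n}w$, so $\int_{\R^n}\frac{w(y)-w(x)}{|x-y|^{n+2s}}\,dy\le0\le\tau(1+w(x))$ and there is nothing to prove; thus assume $|x|<R-T$, i.e.\ $d=R+1-|x|>T$, and split
\[
\int_{\R^n}\frac{w(y)-w(x)}{|x-y|^{n+2s}}\,dy
=\int_{\{|x-y|<d/4\}}\frac{w(y)-w(x)}{|x-y|^{n+2s}}\,dy
+\int_{\{|x-y|\ge d/4\}}\frac{w(y)-w(x)}{|x-y|^{n+2s}}\,dy .
\]
On $B_{d/4}(x)$ the profile $\eta$ is evaluated at arguments comparable to $d$, so $w$ is $C^{1,1}$ there with $\|D^2w\|_{L^\infty(B_{d/4}(x))}\lesssim(1+w(x))\,d^{-2}$; after cancelling the odd part of the integrand the first term is thus $\lesssim(1+w(x))\,d^{-2}\int_{\{|z|<d/4\}}|z|^{2-n-2s}\,dz\lesssim(1+w(x))\,d^{-2s}$. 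For the second term I would use $w(y)-w(x)\le 1+w(y)=\eta(R+1-|y|)$ to bound the positive part by $\int_{\{|x-y|\ge d/4\}}\frac{\eta(R+1-|y|)}{|x-y|^{n+2s}}\,dy$ (the negative part only helps), which by the power decay of $\eta$ and the size of the kernel at scales $\gtrsim d>T$ is $\le\big(C_0\,d^{-2s}+\varepsilon(T)\big)(1+w(x))$ with $\varepsilon(T)\to0$ as $T\to+\infty$. Since $d>T$, the two contributions together give $\int_{\R^n}\frac{w(y)-w(x)}{|x-y|^{n+2s}}\,dy\le\big(C_0\,T^{-2s}+\varepsilon(T)\big)(1+w(x))$, and picking $T=T(\tau)$ large makes the bracket $\le\tau$; this is where the smallness of $\tau$ is purchased, at the price of letting $C=C(\tau)$.

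The main obstacle is making the second integral genuinely small \emph{also in the thin transition layer} where $\eta$ interpolates between the constant $2$ and the power, i.e.\ for $x$ with $|x|$ just below $R-T$: there $1+w(x)$ is still of order one while $x$ sits next to the region $\{w\equiv1\}$, so one cannot simply estimate $|w(y)-w(x)|\le2$ and must instead exploit the $C^{1,1}$ gluing (and the sign of $\eta''$) together with the near-cancellation between the contributions of $\{|y|>|x|\}$ and $\{|y|<|x|\}$. This is precisely the ``fine construction around the power function $t\mapsto|t|^{-2s}$'' alluded to above; the underlying one-dimensional computations and the kernel estimates for power-type functions are carried out in the results of the remainder of this subsection and, in complete detail, in \cite[Lemma~3.1]{sv0}.
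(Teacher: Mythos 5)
The paper does not contain a proof of this lemma: it is quoted from \cite[Lemma 3.1]{sv0}, with only the remark that it rests on ``a fine construction around the power function $t\mapsto|t|^{-2s}$'' together with the kernel estimates collected later in the Appendix. Your sketch is a plausible reconstruction of that construction — a radial barrier $w=-1+\eta(R+1-|x|)$ with a flat cap near $\partial B_R$, power decay $\eta(t)\sim t^{-2s}$ in the bulk, and a near/far splitting of the kernel at scale $d/4$, $d:=R+1-|x|$ — and the near-field part of the argument (Hessian bound $\lesssim(1+w(x))\,d^{-2}$ plus $\int_{B_{d/4}}|z|^{2-n-2s}\,dz\sim d^{2-2s}$) is sound.

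The far-field estimate as you have written it, however, cannot close. Bounding
$\int_{\{|x-y|\ge d/4\}}\frac{w(y)-w(x)}{|x-y|^{n+2s}}\,dy$
by the positive part
$\int_{\{|x-y|\ge d/4\}}\frac{1+w(y)}{|x-y|^{n+2s}}\,dy$
is too wasteful: the contribution of the outer region $\{w\equiv1\}$ (where $1+w=2$) is of order $(d-T)^{-2s}$, which is \emph{comparable} to $1+w(x)\sim d^{-2s}$; the ratio is of order one, not $C_0\,d^{-2s}+\varepsilon(T)$, and no choice of $T$ makes it small. Inequality \eqref{al1} is therefore not a one-sided bound on the positive part of the kernel: it genuinely requires the cancellation between the contribution of $\{|y|>|x|\}$ (where $w(y)\ge w(x)$) and that of $\{|y|<|x|\}$ (where $w(y)\le w(x)$), i.e.\ the near-vanishing of the fractional Laplacian of the specific power $t^{-2s}$, together with the sign of $\eta''$ across the gluing. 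You diagnose this correctly in your final paragraph as ``the main obstacle'' and then defer it in its entirety to \cite[Lemma 3.1]{sv0}. Since the paper itself defers in exactly the same way, your treatment mirrors the paper's at the level of citation; but as a standalone argument the displayed far-field bound is false, and the cancellation you flag is the whole mathematical content of the lemma rather than a technicality to be alluded to.
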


\vspace{2mm}

Now, we consider the following equation related to the fractional operator $(-\Delta)^s$ on the real line,
\begin{equation}\label{eq_auto}
-(-\Delta)^sv(x) - \alpha v(x)=0,
\end{equation}
where $\alpha$ is a positive constant.
Precisely, in Corollary~\ref{cor_super} we show that the function $v$ being a subsolution of equation~\eqref{eq_auto} away from the origin is bounded (up to a multiplicative constant) by the function $x\mapsto|x|^{-(1+2s)}$. This estimate will be crucial in the analysis of the global minimizers of the functionals $\FF$  
(see Theorem~\ref{theorem_unod}).

\vspace{1mm}

First, we need to prove the following 1-D result.

\begin{lemma}\label{sop}
Let~$\eta\in C^2(\R;(0,+\infty))$, with~$\| \eta\|_{C^2(\R)}<+\infty$, and
$$ \eta(x)=\frac{1}{|x|^{1+2s}}\qquad{\mbox{ for any }}x\in\R\setminus
(-1,1).$$
Then there exists~$\kappa\in(0,+\infty)$, possibly depending
on~$s$ and~$\eta$, such that
$$ \limsup_{x\rightarrow\pm\infty}
\frac{
-(-\Delta)^s \eta(x)
}{\eta(x)}\le\kappa.$$
\end{lemma}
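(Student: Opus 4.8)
The claim is a decay estimate for $-(-\Delta)^s\eta$ relative to $\eta$, where $\eta$ equals the power $|x|^{-(1+2s)}$ outside $(-1,1)$ and is a positive $C^2$ function inside. The strategy is to fix a large $|x|$, split the defining singular integral
$$
-(-\Delta)^s\eta(x) = \int_\R \frac{\eta(y)-\eta(x)}{|x-y|^{1+2s}}\,dy
$$
into three regions: a neighbourhood $|y-x|<|x|/2$ of the singularity, the bounded ``core'' region $|y|\le 1$ where $\eta$ is merely $C^2$ and bounded, and the remaining far region. In each region I will bound the contribution by a constant times $\eta(x)=|x|^{-(1+2s)}$ and then let $|x|\to\pm\infty$.

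First I would handle the near-diagonal part $\{|y-x|<|x|/2\}$: here $\eta$ is $C^2$ with $\|\eta\|_{C^2}<+\infty$, so the usual second-order Taylor cancellation (write $\eta(y)-\eta(x) = \eta'(x)(y-x) + O(\|\eta''\|_\infty|y-x|^2)$; the linear term integrates to zero by symmetry in principal value) gives a bound $C\|\eta''\|_\infty\int_{|y-x|<|x|/2}|x-y|^{1-2s}\,dy \le C\|\eta''\|_\infty |x|^{2-2s}$. Since $|x|^{2-2s} = |x|^{3}\cdot|x|^{-(1+2s)} = |x|^3\eta(x)$, this is NOT yet $O(\eta(x))$ — so in fact I should shrink the near region to $\{|y-x|<1\}$ (legitimate for $|x|>2$, since there $\eta$ is the smooth power and its own $C^2$ norm on that unit ball decays like $|x|^{-(3+2s)}$), giving a contribution $\le C|x|^{-(3+2s)}\le C\eta(x)$.

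Next, the core region $\{|y|\le 1\}$: for $|x|>2$ one has $|x-y|\ge |x|-1\ge |x|/2$, $\eta$ is bounded there by $\|\eta\|_\infty$, and $\eta(x)\le\eta(x)$, so
$$
\left|\int_{|y|\le1}\frac{\eta(y)-\eta(x)}{|x-y|^{1+2s}}\,dy\right| \le (2\|\eta\|_\infty)\cdot\frac{2}{(|x|/2)^{1+2s}} \le C|x|^{-(1+2s)} = C\eta(x).
$$
Finally the far region $\{|y|>1,\ |y-x|\ge 1\}$ is the main technical point: here both $\eta(y)=|y|^{-(1+2s)}$ and $\eta(x)=|x|^{-(1+2s)}$ are explicit powers, and I must show $\int \frac{|y|^{-(1+2s)}-|x|^{-(1+2s)}}{|x-y|^{1+2s}}\,dy = O(|x|^{-(1+2s)})$. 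This is a scaling computation: substitute $y = |x|\,z$ (and assume $x>0$ WLOG by the symmetry of $\eta$), which turns the integral into $|x|^{-2s}\cdot|x|^{-(1+2s)}\int \frac{|z|^{-(1+2s)}-1}{|1-z|^{1+2s}}\,dz$ over the rescaled far region; the remaining $z$-integral converges (the $z=1$ singularity is cut off, $|z|\to\infty$ decay is $|z|^{-(2+4s)}$, and near $z=0$ the integrand is $\sim |z|^{-(1+2s)}$ which is integrable in one dimension since $1+2s<2$... wait, $1+2s$ can exceed $1$), so I should double-check integrability near $z=0$: there the numerator blows up like $|z|^{-(1+2s)}$ and this is integrable in $\R^1$ only if $1+2s<1$, which fails. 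The fix is that near $z=0$ I am really looking at $y$ near $0$, i.e. back inside the core region $|y|\le 1$ already handled — so the far-region $z$-integral is actually over $|z|\ge 1/|x|$, and on $|z|<1$ (but $\ge 1/|x|$) the contribution is $\le |x|^{-2s}|x|^{-(1+2s)}\int_{1/|x|<|z|<1}|z|^{-(1+2s)}\,dz \le |x|^{-2s}|x|^{-(1+2s)}\cdot C|x|^{2s} = C|x|^{-(1+2s)} = C\eta(x)$, while on $|z|\ge 1$ the integral is a fixed convergent constant. Collecting all three pieces gives $|-(-\Delta)^s\eta(x)| \le C\eta(x)$ for all large $|x|$, whence $\limsup_{x\to\pm\infty} \frac{-(-\Delta)^s\eta(x)}{\eta(x)} \le \kappa := C$, with $\kappa$ depending only on $s$ and $\|\eta\|_{C^2}$.

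The main obstacle, as the discussion above shows, is keeping the bookkeeping of the region near $y=0$ honest: the naive global rescaling produces a non-integrable singularity at $z=0$ that only disappears once one remembers that part of the range has already been absorbed into the bounded core estimate. Once the regions are chosen as $\{|y-x|<1\}$, $\{|y|\le 1\}$, and the genuine far set $\{|y|>1,\ |y-x|>1\}$, every estimate is an elementary power-counting bound of the stated form $C\eta(x)$, and the conclusion is immediate.
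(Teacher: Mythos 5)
Your decomposition into the near-diagonal set $\{|y-x|<1\}$, the bounded ``core'' $\{|y|\le1\}$, and the far set $\{|y|>1,\,|y-x|\ge1\}$ is a valid alternative to the paper's, and the near-diagonal and core estimates are correct (in particular the observation that $\|\eta''\|_{L^\infty(B_1(x))}\sim|x|^{-(3+2s)}$ replaces the global $\|\eta''\|_\infty$ is exactly the right fix). The paper does essentially the same thing in the original variables: it introduces the Taylor-corrected kernel $i(x,y)$ with a cutoff $\chi_{(-1/4,1/4)}(x-y)$, handles $(-1,1)$ by bounded convergence (getting the explicit limit $\int_{-1}^1\eta$, where you are content with an upper bound), and then partitions $\R\setminus(-1,1)$ into four pieces $P,Q,R,S$ according to $|y|$ versus $|x|/2$, $2|x|$ and $|x-y|$ versus $1/4$, each of which it estimates directly by power counting. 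Your alternative is to rescale $y=|x|z$ in the far set and reduce to one $z$-integral; this is a cleaner picture but it imports extra bookkeeping, which is where a gap appears.

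The gap is at $z=1$. After the substitution, the rescaled integrand $\frac{|z|^{-(1+2s)}-1}{|1-z|^{1+2s}}$ behaves like $|z-1|^{-2s}$ near $z=1$, and for $s\ge1/2$ this is \emph{not} locally integrable. The constraint $|y-x|>1$ translates to the $|x|$-dependent cutoff $|z-1|>1/|x|$, which restores integrability but yields a bound of order $|x|^{2s-1}$ (or $\log|x|$ when $s=1/2$) — so your claim that ``on $|z|\ge1$ the integral is a fixed convergent constant'' is false for $s\ge1/2$. Relatedly, your bound on $\{1/|x|<|z|<1\}$ silently replaces the integrand by $|z|^{-(1+2s)}$, dropping the $|1-z|^{1+2s}$ denominator; that replacement is only legitimate with $|z|$ bounded away from $1$, which is the same omission in another guise. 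Fortunately the omitted $z\approx1$ contribution is at most $\lesssim|x|^{2s}$ even with the crude numerator bound $\lesssim1$ (since $\int_{|z-1|>1/|x|}|z-1|^{-(1+2s)}\,dz\lesssim|x|^{2s}$), hence after multiplying by the prefactor $|x|^{-(1+4s)}$ it is still $O(\eta(x))$; so the plan does go through, but the $z\approx1$ piece must be carried out explicitly just as you did for $z\to0$. The paper avoids this entirely by treating the regime $|y|\sim|x|$, $|x-y|\ge1/4$ (its region $P$) in one stroke, using only $|i(x,y)|\lesssim|x|^{-(1+2s)}|x-y|^{-(1+2s)}$ and $\int_{|x-y|\ge1/4}|x-y|^{-(1+2s)}\,dy=C_s$.
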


\begin{proof} We will denote by~$C$ suitable positive
quantities, possibly different from line to line,
and possibly depending on~$s$ and~$\eta$.
For all~$(x,y)\in\R^2$ with~$|x|\ge2$, we define
$$ i(x,y):=
\frac{
\eta(y)-\eta(x)-\chi_{(-1/4,1/4)}(x-y)\,\eta'(x)(y-x)}{
|x-y|^{1+2s}}.$$
For any fixed~$y\in\R$, we have that
\begin{equation}\label{BB6}\begin{split}&
\lim_{x\rightarrow\pm\infty}
|x|^{1+2s}i(x,y)
=\lim_{x\rightarrow\pm\infty}
\frac{|x|^{1+2s}}{|x-y|^{1+2s}}
\big( \eta(y)-\eta(x)\big)=\eta(y).
\end{split}\end{equation}
Also,
if~$|y|\le1$ and~$|x|\ge 2$, we have that~$|x-y|\ge|x|-|y|\ge|x|/2$
and so
\begin{equation}\label{BB5}
\begin{split}&
|x|^{1+2s} |i(x,y)|
=\frac{|x|^{1+2s}
\big|\eta(y)-\eta(x)\big|}{|x-y|^{1+2s}}
\le \dys 16\sup_{\R}|\eta|.\end{split}
\end{equation}
Using~\eqref{BB6}, \eqref{BB5}
and the Bounded Convergence Theorem, we conclude that
\begin{equation}\label{C18}\begin{split}&
\lim_{x\rightarrow\pm\infty} |x|^{1+2s}
\int_{-1}^{1}
\frac{
\eta(y)-\eta(x)-\chi_{(-1/4,1/4)}(x-y)\,\eta'(x)(y-x)}{     
|x-y|^{1+2s}}\,dy\\
& \qquad
=\int_{-1}^{1}\lim_{x\rightarrow\pm\infty} |x|^{1+2s}  
i(x,y)\,dy
=\int_{-1}^1 \eta(y)\,dy.
\end{split}\end{equation}
Now, fixed~$|x|\ge2$,
we estimate the contribution in~$\R\setminus(-1,1)$.
We write~$\R\setminus(-1,1)=P\cup Q\cup R\cup S$, where
\begin{eqnarray*}
P &=& \Big\{ y\in \R\setminus(-1,1) {\mbox{ s.t. }} |x|/2< |y|\le 2|x|
{\mbox{ and }} |x-y|\ge1/4\Big\},\\
Q &=& \Big\{ y\in \R\setminus(-1,1) {\mbox{ s.t. }} |x|/2< |y|\le 2|x|  
{\mbox{ and }} |x-y|< 1/4\Big\},\\
R &=& \Big\{ y\in \R\setminus(-1,1) {\mbox{ s.t. }} |y|> 2|x|  
\Big\},\\
S &=& \Big\{ y\in \R\setminus(-1,1) {\mbox{ s.t. }} |y|\le |x|/2  
\Big\}.
\end{eqnarray*}
We observe that, if~$y\in P$,
\begin{eqnarray}\label{stima_p}
|i(x,y)| & = &
\frac{|
\eta(y)-\eta(x)|}{
|x-y|^{1+2s}}\, \le \,
\frac{|
\eta(y)|+|\eta(x)|}{
|x-y|^{1+2s}} 
 \,
 = \, \frac{(1/|y|^{1+2s})
+(1/|x|^{1+2s})}{
|x-y|^{1+2s}}
\nonumber \\
& \le & \frac{C}{|x|^{1+2s}
|x-y|^{1+2s}}.
\end{eqnarray}
As a consequence,
\begin{eqnarray*}
|x|^{1+2s}\int_P i(x,y)\,dy \le C
\int_P \frac{dy}{|x-y|^{1+2s}}
\le C
\int_{\{|x-y|\ge1/4\}} \frac{dy}{|x-y|^{1+2s}} 
\le C.\end{eqnarray*}
Moreover, if~$y\in Q$, we can use the Taylor expansion
of the function~$1/|t|^{1+2s}$ to obtain that
\begin{eqnarray*}
&& \eta(y)-\eta(x)-\chi_{(-1/4,1/4)}(x-y)\,\eta'(x)(y-x)\\
&&  \qquad \qquad \qquad \qquad  \qquad
= \, \eta(y)-\eta(x)-\eta'(x)\cdot(y-x)
\\ &&\qquad \qquad \qquad \qquad  \qquad
= \, \frac{1}{|y|^{1+2s}}-\frac{1}{|x|^{1+2s}}
+\frac{(1+2s)}{|x|^{3+2s}}x(y-x)
\\ && \qquad \qquad \qquad \qquad  \qquad
= \, \frac{(1+2s)(2+2s)}{|\xi|^{3+2s}}|x-y|^2,
\end{eqnarray*}
for an appropriate $\xi$ which lies on the
segment joining~$x$ to~$y$. Notice also that if~$y\in Q$,
then~$y\ge0$ if and only if~$x\ge0$, therefore both~$x$
and~$y$ lie
either in~$[|x|/2,+\infty)$ or in~$(-\infty, -|x|/2]$.
In any case,~$|\xi|\ge |x|/2$ and so,
for any~$y\in Q$,
\begin{eqnarray*}
|i(x,y)| & \!= \! &
\frac{|\eta(y)-\eta(x)-\chi_{(-1/4,1/4)}(x-y)\,\eta'(x)(y-x)|}{|x-y|^{1+2s}}\\
\\ &= & \frac{C}{|\xi|^{3+2s}} |x-y|^{1-2s}
\ \le \ \frac{C}{|x|^{3+2s}} |x-y|^{1-2s} .
\end{eqnarray*}
As a consequence,
\begin{eqnarray*}
|x|^{1+2s}\int_Q i(x,y)\,dy
&\le&  \frac{C}{|x|^2}
\int_Q {|x-y|^{1-2s}}
\\
&\le & \frac{C}{|x|^2}
\int_{|x-y|<1/4} {|x-y|^{1-2s}}
\ \le \ \frac{C}{|x|^2} \ \le \ C.
\end{eqnarray*}
Furthermore, if~$y\in R$, we have that~$|x-y|\ge |y|-|x|\ge |x|>1/4$,
 thus we can estimate the function $i(x,y)$ as in~\eqref{stima_p} and we obtain
\begin{eqnarray*}
|i(x,y)| \, \le \, \frac{C}{|x|^{1+2s}|x-y|^{1+2s}}.
\end{eqnarray*}
In particular,
\begin{eqnarray*}
|x|^{1+2s}\int_R i(x,y)\,dy
 & \le &  C\!
\int_{\{|y|\ge2|x|\}} \frac{dy}{|x-y|^{1+2s}}
\\
&\le &  C\!
\int_{\{|x-y|\ge|x|\}} \frac{dy}{|x-y|^{1+2s}}
\ = \ \frac{C}{|x|^{2s}} \ \le \ C.
\end{eqnarray*}
As for the last contribution, if~$y\in S$
then~$|x-y|\ge|x|-|y|\ge|x|/2\ge 1$ and so
\begin{eqnarray*} 
|i(x,y)| \! & \le & \! \frac{|\eta(y)|+|\eta(x)|}{|x-y|^{1+2s}}
 =  \frac{(1/|y|^{1+2s})+(1/|x|^{1+2s})
}{|x-y|^{1+2s}} \\
& \le & 
\frac{C}{|x|^{1+2s}|y|^{1+2s}}
.\end{eqnarray*}
Accordingly,
\begin{eqnarray*}
&& |x|^{1+2s}\int_S i(x,y)\,dy \, \le \, C\!
\int_{\{
1\le|y|\le |x|/2\}
} \frac{dy}{|y|^{1+2s}}
\, \le \, C.
\end{eqnarray*}
All in all, we obtain that
\begin{eqnarray*}
&& \limsup_{x\rightarrow\pm\infty} |x|^{1+2s}
\int_{\R\setminus(-1,1)}
\frac{     
\eta(y)-\eta(x)-\chi_{(-1/4,1/4)}(x-y)\,\nabla\eta(x)\cdot(y-x)}{
|x-y|^{1+2s}}\,dy
\\
&&\qquad=
\limsup_{x\rightarrow\pm\infty} |x|^{1+2s}\left(
\int_{P}i(x,y)\,dy+
\int_{Q}i(x,y)\,dy\right.\\
&&\qquad\quad\left.+\int_{R}i(x,y)\,dy+\int_{S}i(x,y)\,dy\right)
\ \le  \ C.
\end{eqnarray*}
{F}rom this and~\eqref{C18}, the desired result plainly
follows.\qed
\end{proof}

\begin{corollary}\label{cor_super}
Let~$\alpha$, $\beta>0$. 
Let~$v$ be a bounded function in $C^{0,\gamma}(\R)$, with $\gamma>2s$,
such that~$-(-\Delta)^s v(x)\ge\alpha v(x)$
for any~$x\in\R\setminus(-\beta,\beta)$.
Then, there exists a constant~$\bar{C}>0$, possibly depending on~$s$,
$\alpha$ and~$\beta$, such that
$$ v(x)\leq\frac{\bar{C}}{|x|^{1+2s}}\qquad{\mbox{
for any }}x\in\R.$$
\end{corollary}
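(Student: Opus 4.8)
The plan is to build a supersolution barrier out of the function $\eta$ from Lemma~\ref{sop} and then compare $v$ against a suitable multiple of a rescaled copy of it. Let $\eta$ and $\kappa$ be as in Lemma~\ref{sop}, so $\eta\in C^2(\R;(0,+\infty))$ has finite $C^2$ norm, $\eta(x)=|x|^{-(1+2s)}$ for $|x|\ge1$, and $\limsup_{x\to\pm\infty}\big(-(-\Delta)^s\eta(x)\big)/\eta(x)\le\kappa$. For $\lambda\in(0,1]$ put $\eta_\lambda(x):=\eta(\lambda x)$; by the scaling of the fractional Laplacian,
$$
-(-\Delta)^s\eta_\lambda(x)=\lambda^{2s}\,\big(-(-\Delta)^s\eta\big)(\lambda x),
$$
whence $\limsup_{x\to\pm\infty}\big(-(-\Delta)^s\eta_\lambda(x)\big)/\eta_\lambda(x)\le\lambda^{2s}\kappa$. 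Fixing $\lambda$ small enough that $\lambda^{2s}\kappa<\alpha$, there is $R_0\ge\beta$, depending only on $s,\alpha,\beta$ (and $\eta$), such that $-(-\Delta)^s\eta_\lambda(x)\le\alpha\,\eta_\lambda(x)$ for all $|x|\ge R_0$. Moreover $\eta_\lambda$ is bounded, strictly positive, of class $C^2$ with bounded derivatives, and $\eta_\lambda(x)=\lambda^{-(1+2s)}|x|^{-(1+2s)}$ for $|x|\ge1/\lambda$.

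\textbf{Step 2: the comparison function.} Let $K:=\sup_\R|v|<+\infty$ and $c_0:=\min_{|x|\le R_0}\eta_\lambda>0$, and set $\bar v:=M\eta_\lambda$ with $M:=K/c_0$, so that $\bar v\ge v$ on $\overline{B_{R_0}}$. The heart of the proof is to show that $v\le\bar v$ on all of $\R$. Once this is known, the conclusion follows with $\bar C:=\max\{K,M\}\,\lambda^{-(1+2s)}$: for $|x|\ge1/\lambda$ one uses $v\le\bar v=M\lambda^{-(1+2s)}|x|^{-(1+2s)}$, and for $0<|x|<1/\lambda$ one uses $v\le K\le\bar C\,\lambda^{1+2s}\le\bar C|x|^{-(1+2s)}$.

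\textbf{Step 3: the comparison.} Set $h:=v-\bar v$; it is bounded and, since $v\in C^{0,\gamma}$ with $\gamma>2s$ and $\bar v$ is Lipschitz, it satisfies $|h(x)-h(y)|\le [h]\,|x-y|^{\gamma}$ (we may assume $\gamma\le1$, the case $\gamma>1$ forcing $v$ constant, hence $v\le0$ by the inequality, and the statement is then trivial). Suppose, for contradiction, that $D:=\sup_\R h>0$. Since $h\le0$ on $\overline{B_{R_0}}$ while $D>0$, either the supremum is attained at some $\bar x$ with $|\bar x|>R_0$, or there is a sequence $x_k$ with $|x_k|\to+\infty$ and $h(x_k)\to D$; in both cases the relevant points lie in $\R\setminus\overline{B_{R_0}}$, where, by the hypothesis on $v$ together with Step~1 and linearity, $-(-\Delta)^s h(x)\ge\alpha v(x)-\alpha\bar v(x)=\alpha h(x)$. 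In the attained case, $\alpha D\le-(-\Delta)^s h(\bar x)=\int_\R\big(h(y)-h(\bar x)\big)|\bar x-y|^{-1-2s}\,dy\le0$, a contradiction. Otherwise, noting $\bar v(x_k)\to0$, for any $r>0$ we split
$$
\alpha h(x_k)\le\int_{|y-x_k|<r}\frac{h(y)-h(x_k)}{|x_k-y|^{1+2s}}\,dy+\int_{|y-x_k|\ge r}\frac{h(y)-h(x_k)}{|x_k-y|^{1+2s}}\,dy ;
$$
the first term is at most $[h]\int_{|z|<r}|z|^{\gamma-1-2s}\,dz=C\,r^{\gamma-2s}$ (finite since $\gamma>2s$), and the second is at most $\big(D-h(x_k)\big)\int_{|z|\ge r}|z|^{-1-2s}\,dz=C\,r^{-2s}\big(D-h(x_k)\big)$. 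Letting $k\to+\infty$ and then $r\to0^+$ gives $\alpha D\le0$, again a contradiction. Hence $D\le0$, i.e.\ $v\le\bar v$, and the corollary follows.

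\textbf{Expected main difficulty.} The only genuinely delicate point is the last part of Step~3: when $\sup_\R(v-\bar v)$ is merely approached at infinity there is no global maximum at which to sign $-(-\Delta)^s$, so one must control the (nearly) singular part of the integral uniformly along $x_k$; this is precisely where the hypothesis $\gamma>2s$ enters, ensuring $\int_0^r\rho^{\gamma-1-2s}\,d\rho<+\infty$ with a bound that is $o(1)$ as $r\to0^+$. Choosing the scaling $\lambda^{2s}\kappa<\alpha$ in Step~1 so that $\eta_\lambda$ becomes a supersolution of $-(-\Delta)^s-\alpha$ outside a large ball is the other point to verify, but it is routine once Lemma~\ref{sop} is granted.
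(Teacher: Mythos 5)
Your proposal follows the paper's strategy closely: rescale the barrier $\eta$ from Lemma~\ref{sop} so that it becomes a supersolution of $-(-\Delta)^s-\alpha$ outside a large interval (the paper uses $\zeta(x)=\eta(ax)$ with $a=(\alpha/2\kappa)^{1/2s}$, you use $\eta_\lambda(x)=\eta(\lambda x)$ with $\lambda^{2s}\kappa<\alpha$), and then compare $v$ against a suitable constant multiple via a maximum-principle argument. The only meaningful departure is in the non-attained-supremum case of Step~3: where the paper introduces the vertically slid functions $v_b$, extracts a locally uniform limit by Ascoli--Arzel\`a, and passes to the limit in the singular integral by dominated convergence, you instead split the integral at scale $r$, bound the near part by $[h]\int_{|z|<r}|z|^{\gamma-1-2s}\,dz=O(r^{\gamma-2s})$ using $\gamma>2s$, bound the far part by $(D-h(x_k))\,O(r^{-2s})$, and let $k\to\infty$ then $r\to0^+$ to reach the contradiction $\alpha D\le0$ directly. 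Both variants use exactly the same H\"older hypothesis to tame the singularity, so the argument is correct and achieves the same bound.
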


\begin{proof}
If $v$ is identically 0, we have done. So, we suppose $\|v\|_{L^{\infty}(\R)}>0$.

Take~$\eta$ and~$\kappa$
as in Lemma~\ref{sop}.
Define
$$ a:=\left( \frac{\alpha}{2\kappa}\right)^{1/(2s)}$$
and~$\zeta(x):=\eta(ax)$.

Then,
\begin{eqnarray*}
\limsup_{x\rightarrow\pm\infty}
\frac{
-(-\Delta)^s \zeta(x)
}{\zeta(x)}=a^{2s}
\limsup_{x\rightarrow\pm\infty}
\frac{ 
-(-\Delta)^s \eta(ax)
}{\eta(ax)}
\le a^{2s}\kappa
=\frac\alpha2.
\end{eqnarray*}
As a consequence, there exists~$\beta'\ge\beta$ such that
\begin{equation}\label{9067}
{\mbox{$-(-\Delta)^s \zeta(x)\le\alpha\zeta(x)$ for
any~$x\in\R\setminus(-\beta',\beta')$.}}
\end{equation}
Now, we set
$$ \bar{C}:=\frac{4\| 
v\|_{L^\infty(\R)}}{\displaystyle\min_{[-a\beta',a\beta']}\eta}
=\frac{4\| v\|_{L^\infty(\R)}}{
\displaystyle\min_{[-\beta',\beta']}\zeta}.$$
We claim that
\begin{equation}\label{eq_control}
{\mbox{$v(x)\le \bar{C}\zeta(x)$ for any $x\in\R$.}}
\end{equation}
In order to prove the above inequality,
we take~$b$ in $[0, +\infty)$ and we define~$v_b(x):= \bar{C}\zeta(x)+b-v(x)$.
When~$b>\| v\|_{L^\infty(\R)}$, we have that~$v_b(x)>0$
for any~$x\in\R$. Now,
if $v_b(x)>0$ $\forall x\in \R$ and $\forall b \in [0, +\infty]$, we take $b:=0$ and we get~\eqref{eq_control}. Then, we may take~$b_o$ the first~$b$ for which~$v_b$
touches~$0$ from above: we have that~$v_{b_o}(x)\ge 0$ 
and that there exists a sequence~$x_k\in\R$ such that~$v_{b_o}(x_k)
\le 2^{-k}$, for~$k\in\N$.
We claim that
\begin{equation}\label{b0}
b_o = 0.
\end{equation}
Indeed, we have, if~$k$ is sufficiently large, 
\begin{equation*}
\| v\|_{L^\infty(\R)}\ge
2^{-k} \ge v_{b_o}(x_k)\geq \bar{C}\zeta(x_k)-v(x_k)\ge
\bar{C}\zeta(x_k) -\| v\|_{L^\infty(\R)}
\end{equation*}
and so
$$ \zeta(x_k) \le 
\frac{ 2\| v\|_{L^\infty(\R)} }{\bar{C}}
=\frac{{
\displaystyle\min_{[-\beta',\beta']}\zeta}}{2}.$$
Therefore,~$|x_k|>\beta'$.

Hence, recalling~\eqref{9067},
\begin{equation}\label{189}
\begin{split}
&\int_\R \frac{v_{b_o}(y)-v_{b_o}(x_k)}{|x_k-y|^{1+2s}}\,dy
= -(-\Delta)^s v_{b_o}(x_k) \\ &\qquad=
-\bar{C} (-\Delta)^s\zeta(x_k)
+(-\Delta)^s v(x_k)
\, \le \, \alpha (\bar{C}\zeta(x_k) -v(x_k))
\\ &\qquad=\alpha v_{b_o}(x_k)-\alpha b_o
\,\le \,
2^{-k}\alpha -\alpha b_o.
\end{split}\end{equation}
Now, we define~$v_k(x):=v_{b_o}(x+x_k)$.
Notice that~$v_k(x)\ge 0$ for any~$x\in\R^n$ and~$v_k(0)\le 2^{-k}$.
Also, by the Theorem of Ascoli, up to subsequence, we may
suppose that~$v_k$ converges to some~$v_\infty$ 
locally uniformly as $k\rightarrow
+\infty$.
It follows that
\begin{equation}\label{eq_vpos}
v_\infty(x)\ge0 \, =\, v_\infty(0) \ \ \ \text{for any} \ x\in\R.
\end{equation}
Moreover, by the assumption on the function $v$, we obtain that $v_k$ satisfies
$$
\dys \frac{|v_k(t)-v_k(0)|}{|t|^{1+2s}}\, \leq \, C\big(|t|^{\gamma-1-2s}\chi_{(-1,1)}(t) + |t|^{-1-2s}\chi_{\R\setminus(-1,1)}(t)\big),
$$
for a suitable constant $C>0$. Thus, the Dominated Convergence Theorem yields
\begin{equation}\label{eq_vlimit}
\lim_{k\rightarrow+\infty}
\int_\R \frac{v_{k}(t)-v_{k}(0)}{|t|^{1+2s}}\,dt \, = \, \int_\R \frac{v_{\infty}(t)}{|t|^{1+2s}}\,dt.
\end{equation}
Finally, combining the above equation with ~\eqref{189} and~\eqref{eq_vpos}, we get 
\begin{eqnarray*}
-\alpha b_o &\ge& \lim_{k\rightarrow+\infty}
\int_\R \frac{v_{b_o}(y)-v_{b_o}(x_k)}{|x_k-y|^{1+2s}}\,dy
\\ &=&
\lim_{k\rightarrow+\infty}
\int_\R \frac{v_{k}(t)-v_{k}(0)}{|t|^{1+2s}}\,dt
\\ &=& 
\int_\R \frac{v_{\infty}(t)}{|t|^{1+2s}}\,dt \ \geq \ 0.
\end{eqnarray*}

This completes the proof of~\eqref{b0}.
\vspace{1mm}

Now, from~\eqref{b0}, we conclude that, for any~$x\in\R$,
$$ 0\le v_{b_o}(x)=
\bar{C}\zeta(x)+b_o-v(x)=
\bar{C}\zeta(x)-v(x)$$
and so~$v(x)\le \bar{C}\zeta(x)$.\qed
\end{proof}

\vspace{2mm}

We finish this section by using the barriers constructed in Lemma~\ref{tau} and Lemma~\ref{sop} in order to obtain a precise control on the behavior at infinity of the monotone solutions of equation~\eqref{equazione}. 

\begin{proposition}\label{pro_control}
Let $n=1$ and let $W\in C^2(\R)$ be a double-well potential with wells at $\{-1,+1\}$ such that $W''(\pm1)>0$. Suppose that $u$ is a strictly increasing function which satisfies
\begin{equation}\label{eqqq}
\begin{cases}
-(-\Delta)^su(x) = W'(u(x)) \ \text{for any} \  x\in \R, \\
\dys \lim_{x\to\pm\infty}u(x) = \pm 1.
\end{cases}
\end{equation}
Then there exists a constant $C\geq 1$ such that
\begin{equation}\label{EST5}
|{u} (x)-\,{\rm sign}\,(x)|\le C\,|x|^{-2s},\\
\end{equation}
\begin{equation}\label{stima_der}
\big|{u}' (x)\big|\le C\,|x|^{-(1+2s)}
\end{equation}
for any large $x\in\R$.
\end{proposition}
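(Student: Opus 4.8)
\emph{Overview and preliminaries.} The plan is to prove the derivative bound \eqref{stima_der} first, by applying the linear comparison result of Corollary~\ref{cor_super} (whose proof rests on the barrier of Lemma~\ref{sop}) to $v:=u'$, and then to deduce \eqref{EST5} either by integrating \eqref{stima_der} or, self-containedly, by comparing $u$ near $-\infty$ with a suitably translated copy of the barrier $w_R$ of Lemma~\ref{tau}; the barrier route has the bonus of also yielding the matching lower bound $1\mp u(x)\ge c|x|^{-2s}$. As for preliminaries: since $u$ is increasing with $u(\pm\infty)=\pm1$ we have $-1\le u\le1$, so $u\in L^\infty(\R)$ and, by Lemma~\ref{lem_dif} (here $W\in C^2$), $u\in C^2(\R)$ with bounded derivatives; in particular $u'\in L^\infty(\R)$ and $u'\ge0$, and (differentiating \eqref{eqqq} at a hypothetical zero of $u'$, exactly as in Remark~\ref{rem_strict}) $u'>0$ on $\R$, although only $u'\ge0$ is needed below. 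Using $W''(\pm1)>0$ and the continuity of $W''$ we fix $\tau>0$ and $\delta_0>0$ with $W'(t)\ge\tau(1+t)$ on $[-1,-1+\delta_0]$ and $-W'(t)\ge\tau(1-t)$ on $[1-\delta_0,1]$, and a radius $T_0>0$ with $W''(u(x))\ge\tau$ for $|x|\ge T_0$.

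\emph{Step 1: the bound on $u'$.} Differentiating the equation in \eqref{eqqq} (legitimate since $u\in C^2$) gives $-(-\Delta)^s u'=W''(u)\,u'$ on $\R$, hence $-(-\Delta)^s u'(x)\ge\tau\,u'(x)$ for every $x\in\R\setminus(-T_0,T_0)$. Since $u'\ge0$ is bounded and, by Lemma~\ref{lem_dif}, Hölder continuous with sufficient regularity, Corollary~\ref{cor_super} applies with $\alpha=\tau$, $\beta=T_0$ and $v=u'$, yielding $u'(x)\le\bar C|x|^{-(1+2s)}$ for all $x$, which is \eqref{stima_der}. (For $s\ge1/2$ the plain Hölder exponent of $u'$ does not exceed $2s$; one then uses $u'\in C^{1,\alpha}$ from Lemma~\ref{lem_dif}, the exponent entering the proof of Corollary~\ref{cor_super} only to make the singular kernel integrable near the touching point, where the extremal profile vanishes to first order.)

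\emph{Step 2: the bound on $u-\mathrm{sign}$.} The quick route is to note that $u$ is increasing with $\int_\R u'(t)\,dt=2$, so $1-u(x)=\int_x^{+\infty}u'(t)\,dt\le\frac{\bar C}{2s}\,x^{-2s}$ for $x$ large and, symmetrically, $1+u(x)\le\frac{\bar C}{2s}\,|x|^{-2s}$ for $-x$ large, which is \eqref{EST5}. The barrier route (which also gives the lower bound) proceeds as follows for $x\to-\infty$, the case $x\to+\infty$ following by applying it to $\tilde u(x):=-u(-x)$, a solution of \eqref{eqqq} for the reflected potential $W(-\cdot)$. Put $x_0:=\sup\{x:u(x)\le-1+\delta_0\}<+\infty$; on $\Omega:=(-\infty,x_0)$ the function $u$ is a supersolution of $-(-\Delta)^s w=\tau(1+w)$, since there $-(-\Delta)^s u=W'(u)\ge\tau(1+u)$. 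For $R$ large let $w_R$ be the barrier of Lemma~\ref{tau} for this $\tau$, translated so that its outer plateau begins at $x_0$, i.e.\ $\omega_R(x):=w_R(x-x_0+R)$; then $\omega_R\equiv1$ on $[x_0,+\infty)$ and, by \eqref{al1}, $-(-\Delta)^s\omega_R\le\tau(1+\omega_R)$ on $\R$. Hence $g:=u-\omega_R$ is bounded, continuous, $\le0$ on $[x_0,+\infty)$, tends to $u(-\infty)-1=-2$ at $-\infty$, and obeys $(-\Delta)^s g+\tau g\le0$ on $\Omega$; a positive value of $g$ on $\Omega$ would be attained at an interior maximum, where $(-\Delta)^s g+\tau g>0$ — a contradiction — so $g\le0$, i.e.\ $u\le\omega_R$ on $\R$. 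For $x\in(x_0-R,x_0)$, \eqref{al2} then gives $1+u(x)\le1+\omega_R(x)\le C\,(1+x_0-x)^{-2s}$ with $C$ independent of $R$; fixing $x$ and letting $R\to+\infty$ we get $1+u(x)\le C(1+x_0-x)^{-2s}$ on $(-\infty,x_0)$, which is \eqref{EST5} as $x\to-\infty$ after absorbing $x_0$ into the constant.

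\emph{Main obstacle.} The delicate part is the barrier argument of Step~2: one must calibrate $\tau$ to the convexity of $W$ at the wells, place the \emph{even} barrier $w_R$ so that it lies above $u$ on the whole line despite $w_R$ and $u$ being ``opposite'' transition profiles — which is exactly what forces the $R$-dependent shift of the center to $x_0-R$ — and then run the comparison on the unbounded domain $\Omega$, which is legitimate only because $g$ has the correct sign at both ends of $\Omega$ (at $+\infty$ from $\omega_R\equiv1\ge u$, at $-\infty$ from $u\to-1<1\equiv\omega_R$). In Step~1 the only point requiring care is to match the bootstrapped regularity of $u'$ supplied by Lemma~\ref{lem_dif} to the hypotheses of Corollary~\ref{cor_super}, which for $s\ge1/2$ hinges on the first-order vanishing of the extremal profile at the contact point.
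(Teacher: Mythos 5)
Your proof is correct, but it takes a somewhat different route from the paper's for estimate \eqref{EST5}. Your Step~1 (differentiate, use $W''(\pm1)>0$ to get $-(-\Delta)^s u'\ge\tau u'$ away from the origin, apply Corollary~\ref{cor_super} to $u'$) is exactly the paper's argument for \eqref{stima_der}. The difference is in the other half: the paper proves \eqref{EST5} \emph{independently}, by a classical sliding argument — translating the even barrier $w$ of Lemma~\ref{tau} until it first touches $u$, showing that at the touching point $\bar x$ one has $u(\bar x)>-1+c$ and $|\bar x-\bar k|\ge R-C'$, and then reading off the decay from \eqref{al2} via the monotonicity of $u$. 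You instead reorder the two steps and observe that, once the derivative bound is in hand, \eqref{EST5} follows by the elementary integration $1\mp u(x)=\int u'$, which is shorter and entirely avoids the barrier for this part. Your alternative ``static'' barrier route is closer to the paper in spirit, but whereas the paper slides the barrier and locates the contact point dynamically, you fix the translation a~priori using the crossing point $x_0$ of the level $-1+\delta_0$ and run a one-shot maximum-principle comparison on $(-\infty,x_0)$ before sending $R\to\infty$; this is cleaner in one dimension (where the level set is a point) but less robust than sliding, which is what makes the paper's method transferable. You are also more explicit than the paper about the Hölder hypothesis of Corollary~\ref{cor_super}: as stated it requires $v\in C^{0,\gamma}$ with $\gamma>2s$, which cannot hold literally for $u'$ when $s\ge1/2$; you correctly observe that the exponent enters only to make the kernel integrable at the contact point, where the nonnegative profile has first-order vanishing, so the $C^{1,\alpha}$ regularity supplied by Lemma~\ref{lem_dif} suffices — a technical point the paper passes over silently.
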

\begin{proof}
First, we note that the potential $W$ satisfies
\begin{equation}\label{grow3}
{\mbox{$W'(t)\ge W'(r)+c(t-r)$ when
$r\le t$, $r,t\in [-1,\,-1+c]\cup[+1-c,\,+1]$,
}}
\end{equation}
for some~$c>0$.

Now, we choose $\tau=c$ in Lemma~\ref{tau} and, for any~$R\ge C$, we consider the barrier $w$ constructed there.

{F}rom~\eqref{sopra}, we know that there exists~$K\in\R$ such that, 
if~$k\in (-\infty,K]$, then~$w(x-k)> u(x)$ for any~$x\in\R$.
We take~$\bar{k}$ as large as possible with this property, i.e.,
\begin{equation}\label{7889d12}
{\mbox{$w(x-k)> u(x)$ for any~$k<\bar{k}$ and
any $x\in\R$}}\end{equation} 
and
there exists an infinitesimal sequence~$\eta_j\in[0,1)$ and 
points~$x_j\in\R$ for
which \begin{equation}\label{dhuju2112}
w(x_j-(\bar{k}+\eta_j))\le 
u(x_j).\end{equation}
{F}rom the asymptotic behavior at $\infty$ and the strict monotonicity of~$u$,
we know that~$|u(x)|<1$ for any~$x\in\R$.
Hence, by~\eqref{dhuju2112},
$$ w(x_j-(\bar{k}+\eta_j))< 1.$$
This and~\eqref{sopra2} gives that
\begin{equation}\label{909878124556}
\big| x_j-(\bar{k}+\eta_j) \big|\le R,\end{equation}
therefore
$$ |x_j|\le R+|\bar{k}|+1.$$

Thus, up to subsequence, we may suppose that
$$ \lim_{j\rightarrow+\infty} x_j=\bar{x},$$
for some~$\bar{x}\in\R$. Moreover,~\eqref{909878124556}
implies that
\begin{equation}\label{909878124556-2}
\bar{x}-\bar{k}\in [-R,R],\end{equation}
while~\eqref{dhuju2112} 
and~\eqref{7889d12}
give that~$w(\bar{x}-\bar{k})=u(\bar{x})$.
\vspace{1mm}

Thus, we set~$v(x):=w(x-\bar{k})-u (x)$ and we see that~$v(x)\ge0$
for any~$x\in\R$ and~$v(\bar{x})=0$.
\vspace{1mm}

Note that if~$x-\bar{k}\in [-R,R]$ and $u(x)\in [-1,-1+c]$, then
\begin{eqnarray}\label{909878124556-3} 
\int_\R \frac{v(y)-v(x)}{|x-y|^{1+2s}}\,dy  & = &
\int_\R \frac{w(y-\bar{k})-w(x-\bar{k})}{|x-y|^{1+2s}}\,dy
+(-\Delta)^s u(x) \nonumber \\
&\le &  \tau (1+w(x-\bar{k}))
-W'(u(x)) \nonumber\\
& \le &  \tau(1+w(x-\bar{k}))-c(u(x)+1) \nonumber \\
 &= &  c v(x),
\end{eqnarray}
thanks to~~\eqref{al1},~\eqref{eqqq} and~\eqref{grow3}.
\vspace{1mm}

We claim that
\begin{equation}\label{87dfhjh3nd72h2}
u(\bar{x})>-1+c.
\end{equation}
The proof of~\eqref{87dfhjh3nd72h2}
is by contradiction: if~$u(\bar{x})\in [-1,\,-1+c]$
we deduce from~\eqref{909878124556-2}
and~\eqref{909878124556-3}
that
$$ 
\int_\R \frac{v(y)}{|\bar{x}-y|^{1+2s}}\,dy
\,=\,
\int_\R \frac{v(y)-v(\bar{x})}{|\bar{x}-y|^{1+2s}}\,dy
\, \le \,cv(\bar{x})\,=\,0.$$
Since the first integrand is non-negative, we would have that~$v$
vanishes identically, i.e.~$w(x-\bar{k})=u (x)$ for any~$x\in\R^n$.
But then
$$ +1=\lim_{x\rightarrow-\infty}w(x-\bar{k})=
\lim_{x\rightarrow-\infty}u (x)=-1$$
and this contradiction proves~\eqref{87dfhjh3nd72h2}.
\vspace{1mm}

{F}rom~\eqref{al2}, \eqref{909878124556-2}
and~\eqref{87dfhjh3nd72h2}, we 
obtain
$$ C \big( R+1-|\bar{x}-\bar{k}|\big)^{-2s}\,\ge\, 
1+w(\bar{x}-\bar{k})\, =\, 
1+u (\bar{x})\, >\, c,$$
hence
\begin{equation}\label{7dfuij3jj33ookl}
|\bar{x}-\bar{k}|\ge R-C'
\end{equation}
for a suitable~$C'>0$.

We now observe that
\begin{equation}\label{0044}
\bar{x}-\bar{k}\ge0.\end{equation}
Indeed, if, by contradiction, $\bar{x}-\bar{k}<0$,
we define~$\hat k:=2\bar{x}-\bar{k}<\bar{k}$ and we use~\eqref{7889d12}
to obtain
$$ w(\bar{k}-\bar{x})=w(\bar{x}-\hat k)> u(\bar{x})=w(\bar{x}-\bar{k}).$$
Since~$w$ is even, 
this is a contradiction, and~\eqref{0044} is proved.
\vspace{1mm}

We deduce from~\eqref{909878124556-2},
\eqref{7dfuij3jj33ookl} and~\eqref{0044} that
\begin{equation}\label{6709}
\bar{x}-\bar{k}\in[R-C',\,R].
\end{equation}
We fix~$\kappa\in\R$ such that~$u(-\kappa)=-1+c$.
We remark that~$-\kappa\le \bar{x}$ and so
\begin{equation}\label{2-87dfhjh3nd72h2}
u(x-\kappa) \le u (x+\bar{x}),\end{equation}
for any~$x\in\R$, thanks to~\eqref{87dfhjh3nd72h2}
and the monotonicity of~$u$.
\vspace{1mm}

Now, we
take any
\begin{equation}\label{YY} y\in \left[ \frac{R}{2},\,R\right].
\end{equation}
Then, by~\eqref{6709}, we have that
$$ \bar{x}-y-\bar{k}\in \left[-\frac{R}{2}
,\,\frac{R}{2}\right],$$
and so, by~\eqref{al2},
\begin{eqnarray*}
 1+w(\bar{x}-y-\bar{k}) 
 & \le &  C \big( R+1-|\bar{x}-y-\bar{k}|\big)^{-2s}
\\ & \le &  C (R/2)^{-2s}
\, \le \, 4C\,y^{-2s}.
\end{eqnarray*}
By the above inequality,~\eqref{7889d12}
and~\eqref{2-87dfhjh3nd72h2}
we obtain that
$$ u(-\kappa-y)\, \le\,  u(\bar{x}-y)\, \le\, 
w(\bar{x}-y-\bar{k})\, \le\,  -1+ 4C \,y^{-2s}$$
for any~$y$ as in~\eqref{YY}.
\vspace{1mm}

Since~$\kappa$ is a constant and~$R$ may be taken arbitrarily large, this 
says that, when~$x$ is negative and very large,
$$
u(x)\le -1+C\,|x|^{-2s},
$$
for a suitably renamed $C>0$.
Analogously, one can prove that
$$
u(x)\ge +1-C\,|x|^{-2s}
$$
when~$x$ is positive and very large, and these estimates
prove the formula in~\eqref{EST5}.
\vspace{2mm}

Finally, in order to prove the estimate in~\eqref{stima_der}, we observe that the function $u$ belongs to $C^2(\R)$ (see Lemma~\ref{lem_dif}) and that its derivative 
$u'$ satisfies the following equation
$$
-(-\Delta)^s u'(x) = W''(u(x)) u' \ \ \text{for any} \ x\in\R.
$$
Then, since  $\lim_{x\to\pm\infty} u=\pm 1$  and  the $C^2$ potential $W$ attains its minimum on $\pm 1$, there exist $\alpha, \beta >0$ such that $(u)'$ satisfies
$$
-(-\Delta)^s u'(x) \geq \alpha\,u'(x) \ \ \ \text{for any} \ x\in \R\setminus(-\beta, \beta).
$$
Hence, if we choose $v=u'$, Corollary~\ref{cor_super} yields the desired estimate in~\eqref{stima_der}.\qed
\end{proof}

\vspace{1mm}
\begin{remark}
{\rm We note that the statement in~Proposition~\ref{pro_control} is also valid for solution in $ [0,\infty)$, by replacing the limit condition in~\eqref{eqqq} with the following assumptions
$$
\dys \lim_{x\to+\infty} u(x)=+1 \ \ \ \text{and}\  \ \ u(x)=-1 \ \ \forall x \in (-\infty, 0].
$$
In such a case, the estimates~\eqref{EST5} and~\eqref{stima_der} are meant for $x$ positive and large enough.
}
\end{remark}

\vspace{2mm}

\subsection{A compactness remark}\label{A.B}

In the following lemma, we give full details of a compactness result of classical flavor.

\begin{lemma}\label{C.B}
Let $n\ge1$, $\Omega\subset\R^n$ be a Lipschitz bounded open set 
and $\TT$ be a bounded subset of 
$L^2(\Omega)$.
Suppose that
$$ \sup_{f\in \TT} 
\int_{\Omega}\int_{\Omega}
\frac{|f(x)-f(y)|^2}{|x-y|^{n+2s}}\,dx\,dy
\,<\,+\infty.$$
Then $\TT$ is precompact in $L^2(\Omega)$.
\end{lemma}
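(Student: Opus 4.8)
The plan is to deduce precompactness from the Riesz--Fr\'echet--Kolmogorov criterion in $L^2$, after reducing the problem to the whole space. The two hypotheses amount to a uniform bound, over $f\in\TT$, on the fractional Sobolev norm $\|f\|_{L^2(\Omega)}^2+\int_\Omega\!\int_\Omega\frac{|f(x)-f(y)|^2}{|x-y|^{n+2s}}\,dx\,dy$; since $\Omega$ is bounded and Lipschitz, there is a bounded linear extension operator $\mathcal{E}$ from this fractional Sobolev space into $H^s(\R^n)$ whose range consists of functions supported in one fixed bounded set $\Omega'\supset\Omega$. Thus $\{\mathcal{E} f:f\in\TT\}$ is a family in $L^2(\R^n)$ that is bounded, uniformly supported in $\Omega'$, and uniformly bounded in the Gagliardo seminorm $[\cdot]_{H^s(\R^n)}$. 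It then suffices to show this family is precompact in $L^2(\R^n)$ and to restrict back to $\Omega$.

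By the Riesz--Fr\'echet--Kolmogorov theorem, for a bounded, uniformly compactly supported family in $L^2(\R^n)$ precompactness is equivalent to equicontinuity of translations, namely $\sup_{f\in\TT}\|\tau_h(\mathcal{E} f)-\mathcal{E} f\|_{L^2(\R^n)}\to0$ as $|h|\to0$, with $\tau_h u(\cdot):=u(\cdot+h)$. To obtain this, fix $g:=\mathcal{E} f$ and set $\psi_g(z):=\|\tau_z g-g\|^2_{L^2(\R^n)}$. The substitution $y=x+z$ in the Gagliardo seminorm gives the identity
$$
[g]^2_{H^s(\R^n)}=\int_{\R^n}\frac{\psi_g(z)}{|z|^{n+2s}}\,dz,
$$
hence $\int_{B_\rho}\psi_g\le\rho^{\,n+2s}\,[g]^2_{H^s(\R^n)}$ for every $\rho>0$. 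On the other hand, translation invariance of the $L^2$ norm yields the subadditivity $\sqrt{\psi_g(z_1+z_2)}\le\sqrt{\psi_g(z_1)}+\sqrt{\psi_g(z_2)}$. Writing $h=z+(h-z)$, averaging this inequality over $z$ in the ball of radius $|h|/2$ centred at $h/2$ (which is mapped onto itself by $z\mapsto h-z$), and then applying Cauchy--Schwarz together with the previous bound with $\rho=|h|$, one arrives at
$$
\|\tau_h g-g\|^2_{L^2(\R^n)}=\psi_g(h)\le C_n\,|h|^{2s}\,[g]^2_{H^s(\R^n)} .
$$
Since $[g]_{H^s(\R^n)}$ is bounded uniformly over $f\in\TT$, the required equicontinuity follows, and the criterion gives the claim.

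The delicate point is the passage to $\R^n$: the Gagliardo energy over $\Omega\times\Omega$ measures oscillations only inside $\Omega$, so translations cannot be controlled near $\partial\Omega$ without extra input, and this is exactly where the Lipschitz hypothesis enters, through the extension operator. An alternative that avoids extension is to cover $\Omega$ by finitely many cubes $Q_i$ of side $\delta$, approximate each $f\in\TT$ by the piecewise-constant function $\sum_i f_{Q_i\cap\Omega}\,\chi_{Q_i\cap\Omega}$ --- which ranges in a \emph{fixed finite-dimensional} subspace, hence in a precompact set once $\TT$ is bounded in $L^2(\Omega)$ --- and to estimate the error by the scaled fractional Poincar\'e inequality $\|f-f_{Q_i\cap\Omega}\|^2_{L^2(Q_i\cap\Omega)}\le C\,\delta^{2s}\int_{Q_i\cap\Omega}\!\int_{Q_i\cap\Omega}\frac{|f(x)-f(y)|^2}{|x-y|^{n+2s}}\,dx\,dy$, summed over the pieces $(Q_i\cap\Omega)\times(Q_i\cap\Omega)$, which are pairwise disjoint inside $\Omega\times\Omega$; here the Lipschitz regularity of $\Omega$ provides a Poincar\'e constant uniform in $i$. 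In either approach one concludes that $\TT$ lies within arbitrarily small $L^2(\Omega)$-distance of a precompact set, hence is totally bounded, hence precompact.
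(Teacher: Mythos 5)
Your main argument is correct, and it is a genuinely different route from the paper's. Both proofs begin identically: because $\Omega$ is bounded and Lipschitz, extend each $f\in\TT$ to a compactly supported element of $H^s(\R^n)$ via a bounded extension operator, so that the uniform $L^2$ and Gagliardo bounds survive on $\R^n$. From there the two proofs diverge. You establish the translation estimate $\|\tau_h g-g\|_{L^2(\R^n)}^2\le C_n|h|^{2s}[g]^2_{H^s(\R^n)}$ by the elegant averaging trick (subadditivity of $h\mapsto\|\tau_h g-g\|_{L^2}$, averaged over $z\in B_{|h|/2}(h/2)$ and combined with $\int_{B_\rho}\|\tau_z g-g\|^2_{L^2}\,dz\le\rho^{n+2s}[g]^2_{H^s}$ via Cauchy--Schwarz), and then invoke the Riesz--Fr\'echet--Kolmogorov criterion in its ``equicontinuity of translations'' form. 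The paper instead carries out the total-boundedness criterion directly: it covers the enlarged cube by disjoint subcubes $Q_j$ of side $\rho$, projects $f$ onto its cube averages $P(f)$, observes that $R(\TT)\subset\R^N$ is bounded (hence totally bounded in finite dimension), and controls the approximation error $\|f-P(f)\|_{L^2}$ by a scaled fractional Poincar\'e estimate $\|f-P(f)\|_{L^2}^2\le n^{(n/2)+1}\rho^{2s}\int_\Omega\int_\Omega\frac{|f(x)-f(y)|^2}{|x-y|^{n+2s}}\,dx\,dy$ obtained from H\"older's inequality on each cube. The key estimates are different --- translation regularity versus cube-wise Poincar\'e --- but both deliver the same $\rho^{2s}$ (resp.\ $|h|^{2s}$) smallness.

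Your ``alternative'' in the last paragraph is in spirit the paper's argument minus the extension step, but the claim that the fractional Poincar\'e constant is uniform over the intersections $Q_i\cap\Omega$ is exactly the point that needs care: those intersections can be rather irregular when $Q_i$ straddles $\partial\Omega$. The Lipschitz hypothesis can indeed be used to make this uniform (e.g.\ via the cone condition and a scaling argument), but you should not treat it as self-evident; the paper sidesteps the issue entirely by extending first so that every $Q_j$ lies inside the domain and is a full cube, for which the Poincar\'e constant scales cleanly as $\rho^{2s}$.
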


\begin{proof} 
The proof 
follows the one of
the classical Riesz-Frechet-Kolmogorov Theorem, but we need to operate some modifications due to the non-locality of 
the fractional norm.

We show that $\TT$ is 
totally 
bounded in 
$L^2(\Omega)$,
i.e., for any $\eps \in (0,1)$ there exist $\beta_1,\dots,\beta_M \in L^2(\Omega)$
such that for any $f\in \TT$ there exists $j\in \{ 1,\dots, M\}$ such
that
\begin{equation}\label{asd}
\| f-\beta_j\|_{L^2(\Omega)} \le\eps.
\end{equation}

First, we remark that we can extend any
function~$f\in\TT$ as a function $\tilde{f}$ in $H^{s}(\R^n)$ (see, for instance,~\cite[Section 5]{DPV11}). Therefore, we can suppose that $\Omega$ is contained in a large cube~$\tilde{\Omega}$, with 
$\|\tilde{f}\|_{H^s(\tilde{\Omega})} \leq C_0 \|{f}\|_{H^s({\Omega})}$. For the sake of simplicity, we drop the tilda's in $f$ and $\Omega$
and we let 
\begin{eqnarray*}&& C:= 1+\sup_{f\in \TT}\|f\|_{L^2(\Omega)}+\sup_{f\in\TT}
\int_{\Omega}\int_{\Omega}
\frac{|f(x)-f(y)|^2}{|x-y|^{n+2s}}\,dx\,dy,\\
&&\rho\leq\rho_{\eps}:= \left(
\frac{\eps}{4\sqrt{C\,n^{(n/2)+1}}}
\right)^{1/s}
\ {\mbox{
and
}}\ \ \
\eta=\eta_{\eps}:=\frac{\eps\,\rho^{n/2}}2,\end{eqnarray*}
and we take a collection of disjoints cubes $Q_1,\dots,Q_N$
of side 
$\rho$
such that
$$
\Omega = \bigcup_{j=1}^N Q_j.
$$
For any $x\in \Omega$ 
we define
\begin{equation}\label{def_jx}
j(x) \ \text{as the unique integer in}  \ \{ 1,\dots,N\} \ \text{for which}
 \ x\in Q_{j(x)}.
\end{equation}
Also, for any $f\in\TT$, let
$$ P(f) (x):=
\frac{1}{|Q_{j(x)}|} 
\int_{Q_{j(x)}}
f(y)\,dy.$$
Notice that
$$
P(f+g)=P(f)+P(g) \ \text{for any} \ f, g \in \TT
$$
and that $P(f)$ is constant, say equal to $q_j(f)$, in any $Q_j$,
for $j\in\{ 1,\dots,N\}$. Therefore, we can define
$$ R(f) :=\rho^{n/2} \big( q_1(f),\dots,q_N(f)\big)\in \R^N.$$
We observe that $R({f+g})=R(f)+R(g)$. Moreover, 
\begin{eqnarray}\label{509}
\|P(f)\|^2_{L^2(\Omega)}  &  = & \sum_{j=1}^N\int_{Q_j}|P(f)|^2\, dx \nonumber \\
\nonumber \\
& \leq &  \rho^n\sum_{j=1}^N |q_j(f)|^2 \ = \ |R(f)|^2 \ \leq \ \frac{|R(f)|^2}{\rho^n}.
\end{eqnarray}
and, by H\"older
inequality,
\begin{eqnarray*}
|R(f)|^2  & = &  \sum_{j=1}^N\rho^n|q_j(f)|^2
\, = \, \frac{1}{\rho^n}
\sum_{j=1}^N \left|
\int_{Q_{j}} f(y)\,dy
\right|^2  \\
& \leq & 
\sum_{j=1}^N 
\int_{Q_{j}} |f(y)|^2\,dy
\, = \,
\int_{\Omega} |f(y)|^2 \, =\, \|f\|_{L^2(\Omega)}^2.
\end{eqnarray*}
In particular,
$$
\sup_{f\in\TT}|R(f)|^2
\le C,
$$
that is, the set $R(\TT)$ is bounded in $\R^N$
and so, since it is finite dimensional, it is totally bounded.
Therefore, there exist $b_1,\dots,b_M\in \R^N$ such that
\begin{equation}\label{eq_22bis}
R(\TT)\subseteq \bigcup_{i=1}^M B_\eta (b_i).
\end{equation}
For any $i\in \{1,\dots, M\}$, we write the coordinates of $b_i$ as $b_i=(b_{i,1},\dots, b_{i,N})\in \R^N$.
For any $x\in \Omega$, we set
$$
\beta_i(x):= \rho^{-n/2}\,b_{i,j(x)},
$$
where $j(x)$ is as in~\eqref{def_jx}.

Notice that $\beta_i$ is constant on $Q_j$, i.e. if $x\in Q_j$ then 
\begin{equation}\label{eq_2t2}
P(\beta_i)(x)=\rho^{-\frac{n}{2}}b_{i,j}=\beta_{i}(x)
\end{equation}
and so $q_j(\beta_i)=\rho^{-\frac{n}{2}}b_{i,j}$; thus\
\begin{equation}\label{eq_25bis}
 R(\beta_i)=b_i.
 \end{equation}

Furthermore, for any $f\in \TT$, by H\"older inequality,
\begin{eqnarray*}
\| f-P(f)\|_{L^2(\Omega)}^2  & = & \sum_{j=1}^N
\int_{Q_j} |f(x)-P(f)(x)|^2\,dx
\\
\\ & = &   \sum_{j=1}^N\int_{Q_j} \left| f(x)-\frac{1}{|Q_j|} \int_{Q_j}
f(y)\,dy\right|^2\,dx
\\
\\ & = &  \sum_{j=1}^N
\int_{Q_j} \frac{1}{|Q_j|^2}\left|\int_{Q_j} f(x)-
f(y)\,dy\right|^2\,dx
\\
& \le &  \frac{1}{\rho^n}\sum_{j=1}^N
\int_{Q_j} \left[\int_{Q_j} \big|f(x)-
f(y)\big|^2\,dy\right]\,dx
\\
\\ & \le  &  n^{(n/2)+1}\rho^{2s}\sum_{j=1}^N
\int_{Q_j} \left[\int_{Q_j} \frac{|f(x)-
f(y)|^2}{|x-y|^{n+2s}}\,dy\right]\,dx
\\
\\ & \le &  n^{(n/2)+1}\rho^{2s}\sum_{j=1}^N
\int_{Q_j} \left[\int_{\Omega} \frac{|f(x)-
f(y)|^2}{|x-y|^{n+2s}}\,dy\right]\,dx
\\
\\ &= &  n^{(n/2)+1}\rho^{2s}
\int_{\Omega} \left[\int_{\Omega} \frac{|f(x)-
f(y)|^2}{|x-y|^{n+2s}}\,dy\right]\,dx
\\
\\ & \le &  C\,n^{(n/2)+1}\,\rho^{2s} \, = \, \frac{\eps^2}{16}. 
\end{eqnarray*}
Consequently, for any $j\in\{1, ..., M\}$, recalling~\eqref{509} and~\eqref{eq_2t2}
\begin{eqnarray}\label{905}
\|f-\beta_j\|_{L^2(\Omega)} 
& \le & 
\|f-P(f)\|_{L^2(\Omega)}+\|P(\beta_j)-\beta_j\|_{L^2(\Omega)}
+\|P(f-\beta_j)\|_{L^2(\Omega)} \nonumber \\
& \leq &   \frac{\eps}{2}+\frac{|R(f)-R(\beta_j)|}{\rho^{n/2}}.
\end{eqnarray}
\vspace{1mm}

Now, given any~$f\in\TT$, we recall~\eqref{eq_22bis} and~\eqref{eq_25bis} and we take~$j\in\{1,\dots,M\}$ such that~$R(f)\in
B_\eta(b_j)$.
Then,~\eqref{eq_2t2} and~\eqref{905} give that
\begin{eqnarray*}
\| f-\beta_j\|_{L^2(\Omega)} \ \le
\ \frac{\eps}{2}+\frac{|R(f)-b_j|}{\rho^{n/2}}\ \le \
\frac\eps2+\frac{\eta}{\rho^{n/2}}\ = \ \eps.
\end{eqnarray*}
This proves \eqref{asd}, as desired.\qed
\end{proof}

\vspace{2mm}

\subsection{Integral computations}

\vspace{2mm}

Lemma~\ref{lem_inout} deals with the kernels of the Gagliardo norm in the case of $n$-dimensional balls $B_R$. We provide a lower bound, with respect to the radius $R$ of the contribution coming from far of the energy.
\vspace{1mm}

Lemma \ref{lip_lem-0} and Lemma \ref{lip_lem-1} estimate the fractional derivative of bounded functions on the whole space $\R^n$. We also provide some estimates of the energy with respect to the $L^{\infty}$-norm of the functions and their derivatives. The case of radial symmetric functions is analyzed in Lemma \ref{lip_lem-2}.
\vspace{1mm}

\begin{lemma}\label{lem_inout}
Let~$n\ge 1$ and~$R\ge1$. Then,
\begin{equation}\label{div}
\text{if} \ s \in (0,\, 1/2), \ \ \ \int_{B_R}\int_{B_{2R}\setminus B_R}
\frac{dx\,dy}{|x-y|^{n+2s}}
\le \frac{3\,\omega_{n-1}^2\,R^{n-2s}}{2s\,(1-2s)}.
\end{equation}
\begin{equation}\label{div2}
\text{If} \ s=1/2, \ \ \ 
\int_{B_R}\int_{\CC B_{R+1}}
\frac{dx\,dy}{|x-y|^{n+2s}}\, \leq \, \omega_{n-1}^2\,R^{n-1}\,\left( 2^n+\log(3R)\right).
\end{equation}
\begin{equation}\label{div3}
\text{If} \ s \in (1/2,\,1), \ \ \
\int_{B_R}\int_{\CC B_{R+1}}
\frac{dx\,dy}{|x-y|^{n+2s}}\, \leq \, 
\frac{\omega_{n-1}^2\, R^{n-1}}{2s-1}.
\end{equation}
\end{lemma}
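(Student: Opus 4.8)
The plan is to treat the three cases separately, in each case reducing the double integral over the ball(s) to a one-variable integral by first integrating in $y$ over a suitable annulus or exterior region, using that the kernel $|x-y|^{-(n+2s)}$ depends only on $|x-y|$, and then integrating the resulting radial quantity in $x$ over $B_R$. The recurring elementary ingredient is the spherical-shell estimate: for a fixed $x$ and radii $0<a<b$,
\[
\int_{\{a\le |x-y|\le b\}}\frac{dy}{|x-y|^{n+2s}}
= \omega_{n-1}\int_a^b \rho^{n-1}\,\rho^{-(n+2s)}\,d\rho
= \omega_{n-1}\int_a^b \rho^{-(1+2s)}\,d\rho,
\]
which is $\frac{\omega_{n-1}}{2s}(a^{-2s}-b^{-2s})$, and similarly with $b=+\infty$ when $s>0$ the outer integral $\int_a^\infty \rho^{-(1+2s)}d\rho = \frac{a^{-2s}}{2s}$; for $s=1/2$ the integral produces a logarithm, $\int_a^b \rho^{-2}\,d\rho = a^{-1}-b^{-1}$ and $\int_a^b\rho^{-1}\,d\rho = \log(b/a)$. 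In all three cases $|B_R| \le \omega_{n-1} R^n$ will be used for the remaining integration in $x$.

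First I would do case \eqref{div3}, $s\in(1/2,1)$, which is the cleanest. For $x\in B_R$ and $y\in \CC B_{R+1}$ we have $|x-y|\ge |y|-|x|> 1$, and more usefully $|x-y| \ge |y| - R$; integrating in $y$ over $\CC B_{R+1}$ by splitting into shells $\{R+1\le |y|\}$ and using $|x-y|\ge |y|-R \ge 1$, we bound $\int_{\CC B_{R+1}} |x-y|^{-(n+2s)}\,dy \le \int_{\{|x-y|\ge 1\}} |x-y|^{-(n+2s)}\,dy = \frac{\omega_{n-1}}{2s}$; actually to land exactly on $\frac{\omega_{n-1}}{2s-1}$ one integrates the radial variable $t=|y|$ directly: $\int_{\CC B_{R+1}} |x-y|^{-(n+2s)}\,dy \le \omega_{n-1}\int_{R+1}^\infty (t-R)^{-(1+2s)}\,dt \cdot (\text{a factor from }\omega_{n-1}\text{ again for the }y\text{-sphere})$, giving $\le \frac{\omega_{n-1}^2}{2s-1}$ after noticing $\int_1^\infty \sigma^{-(1+2s)}\,d\sigma = \frac{1}{2s}$ — here one must be slightly careful to extract the stated constant $\frac{1}{2s-1}$; the cleanest route is the crude bound $|x-y|^{-(n+2s)} \le |x-y|^{-(n-1)}|x-y|^{-(1+2s)} \le |x-y|^{-(1+2s)}$ is \emph{not} what is wanted, so instead I would integrate over $\CC B_{R+1}$ in polar coordinates centered at $x$, getting $\omega_{n-1}\int_{1}^{\infty}\rho^{-(1+2s)}\,d\rho = \frac{\omega_{n-1}}{2s}$, then multiply by $|B_R|\le \omega_{n-1}R^n$; to reconcile with the exponent $R^{n-1}$ and the constant $\frac{1}{2s-1}$ in the statement one must use the sharper fact that only a boundary layer of $B_R$ of thickness $O(1)$ contributes the dominant term — concretely, split $B_R$ into $B_{R-1}$ and $B_R\setminus B_{R-1}$, on $B_{R-1}$ the inner integral is $\le \frac{\omega_{n-1}}{2s}2^{-2s}$ which is summable and on the shell one uses $|B_R\setminus B_{R-1}|\le C R^{n-1}$. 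This boundary-layer bookkeeping, needed to get $R^{n-1}$ rather than $R^n$, is the one genuinely delicate point and I would carry it out by the substitution $t=|y|-R$ and Fubini.

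Next, case \eqref{div2}, $s=1/2$: the argument is identical in structure but the radial integral $\int \rho^{-1}\,d\rho$ contributes $\log$; integrating over $\CC B_{R+1}$ in shells from $\rho=1$ out to $\rho\sim R$ gives the $\log(3R)$ term, while the part with $\rho \gtrsim R$ contributes the $O(1)$ constant that I would absorb into the $2^n$. Finally, case \eqref{div} ($s\in(0,1/2)$): now the integral over the annulus $B_{2R}\setminus B_R$ is genuinely $R$-dependent and divergent-at-infinity; for $x\in B_R$ and $y\in B_{2R}\setminus B_R$ one has $|x-y|\le 3R$, and integrating $\int_{B_{2R}\setminus B_R} |x-y|^{-(n+2s)}\,dy$ over shells up to radius $3R$ gives $\le \frac{\omega_{n-1}}{2s}(c^{-2s} - (3R)^{-2s})$ for the part near $x$ plus, using $\int_0^{3R}\rho^{-(1+2s)}\rho^{n-1}\cdot\rho^{-(n-1)}$ more carefully, a term of order $R^{-2s}\cdot$(shell volume). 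The clean way: $\int_{B_{2R}\setminus B_R}|x-y|^{-(n+2s)}\,dy \le \int_{\{|x-y|\le 3R\}}|x-y|^{-(n+2s)}\,dy$ diverges as the lower cutoff $\to 0$, so instead I exploit that for $x\in B_R$, $y\notin B_R$ forces $|x-y|$ bounded below only when $x$ is deep inside; the honest estimate integrates first over $x\in B_R$ for fixed $y$: $\int_{B_R}|x-y|^{-(n+2s)}\,dx \le \omega_{n-1}\int_0^{2R}\rho^{n-1-n-2s}d\rho$ — again divergent — so the correct order of integration and the correct bound is to integrate the distance-from-the-sphere variable. I would write $I = \int_{B_R}\int_{B_{2R}\setminus B_R}|x-y|^{-(n+2s)}\,dy\,dx$, bound $|x-y|^{-(n+2s)} \le |x-y|^{-(1+2s)} \cdot |x-y|^{-(n-1)}$ only after restricting to $|x-y|\ge $ something — cleaner: use $|x-y| \ge \bigl||x|-|y|\bigr|$ is false pointwise in direction; the standard trick (as in \cite{sv1,DPV11}) is $\int_{\CC B_R}|x-y|^{-(n+2s)}\,dy \le \frac{\omega_{n-1}}{2s}(R-|x|)^{-2s}$ for $|x|<R$, hence $I \le \int_{B_R}\frac{\omega_{n-1}}{2s}(R-|x|)^{-2s}\,dx = \frac{\omega_{n-1}^2}{2s}\int_0^R (R-r)^{-2s} r^{n-1}\,dr \le \frac{\omega_{n-1}^2}{2s}R^{n-1}\int_0^R (R-r)^{-2s}\,dr = \frac{\omega_{n-1}^2 R^{n-1}}{2s}\cdot\frac{R^{1-2s}}{1-2s} = \frac{\omega_{n-1}^2 R^{n-2s}}{2s(1-2s)}$, and the extra factor $3$ in the stated bound is slack I can use freely (e.g. to replace $\CC B_R$ by $B_{2R}\setminus B_R$ costs nothing, and to bound $r^{n-1}\le R^{n-1}$). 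The main obstacle throughout is getting the \emph{correct powers of $R$} (boundary-layer thickness $O(1)$ for $s\ge 1/2$, and the $(R-|x|)^{-2s}$ exterior-integral bound for $s<1/2$); once the right elementary lemma — $\int_{\CC B_R}|x-y|^{-(n+2s)}\,dy \le C_{n,s}(R-|x|)^{-2s}$ — is invoked, the rest is the routine radial integration sketched above.
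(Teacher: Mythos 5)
Your core idea is sound and follows a genuinely different route from the paper's. You reduce everything to the single elementary estimate
\[
\int_{\CC B_\rho}\frac{dy}{|x-y|^{n+2s}}
\;\le\;\omega_{n-1}\int_{d(x)}^{\infty}\frac{dt}{t^{1+2s}}
\;=\;\frac{\omega_{n-1}}{2s}\,d(x)^{-2s},
\qquad d(x):=\rho-|x|,
\]
valid because every $y\in\CC B_\rho$ satisfies $|x-y|\ge d(x)$, and then convert the remaining $x$-integral into a one-dimensional radial integral. You carry this out cleanly for~\eqref{div} (and actually obtain the sharper constant $\frac{\omega_{n-1}^2}{2s(1-2s)}$, so the factor~$3$ in the statement is free slack). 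The paper instead uses an integration-by-parts device: from $\frac{2s}{|x-y|^{n+2s}}=-\,{\rm div}_x\big(\frac{x-y}{|x-y|^{n+2s}}\big)$ and the divergence theorem, the $x$-integral over the unit ball is converted to a surface integral over $\partial B_1$ with the one-degree-less-singular kernel $|x-y|^{1-n-2s}$; the $y$-integral of this kernel is then elementary, and the statement follows by scaling to $B_R$. Both arguments are correct; yours is arguably the more elementary, bypassing the surface reduction entirely.

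The one point that needs repair is your handling of~\eqref{div2} and~\eqref{div3}. The two-piece split $B_R=B_{R-1}\cup(B_R\setminus B_{R-1})$ that you sketch cannot by itself yield $R^{n-1}$: on $B_{R-1}$ the inner integral is merely bounded by a constant, while $|B_{R-1}|\sim R^{n}$, so that piece alone is already of order $R^{n}$. The fix is simply not to throw away the decay in $d(x)=(R+1)-|x|$, i.e.\ to repeat verbatim the computation you did for~\eqref{div}. Writing $r=|x|$,
\[
\int_{B_R}\int_{\CC B_{R+1}}\frac{dx\,dy}{|x-y|^{n+2s}}
\;\le\;\frac{\omega_{n-1}^2}{2s}\int_0^R r^{n-1}\,(R+1-r)^{-2s}\,dr
\;\le\;\frac{\omega_{n-1}^2\,R^{n-1}}{2s}\int_1^{R+1}t^{-2s}\,dt,
\]
and the last integral equals $\frac{1-(R+1)^{1-2s}}{2s-1}\le\frac{1}{2s-1}$ when $s>\tfrac12$, which gives~\eqref{div3} (even with a spare factor $\tfrac1{2s}$), and equals $\log(R+1)\le\log(3R)$ when $s=\tfrac12$, which gives~\eqref{div2} (again with room to spare, so the $2^n$ term is unnecessary here). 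In other words the ``key elementary lemma'' you isolate at the end of your proposal already handles all three cases uniformly once you keep the $(\rho-|x|)^{-2s}$ decay; the boundary-layer detour should be deleted.
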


\begin{proof} For any fixed~$y\in\R^n$,
\begin{eqnarray}\label{vidi}
2s\!\int_{B_1}\frac{dx}{|x-y|^{n+2s}}
 & = & -\int_{B_1}\,{\rm div}\,\left(\frac{x-y}{|x-y|^{n+2s}}
\right)\,dx \nonumber
\\ \nonumber
\\ & = &  -\int_{\partial B_1}
\frac{x-y}{|x-y|^{n+2s}}\cdot x
\,d{\mathcal{H}}^{n-1}(x) \nonumber
\\ \nonumber
\\ &\le & 
\int_{\partial B_1}
|x-y|^{1-n-2s}
\,d{\mathcal{H}}^{n-1}(x).\end{eqnarray}
Accordingly, if~$s\in(0,\,1/2)$,
\begin{eqnarray*}
2s\!\int_{B_1}\!\int_{B_2\setminus B_1}
\frac{dx\,dy}{|x-y|^{n+2s}}  & \le & 
\int_{\partial B_1}\left[\int_{B_2\setminus B_1}
|x-y|^{1-n-2s}\,dy\right]
\,d{\mathcal{H}}^{n-1}(x)
\\
\\ & \le &  \int_{\partial B_1}\left[\int_{B_3}
|\zeta|^{1-n-2s}\,d\zeta\right]
\,d{\mathcal{H}}^{n-1}(x) \\
\\
&= &  \frac{3^{1-2s}\,\omega_{n-1}^2}{1-2s},
\end{eqnarray*}
which is finite by our assumption on~$s$, and so,
by changing
variable~$\tilde x:=x/R$ and~$\tilde y:=y/R$,
\begin{eqnarray*}
2s\!\int_{B_R}\int_{B_{2R}\setminus B_R}
\frac{dx\,dy}{|x-y|^{n+2s}}  & = &  R^{n-2s}\,
\int_{B_1}\int_{B_{2}\setminus B_1}
\frac{d\tilde x\,d\tilde y}{|\tilde x-\tilde y|^{n+2s}}
\\
\\& \le & 
\frac{3^{1-2s}\,\omega_{n-1}^2\,R^{n-2s}}{1-2s},
\end{eqnarray*}
proving~\eqref{div}.

\vspace{2mm}

On the other hand, if~$s \in (1/2,\,1)$, we set~$\eps:=1/R$,
and we use~\eqref{vidi}
to conclude that
\begin{eqnarray*}
 \int_{B_1}\!\int_{\CC B_{1+\eps}}
\frac{dx\,dy}{|x-y|^{n+2s}}
 & \le &  \int_{\partial B_1}\left[
\int_{\CC B_{1+\eps}} 
|x-y|^{1-n-2s}\,dy\right]
d{\mathcal{H}}^{n-1}(x)
\\
\\ & \le & 
\int_{\partial B_1}\left[
\int_{\CC B_{\eps}} 
|\zeta|^{1-n-2s}\,d\zeta\right]
d{\mathcal{H}}^{n-1}(x)
\, \le\, \frac{\omega_{n-1}^2\varepsilon^{1-2s}}{2s-1},
\end{eqnarray*}
hence~\eqref{div3} follows from scaling.

\vspace{2mm}

Finally, when~$s=1/2$, we use~\eqref{vidi} in the following
way:
\begin{eqnarray*}
\int_{B_1}\int_{\CC B_{1+\eps}}
\frac{dx\,dy}{|x-y|^{n+2s}}
 & \le & 
\int_{B_1}\int_{B_2\setminus B_{1+\eps}}
\frac{dx\,dy}{|x-y|^{n+1}}
+
\int_{B_1}\int_{\CC B_{2}}
\frac{dx\,dy}{(|y|/2)^{n+1}}
\\ 
\\ & \le &  \int_{\partial B_1}\left[
\int_{B_2\setminus B_{1+\eps}} 
|x-y|^{-n}\,dy\right]
d{\mathcal{H}}^{n-1}(x)+2^{n}\omega_{n-1}^2
\\ 
\\&\le & 
\int_{\partial B_1}\left[
\int_{B_3\setminus B_{\eps}} 
|\zeta|^{-n}\,d\zeta\right]
d{\mathcal{H}}^{n-1}(x)+2^{n}\omega_{n-1}^2
\\
\\ & = &  \omega_{n-1}^2\left( 2^n+\log\frac3{\eps}\right),
\end{eqnarray*}
hence~\eqref{div2} follows again from scaling.\qed
\end{proof}
\vspace{1mm}

Similarly as in previous Lemma~\ref{lem_inout}, one can estimate the kernel interaction of smooth functions as follows. 

\begin{lemma}\label{lip_lem-0}
Let~$n\ge1$ and 
$x\in\R^n$, $\rho>0$ and~$\psi\in L^\infty(\R^n)\cap
W^{1,\infty}(B_\rho(x))$. Then, \vspace{.5mm}
\begin{equation}\label{909-0}\int_{\R^n}
\frac{
|\psi(x)-\psi(y)|^2
}{
|x-y|^{n+2s}}\,dy\le
\frac{4\,\omega_{n-1}}{(1-s)\,s}\,\Big[
\|\nabla\psi\|^2_{L^\infty (B_\rho(x))} \rho^{2(1-s)}+
\|\psi\|^2_{L^\infty (\R^n)}\rho^{-2s}
\Big].\end{equation}
\end{lemma}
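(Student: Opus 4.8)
The plan is to estimate the integral $\int_{\R^n}|\psi(x)-\psi(y)|^2|x-y|^{-(n+2s)}\,dy$ by splitting the domain of integration into the ball $B_\rho(x)$, where we exploit the Lipschitz bound on $\psi$, and its complement $\CC B_\rho(x)$, where we only use the $L^\infty$ bound. First I would write $\int_{\R^n}=\int_{B_\rho(x)}+\int_{\CC B_\rho(x)}$ and treat each piece separately, passing to polar coordinates centered at $x$ so that $|x-y|=r$ and the surface measure of the unit sphere contributes a factor $\omega_{n-1}$ (the notation used throughout the paper).

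For the near piece, on $B_\rho(x)$ we have $|\psi(x)-\psi(y)|\le \|\nabla\psi\|_{L^\infty(B_\rho(x))}\,|x-y|$, so
\begin{equation*}
\int_{B_\rho(x)}\frac{|\psi(x)-\psi(y)|^2}{|x-y|^{n+2s}}\,dy
\le \|\nabla\psi\|^2_{L^\infty(B_\rho(x))}\int_{B_\rho(x)}\frac{dy}{|x-y|^{n+2s-2}}
=\omega_{n-1}\,\|\nabla\psi\|^2_{L^\infty(B_\rho(x))}\int_0^\rho r^{1-2s}\,dr,
\end{equation*}
and since $\int_0^\rho r^{1-2s}\,dr=\rho^{2(1-s)}/(2(1-s))$, this piece is bounded by $\frac{\omega_{n-1}}{2(1-s)}\|\nabla\psi\|^2_{L^\infty(B_\rho(x))}\rho^{2(1-s)}$. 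For the far piece, on $\CC B_\rho(x)$ we use $|\psi(x)-\psi(y)|\le 2\|\psi\|_{L^\infty(\R^n)}$, hence
\begin{equation*}
\int_{\CC B_\rho(x)}\frac{|\psi(x)-\psi(y)|^2}{|x-y|^{n+2s}}\,dy
\le 4\|\psi\|^2_{L^\infty(\R^n)}\,\omega_{n-1}\int_\rho^{+\infty} r^{-1-2s}\,dr
=\frac{2\,\omega_{n-1}}{s}\,\|\psi\|^2_{L^\infty(\R^n)}\,\rho^{-2s}.
\end{equation*}

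Adding the two contributions and bounding both prefactors $\frac{\omega_{n-1}}{2(1-s)}$ and $\frac{2\omega_{n-1}}{s}$ crudely by the common constant $\frac{4\omega_{n-1}}{(1-s)s}$ (which dominates each of them for $s\in(0,1)$) yields exactly the claimed inequality \eqref{909-0}. There is no real obstacle here; the only minor point to be careful about is the convergence of the radial integrals, which holds precisely because $1-2s>-1$ (so the near integral converges at $r=0$) and $-1-2s<-1$ (so the far integral converges at $r=+\infty$), both valid for every $s\in(0,1)$, and because $\psi\in L^\infty(\R^n)$ guarantees the integrand is controlled at infinity. I would also note that measurability of $\psi$ and the a.e. differentiability implicit in $\psi\in W^{1,\infty}(B_\rho(x))$ are all that is needed for the pointwise estimates above to be legitimate inside the integrals.
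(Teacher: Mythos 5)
Your proof is correct and follows essentially the same argument as the paper: split the integral at radius $\rho$, use the Lipschitz bound on the near piece and the $L^\infty$ bound on the far piece, then pass to polar coordinates and absorb both prefactors into the common constant $\frac{4\omega_{n-1}}{(1-s)s}$. The paper's version is just slightly more terse in the final bookkeeping step.
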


\begin{proof} We bound the left hand side of \eqref{909-0}
by
\begin{eqnarray*} 
  & & \int_{ B_\rho(x)}
\frac{
|\psi(x)-\psi(y)|^2
}{
|x-y|^{n+2s}}\,dy \! + \!
\int_{\CC B_\rho(x)}
\frac{
|\psi(x)-\psi(y)|^2
}{
|x-y|^{n+2s}}\,dy
\\
\\  &  & \qquad \qquad \qquad \qquad \le
\int_{ B_\rho(x)}
\frac{ \|\nabla \psi\|^2_{L^\infty(B_\rho(x))}
}{
|x-y|^{n+2s-2}}\,dy +
\int_{\CC B_\rho(x)}
\frac{4\,\|\psi\|_{L^\infty(\R^n)}^2
}{
|x-y|^{n+2s}}\,dy
\\
\\ && \qquad \qquad \qquad \qquad =
\int_{ B_\rho} 
\frac{ \|\nabla \psi\|^2_{L^\infty(B_\rho(x))}
}{
|\zeta|^{n+2s-2}}\,d\zeta +
\int_{\CC B_\rho}
\frac{4\,\|\psi\|_{L^\infty(\R^n)}^2
}{
|\zeta|^{n+2s}}\,d\zeta
\\
\\ & & \qquad \qquad \qquad \qquad \leq \omega_{n-1} \left(\frac{ \|\nabla \psi\|^2_{L^\infty(B_\rho(x))}
\,\rho^{2(1-s)}}{2(1-s)}+\frac{4 
\,\|\psi\|^2_{L^\infty(\R^n)}\rho^{-2s}}{s}
\right)
\end{eqnarray*}
and this easily implies \eqref{909-0}.\qed
\end{proof}

\vspace{1mm}

\begin{lemma}\label{lip_lem-1}
Let~$n\ge1$.  
Let~$x\in\R^n$, $\rho>0$ and~$\psi\in 
L^\infty(\R^n)$. 

Suppose that there exists~$\Xi \in\R^n$ and~$K\in\R$
\begin{equation}\label{HY1}
\psi(y)-\psi(x)-\Xi \cdot(y-x)\le K |x-y|^2,
\end{equation}
for any~$y\in B_\rho(x)$. Then,
\begin{equation}\label{HTH1}
\int_{\R^n}
\frac{
\psi(y)-\psi(x)
}{
|x-y|^{n+2s}}\,dy \le
\omega_{n-1}\,\Big(\frac{
K \rho^{2(1-s)}}{2(1-s)}+\frac{
\|\psi\|_{L^\infty (\R^n)}\rho^{-2s}}{s}
\Big).\end{equation}
Analogously, if we replace~\eqref{HY1} with
the assumption that there exists~$\tilde\Xi\in\R^n$
and~$\tilde{K}\in\R$ such 
that
\begin{equation}\label{HY2}
\psi(y)-\psi(x)-\tilde\Xi \cdot(y-x)\ge -\tilde{K} |x-y|^2,
\end{equation}
for any~$y\in B_\rho(x)$, we obtain that
\begin{equation}\label{HTH2}
\int_{\R^n}
\frac{
\psi(x)-\psi(y)
}{
|x-y|^{n+2s}}\,dy \le {\omega_{n-1}}\, \Big(
\frac{\tilde{K} \rho^{2(1-s)}}{2(1-s)}+\frac{
\|\psi\|_{L^\infty (\R^n)}\rho^{-2s}}{s}
\Big).\end{equation}
In particular, if~$\psi\in L^\infty(\R^n)\cap
W^{2,\infty}(B_\rho(x))$ we have that
\begin{eqnarray}\label{909-1}
&&\left|\int_{\R^n}
\frac{
\psi(y)-\psi(x)
}{
|x-y|^{n+2s}}\,dy\right| \nonumber\\
&&\quad\quad \quad \le
\frac{\omega_{n-1}}{(1-s)\,s}\Big(
\|D^2\psi\|_{L^\infty (B_\rho(x))} \rho^{2(1-s)}+
\|\psi\|_{L^\infty (\R^n)}\rho^{-2s}
\Big).
\end{eqnarray}
\end{lemma}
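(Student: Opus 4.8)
The plan is to split each integral over $\R^n$ into the contribution coming from the small ball $B_\rho(x)$ and the one coming from the far region $\CC B_\rho(x)$, and to treat the two pieces by entirely different devices. On $\CC B_\rho(x)$ one simply uses $|\psi(y)-\psi(x)|\le 2\|\psi\|_{L^\infty(\R^n)}$ and computes the kernel integral in polar coordinates:
$$
\int_{\CC B_\rho(x)}\frac{2\|\psi\|_{L^\infty(\R^n)}}{|x-y|^{n+2s}}\,dy
=2\,\omega_{n-1}\,\|\psi\|_{L^\infty(\R^n)}\int_\rho^{+\infty}r^{-1-2s}\,dr
=\frac{\omega_{n-1}\,\|\psi\|_{L^\infty(\R^n)}\,\rho^{-2s}}{s},
$$
which is exactly the second term on the right-hand side of both \eqref{HTH1} and \eqref{HTH2}. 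On $B_\rho(x)$ I would invoke the hypothesis \eqref{HY1}: for $y\in B_\rho(x)$ we have $\psi(y)-\psi(x)\le\Xi\cdot(y-x)+K|x-y|^2$, and the linear term contributes nothing. More precisely, writing the integral over $B_\rho(x)$ in the principal value sense as the limit of the integrals over $B_\rho(x)\setminus B_\varepsilon(x)$, the annulus $B_\rho(x)\setminus B_\varepsilon(x)$ is invariant under the reflection $y\mapsto 2x-y$ while $\tfrac{\Xi\cdot(y-x)}{|x-y|^{n+2s}}$ is odd under it, so that term vanishes; letting $\varepsilon\to0$, what survives is
$$
\int_{B_\rho(x)}\frac{K\,|x-y|^2}{|x-y|^{n+2s}}\,dy
=K\,\omega_{n-1}\int_0^\rho r^{1-2s}\,dr
=\frac{K\,\omega_{n-1}\,\rho^{2(1-s)}}{2(1-s)},
$$
which is the first term in \eqref{HTH1}. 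Adding the two contributions gives \eqref{HTH1}, and the proof of \eqref{HTH2} is identical after replacing $\psi(y)-\psi(x)$ by $\psi(x)-\psi(y)$ and using \eqref{HY2} in place of \eqref{HY1}.

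For \eqref{909-1} I would just feed the second-order Taylor expansion into the two estimates already obtained. Since $\psi\in W^{2,\infty}(B_\rho(x))$, for every $y\in B_\rho(x)$
$$
\big|\psi(y)-\psi(x)-\nabla\psi(x)\cdot(y-x)\big|\le\tfrac12\,\|D^2\psi\|_{L^\infty(B_\rho(x))}\,|x-y|^2 ,
$$
so both \eqref{HY1} and \eqref{HY2} hold with $\Xi=\tilde\Xi=\nabla\psi(x)$ and $K=\tilde K=\tfrac12\|D^2\psi\|_{L^\infty(B_\rho(x))}$. Then \eqref{HTH1} bounds the integral from above and \eqref{HTH2} bounds its opposite from above, hence the integral from below; in both directions the bound is $\omega_{n-1}\big(\tfrac14(1-s)^{-1}\|D^2\psi\|_{L^\infty(B_\rho(x))}\rho^{2(1-s)}+s^{-1}\|\psi\|_{L^\infty(\R^n)}\rho^{-2s}\big)$, and since $s\in(0,1)$ one has $\tfrac14(1-s)^{-1}\le\big((1-s)s\big)^{-1}$ and $s^{-1}\le\big((1-s)s\big)^{-1}$, which yields \eqref{909-1}.

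I do not expect a serious obstacle here. The only delicate point is the legitimacy of discarding the linear term $\Xi\cdot(y-x)$: when $2s\ge1$ the integrand $\tfrac{\Xi\cdot(y-x)}{|x-y|^{n+2s}}$ is not absolutely integrable near $x$, so the cancellation must be read through the $\varepsilon$-truncation and the principal value, whereas for $2s<1$ everything is absolutely convergent and the symmetry argument is immediate. One should also note that on $B_\rho(x)$ the hypotheses only provide one-sided bounds on $\psi(y)-\psi(x)$, so the comparison has to be carried out under the same $\varepsilon$-truncation and then passed to the limit; this is harmless, since the dominating function $K\,|x-y|^{2-n-2s}$ is integrable on $B_\rho(x)$.
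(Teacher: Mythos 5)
Your proof is correct and follows essentially the same route as the paper's: decompose over $B_\rho(x)$ and its complement, annihilate the linear term by symmetry, bound the quadratic remainder on the ball and the $L^\infty$ tail outside, then specialize to $\Xi=\nabla\psi(x)$ for \eqref{909-1}. The paper's proof writes this in one chain of inequalities with $K=\|D^2\psi\|_{L^\infty(B_\rho(x))}$ rather than your sharper $\tfrac12\|D^2\psi\|_{L^\infty(B_\rho(x))}$, and then absorbs the harmless factors of $2$ and $\tfrac1{1-s}$ exactly as you do; your explicit mention of the $\varepsilon$-truncation to justify the odd-symmetry cancellation when $2s\ge1$ is a small but welcome clarification that the paper leaves implicit under its standing principal-value convention.
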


\begin{proof} 
We prove~\eqref{HTH1}
under assumption~\eqref{HY1}, since the proof of~\eqref{HTH2}
under assumption~\eqref{HY2} is the same, and then~\eqref{909-1}
follows from~\eqref{HY1} and~\eqref{HY2}
by choosing~$\Xi=\tilde\Xi=\nabla\psi(x)$ 
and~$K=\tilde{K}:=\|D^2\psi\|_{L^\infty (B_\rho(x))}$.
The proof below is similar to the one of
Lemma~\ref{lip_lem-0}, but we give the details for the
facility of the reader.
\vspace{1mm}

Notice that, by symmetry,
\begin{equation*} \int_{ B_\rho(x)}
\frac{
\Xi\cdot(x-y)
}{
|x-y|^{n+2s}}\,dy =0.\end{equation*}

Consequently,
we bound the left hand side of \eqref{HTH1}
by
\begin{eqnarray*} 
&& \int_{ B_\rho(x)}
\frac{
\psi(y)-\psi(x)+\Xi \cdot(x-y)
}{
|x-y|^{n+2s}}\,dy+
\int_{\CC B_\rho(x)}
\frac{
|\psi(x)-\psi(y)|
}{
|x-y|^{n+2s}}\,dy
\\
\\ && \qquad \qquad \qquad  \qquad \ \ \le
\int_{ B_\rho(x)}
\frac{ K
}{
|x-y|^{n+2s-2}}\,dy +
\int_{\CC B_\rho(x)}
\frac{2\|\psi\|_{L^\infty(\R^n)}
}{
|x-y|^{n+2s}}\,dy
\\
\\ && \qquad \qquad \qquad \qquad \ \  =
\int_{ B_\rho} 
\frac{ K
}{
|\zeta|^{n+2s-2}}\,d\zeta +
\int_{\CC B_\rho}
\frac{2\|\psi\|_{L^\infty(\R^n)}
}{
|\zeta|^{n+2s}}\,dy
\\
\\ && \qquad \qquad \qquad \qquad  \ \ = \omega_{n-1} \Big[\frac{K
\,\rho^{2(1-s)}}{2(1-s)}+\frac{\|\psi\|_{L^\infty(\R^n)}\rho^{-2s}}{s}
\Big],
\end{eqnarray*}
that is~\eqref{HTH1}.\qed
\end{proof}

\vspace{1mm}

\begin{lemma}\label{lip_lem-2} 
Let~$n\ge1$ 
and let~$x\in\R^n$. Let~$\psi\in
L^\infty(\R^n)$ be continuous, radial and radially non-decreasing, with
$$
 \sup_{\R^n}\psi=\max_{\R^n}\psi=M.
$$
Suppose that $\psi\in W^{2,\infty}(\{\psi<M\})$.
Then,
\begin{equation}\label{s778}
\int_{\R^n}
\frac{
\psi(x)-\psi(y)
}{
|x-y|^{n+2s}}\,dy\le
\frac{\omega_{n-1}}{(1-s)\,s} \Big(
\|D^2 \psi\|_{L^\infty(
\{\psi<M\})} 
+\|\psi\|_{L^\infty(\R^n)}
\Big).\end{equation}
\end{lemma}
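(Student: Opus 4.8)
The goal is to estimate $\int_{\R^n}\frac{\psi(x)-\psi(y)}{|x-y|^{n+2s}}\,dy$ from above for a radial, radially non-decreasing $\psi\in L^\infty(\R^n)$ that attains its maximum $M$ and is $W^{2,\infty}$ on the open set $\{\psi<M\}$. The natural idea is to split the integration domain into a ball $B_\rho(x)$ and its complement and to invoke Lemma~\ref{lip_lem-1} (specifically the estimate~\eqref{HTH1}), with the radius $\rho$ chosen \emph{depending on the point $x$}, namely $\rho:=\mathrm{dist}(x,\{\psi=M\})$ (with the convention $\rho=+\infty$, i.e.\ $\psi$ smooth everywhere, handled separately or by a limiting argument).

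\textbf{Key steps.} First I would dispose of the trivial case: if $\psi(x)=M$, then $\psi(x)-\psi(y)\ge 0$ is false in general — wait, actually $\psi(x)-\psi(y)\le 0$ there, so the left-hand side is $\le 0\le$ RHS and we are done; more usefully, if $x$ lies on the ``plateau'' $\{\psi=M\}$ the integrand is nonpositive. So assume $\psi(x)<M$ and set $\rho:=\mathrm{dist}(x,\{\psi=M\})>0$, so that $B_\rho(x)\subseteq\{\psi<M\}$ and hence $\psi\in W^{2,\infty}(B_\rho(x))$ with $\|D^2\psi\|_{L^\infty(B_\rho(x))}\le \|D^2\psi\|_{L^\infty(\{\psi<M\})}=:D$. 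Applying~\eqref{HTH1} (with $\Xi=\nabla\psi(x)$, $K=D$) gives
$$
\int_{\R^n}\frac{\psi(y)-\psi(x)}{|x-y|^{n+2s}}\,dy \ \le\ \omega_{n-1}\Big(\frac{D\,\rho^{2(1-s)}}{2(1-s)}+\frac{\|\psi\|_{L^\infty(\R^n)}\,\rho^{-2s}}{s}\Big).
$$
But this bounds the wrong sign, and worse it has an explicit $\rho$-dependence that blows up as $\rho\to\infty$ or $\rho\to 0$. The real point is to exploit radial monotonicity to beat the $\rho^{-2s}$ term: because $\psi$ is radial and radially non-decreasing and $x$ has a nearest plateau point at distance $\rho$, one shows $\psi(y)\ge\psi(x)$ for ``most'' of $\CC B_\rho(x)$ — precisely for all $y$ with $|y|\ge|x|$, which by the geometry of the plateau is exactly the half-space-like region on the far side — so the tail contribution $\int_{\CC B_\rho(x)}\frac{\psi(x)-\psi(y)}{|x-y|^{n+2s}}\,dy$ has a sign-definite majorant. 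The cleanest route is: split $\CC B_\rho(x)$ into $\{|y|\ge|x|\}$ (where $\psi(y)-\psi(x)\ge 0$, so these terms only help, i.e.\ make $\psi(x)-\psi(y)\le 0$) and $\{|y|<|x|\}\setminus B_\rho(x)$; on the latter one uses $0\le\psi(x)-\psi(y)\le 2\|\psi\|_\infty$ together with the fact that this region is contained in $\CC B_\rho(x)$ \emph{and} lies within distance $\sim |x|$ of $x$ — but here $\rho$ is comparable to or larger than the relevant scale, and a direct integration of $\frac{2\|\psi\|_\infty}{|x-y|^{n+2s}}$ over $\CC B_\rho(x)$ gives $\frac{2\omega_{n-1}\|\psi\|_\infty}{s}\rho^{-2s}$. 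To remove the $\rho$-dependence entirely one notes $\rho\ge 1$ may be assumed after rescaling, OR — and this is the trick actually used elsewhere in this paper — one absorbs the $\rho^{\pm}$ powers by observing they are bounded by $1$ for $\rho\ge 1$ and by controlling the $\rho<1$ case directly. I expect the argument will in fact take $\rho$ to be the distance to the plateau and then split into the cases $\rho\le 1$ and $\rho>1$: for $\rho>1$, both $\rho^{2(1-s)}$ cannot be bounded — so more care is needed, and I suspect the intended statement implicitly uses that when $\rho$ is large, the radial monotonicity forces the gradient at $x$ to be small, killing the $D\rho^{2(1-s)}$ growth.

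\textbf{Main obstacle.} The delicate point is the interplay between the scale $\rho$ at which $\psi$ is smooth near $x$ and the size of the tail and second-derivative contributions: a naive split via Lemma~\ref{lip_lem-1} produces a bound with $\rho^{2(1-s)}$ and $\rho^{-2s}$ terms that do not combine to a $\rho$-free constant unless one uses radial monotonicity to argue that the ``bad'' tail region $\{|y|<|x|\}\setminus B_\rho(x)$ is actually empty or harmless (because $\rho\le |x|$ forces $\psi(y)\le\psi(x)$ there too by monotonicity in $|y|$ — wait, $|y|<|x|$ gives $\psi(y)\le\psi(x)$, so $\psi(x)-\psi(y)\ge 0$, contributing positively; this region must be bounded by the tail integral $\frac{2\omega_{n-1}\|\psi\|_\infty}{s}\rho^{-2s}\le\frac{2\omega_{n-1}\|\psi\|_\infty}{s}$ once one additionally reduces to $\rho\ge 1$). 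I therefore anticipate that the honest proof first reduces, via the homogeneity of the problem under dilations $\psi\mapsto\psi(\lambda\,\cdot)$, to the case where $\{\psi<M\}$ contains a unit ball around $x$ — equivalently $\rho\ge 1$ — at which point $\rho^{2(1-s)}$ is the troublesome one and must instead be handled by noting that the smoothness is on \emph{all} of $\{\psi<M\}$ so one may take $\rho$ exactly equal to that distance and the $\rho^{2(1-s)}$ factor gets multiplied by $\|D^2\psi\|_{L^\infty(\{\psi<M\})}$ which already ``sees'' the whole region — and crucially that $\psi(y)-\psi(x)\le M-\psi(x)\le D\rho^2$ type bounds let one replace $\rho^{2(1-s)}$ growth by a bounded quantity. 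The cleanest honest statement to prove is: take $\rho=\mathrm{dist}(x,\{\psi=M\})$, apply~\eqref{HTH1}, and on $\CC B_\rho(x)$ use $\psi(x)-\psi(y)\le 2\|\psi\|_\infty$ only where $|y|<|x|$ and $\psi(x)-\psi(y)\le 0$ where $|y|\ge|x|$; the resulting bound $\frac{\omega_{n-1}}{(1-s)s}\big(\|D^2\psi\|_{L^\infty(\{\psi<M\})}\rho^{2(1-s)}+\|\psi\|_{L^\infty}\rho^{-2s}\big)$ with $\rho$ the plateau distance — and then the paper evidently intends the normalized version where this distance is $1$ (or the estimate is stated with an implicit scaling), yielding~\eqref{s778}. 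So the hard part is pinning down exactly which normalization makes~\eqref{s778} literally true as written; modulo that, every individual estimate is a direct consequence of Lemma~\ref{lip_lem-1} and the radial monotonicity of $\psi$.
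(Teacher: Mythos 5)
Your proposal contains a genuine error at the outset and then correctly identifies, but does not resolve, the central difficulty.

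\textbf{The false trivial case.} You claim that if $\psi(x)=M$ the integrand is nonpositive. This is backwards: $M$ is the \emph{maximum}, so $\psi(x)-\psi(y)=M-\psi(y)\ge 0$ for \emph{every} $y$, and the integrand is nonnegative. The case $\psi(x)=M$ (equivalently, $x$ lies on or outside the sphere bounding the plateau) is therefore not trivially dispatched; it still requires work, and with your definition of $\rho=\mathrm{dist}(x,\{\psi=M\})=0$ your split collapses entirely. So the case you discard is precisely a case your argument cannot reach.

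\textbf{The missing idea.} You correctly see that a naive split at a variable scale $\rho$ gives factors $\rho^{2(1-s)}$ and $\rho^{-2s}$ that refuse to become $\rho$-free, and you flail toward various normalization/rescaling fixes none of which work. The paper's resolution is different and considerably cleaner: it is a \emph{projection onto the plateau boundary}. Since $\psi$ is radial and attains its max on the complement of an open ball, $\{\psi<M\}=B_\kappa$ for some $\kappa>0$. For any $x$, set $z:=x$ if $x\in B_\kappa$ and $z:=\kappa x/|x|$ otherwise, so that $z\in\overline{B_\kappa}$ and (by radial constancy on $\CC B_\kappa$) $\psi(x)=\psi(z)$. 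Now restrict the integral to $B_{|z|}$ — this is exactly the set where $\psi(z)-\psi(y)\ge 0$, so the dropped region only helps. On $B_{|z|}$ one uses the geometric inequality $|x-y|\ge|z-y|$, which follows from the law of cosines and the fact that the angle at $z$ between $x-z$ (radially outward) and $y-z$ (pointing into the convex set $B_\kappa$) is obtuse. After this substitution the estimate is centered at a point $z$ inside $\overline{\{\psi<M\}}$, where the $W^{2,\infty}$ Taylor bound holds on all of $B_{|z|}\subset B_\kappa$; one then splits at the \emph{fixed} scale $1$, i.e.\ $B_{|z|}\cap B_1(z)$ versus $B_{|z|}\cap\CC B_1(z)$, obtaining $\omega_{n-1}\|D^2\psi\|/(1-s)$ from the near part and $\omega_{n-1}\|\psi\|_\infty/s$ from the tail, with no free parameter $\rho$ at all. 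This explains why the stated bound \eqref{s778} has no $\rho$: it is the $\rho=1$ instance of the Lemma~\ref{lip_lem-1}-type split, but applied after projecting $x$ to $z$, not before. Without that projection (and without the convexity inequality $|x-y|\ge|z-y|$), you cannot get the statement as written.
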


\begin{proof} 
By the radial symmetry of~$\psi$, we have that
$$ \{\psi<M
\} = B_\kappa$$
for some~$\kappa>0$.
Accordingly,
\begin{equation}\label{d77djkjkw-2}\begin{split}&{\mbox{ for any $z$, $y$ 
in the closure of $B_\kappa$,}}\\&\qquad
\psi(y)\ge \psi(z)+\nabla\psi(z)(y-z)-\|D^2 \psi\|_{L^\infty(
\{\psi<M
\})} 
(z-y)^2.
\end{split}\end{equation}\vspace{1mm}

Also, fixed any~$x\in\R^n$, we define
$$ z:=\left\{
\begin{matrix}
x & {\mbox{ if $x\in B_\kappa$,}}\\
\kappa x/|x| &{\mbox{ otherwise.}}\end{matrix}
\right.$$
Notice that~$|z|\leq \kappa$, that~$\psi (x)=\psi(z)$, that~$\psi(z)-\psi(y)\ge 0$
if and only if~$|z|\ge |y|$. Also, if $|x|>\kappa$ and~$\alpha$ is the angle between
the vector~$x-z$ and~$y-z$, the convexity of~$B_\kappa$
implies that
$$ \alpha\,\in\,\left[ \frac{\pi}2,\frac{3\pi}{2}\right]$$
and so~$\cos\alpha\le0$.
Hence,
\begin{eqnarray*} |x-y| &= & \sqrt{ |z-y|^2+|z-x|^2 -2|z-y|\,|z-x|\cos\alpha
} \\
&\ge &  \sqrt{ |z-y|^2+|z-x|^2 } \ \ge  \ |z-y|,
\end{eqnarray*}
if $|x|>\kappa$ (and, obviously, the estimate holds for $|x|\leq \kappa$ too, since is the case $z=x$).

Thus, we use
the above observations
to obtain
\begin{eqnarray*}
 \int_{\R^n} \frac{ 
\psi(x)-\psi(y)}{|x-y|^{n+2s}}\,dy & \le & \int_{B_{|z|}}
\frac{
\psi(z)-\psi(y)
}{
|x-y|^{n+2s}}\,dy
\ \le \ \int_{B_{|z|}}
\frac{
\psi(z)-\psi(y)
}{
|z-y|^{n+2s}}\,dy
\\
\\ \qquad &\le & 
\int_{B_{|z|}\cap B_1(z)}
\frac{
\psi(z)-\psi(y)
}{
|z-y|^{n+2s}}\,dy
\\
&& +\int_{B_{|z|}\cap \CC B_1(z)}
\frac{
\psi(z)-\psi(y)
}{
|z-y|^{n+2s}}\,dy
\\
\\  \qquad &\le &
\int_{B_1(z)}
\|D^2 \psi\|_{L^\infty(
\{\psi<M\})}
\,|z-y|^{2-n-2s}\,dy
\\ &&+\int_{\CC B_1(z)}
\frac{ 2\,\|\psi\|_{L^\infty(\R^n)}
}{
|z-y|^{n+2s}}\,dy
\\
\\ \qquad &\le &
{\omega_{n-1}}\left[
\frac{ \|D^2 \psi\|_{L^\infty(
\{\psi<M\})} }{1-s}+\frac{\|\psi\|_{L^\infty(\R^n)} }{s}
\right] ,
\end{eqnarray*}
which implies the desired result.\qed
\end{proof}

\vspace{2mm}

\noindent
{\bf Acknowledgments}. The authors would like to thank Luis Silvestre for his useful comments.

\vspace{2mm}

\vspace{2mm}

\end{document}